\def\ART{TF}
\theoremstyle{plain}
 \newtheorem{thm}{Theorem}[section]
 \newtheorem{theorem}{Theorem}[section]      
 \newtheorem{cor}[thm]{Corollary}
 \newtheorem{lemma}[thm]{Lemma}              
\theoremstyle{definition}
 \newtheorem{definition}[thm]{Definition}    
 \newtheorem{exmp}[thm]{Example}
\theoremstyle{remark}
 \newtheorem{remark}[thm]{Remark}            
\numberwithin{equation}{section}
\DeclareMathOperator{\Ad}{Ad}
\DeclareMathOperator{\mc}{mc}
\DeclareMathOperator{\md}{md}
\DeclareMathOperator{\me}{me}
\DeclareMathOperator{\Sp}{Sp}
\def\p{\partial}
\title{Middle convolutions of KZ-type equations and single-elimination tournaments}
\author{}
\date{}
\begin{document}
\if\ART
\maketitle
\else
\title[KZ-type equations and tournaments]
{Middle convolutions of KZ-type equations and single-elimination tournaments}

\author{Toshio Oshima}
\email{oshima@ms.u-tokyo.ac.jp}

\subjclass[2020]{Primary 34M35; Secondary 32S22, 58A17}
\keywords{middle convolution, Pfaffian system, KZ-type equation}
\fi

\if\ART

\vspace*{-1.3cm}
\centerline{By}

\medskip
\centerline{Toshio \sc Oshima}
\centerline{\small (Josai University, Japan)}

\bigskip
\begin{quote}
{\small%
{\bf Abstract.} \ 
We introduce an extension of the generalized Riemann scheme for Fuchsian ordinary differential equations in the case of KZ-type equations.
This extension describes the local structure of equations obtained by resolving the singularities of KZ-type equations.
We present the transformation of this extension under middle convolutions.
As a consequence, we derive the corresponding transformation of the eigenvalues and multiplicities of the residue matrices of KZ-type equations under middle convolutions.
We interpret the result in terms of the combinatorics of single-elimination tournaments.\\
\quad{\sl\phantom{W} Key Words and Phrases.} \ 
middle convolution, KZ-type equation,Pfaffian system\\
\quad{\sl\phantom{W} 2020 Mathematics Subject Classification Numbers.} \ 
58A17, 34M35
}
\end{quote}
\else
\begin{abstract}
We introduce an extension of the generalized Riemann scheme for Fuchsian ordinary differential equations in the case of KZ-type equations.
This extension describes the local structure of equations obtained by resolving the singularities of KZ-type equations.
We present the transformation of this extension under middle convolutions.
As a consequence, we derive the corresponding transformation of the eigenvalues and multiplicities of the residue matrices of KZ-type equations under middle convolutions.
We interpret the result in terms of the combinatorics of single-elimination tournaments.
\end{abstract}
\maketitle
\fi
\section{Introduction}
A Fuchsian system
\[
 \mathcal N : \frac{du}{dx}=\sum_{\nu=1}^{n-1}\frac{A_{0\nu}}{x-x_\nu}u
\]
for a column vector $u$ of $N$ unknown functions has singularities at 
$n$ points
\if\ART
$x_1,\dots,x_{n-1},\infty\in\mathbb C\cup\{\infty\}$.
\else
$x_1,\dots,$ $x_{n-1},\infty\in\mathbb C\cup\{\infty\}$.
\fi
The local structure of the solution to $\mathcal N$ in a complex 
neighborhood of the singular point 
$x_\nu$ is characterized by the conjugacy class of the residue matrix 
$A_{0\nu}\in M(N,\mathbb C)$, namely,
the eigenvalues and their multiplicities of the matrix. 
Here the residue matrix at $\infty$ equals $A_{0\infty}=-(A_{01}+\cdots+A_{0\infty})$.
The data of the eigenvalues and their multiplicities of the residue matrices
are called (generalized) Riemann scheme of $\mathcal N$. 
The equation $\mathcal N$ is called rigid if it is irreducible and 
determined by the Riemann scheme.
If it is not determined by the Riemann scheme, $\mathcal N$ has finite number of 
accessory parameters.
For example, there are 188 rigid Fuchsian systems with order 
at most 8 and they are listed in \cite[\S13.2.3]{Ow}.

Katz \cite{katz1996rigid} shows that any 
rigid Fuchsian system $\mathcal N$ can be transformed into 
the trivial equation $u'=0$ by a successive application of 
invertible transformations called additions and middle convolutions. 
Haraoka \cite{Ha} shows that if $\mathcal N$ 
is rigid, it is extended to Knizhnik-Zamolodchikov-type (KZ-type) equation 
\[
  \mathcal M : 
  \frac{\p u}{\p x_i}=\sum_{\substack{0\le\nu<n\\ \nu\ne i}}  \frac{A_{i\nu}}{x_i-x_\nu}u 
  \quad(i=0,\dots,n{-}1)
\]
by putting $x_0=x$ and regarding the singular points 
$x_1,\dots,x_{n-1}$ as new variables.
This is proved by extending the middle convolution to KZ-type equations.
The middle convolution of $\mathcal N$ is explicitly 
given by Dettweiler-Reiter \cite{DR} in terms of $A_{0\nu}$ according to the definition by 
\cite{katz1996rigid} and Haraoka's result is an extension of this. 
These transformations with permutations of the variables $x_0,\dots,x_{n-1}$ 
or middle convolutions with respect to other variables $x_i$ keep the KZ-type but do 
not keep the rigidity in general. In fact, even if the original KZ-type equation have 
a rigid variable, the resulting KZ-type equation may have no rigid variable.

There are many known or new hypergeometric functions with several variables which
are solutions to KZ-type equations obtained by these transformations from the trivial equation
(cf.~\cite{MO2024, Oi}). 
It is useful to examine the middle convolution for the analysis of these equations. 
In the case of Fuchsian ordinary differential equations, we note that the most results in 
\cite{Ow} are obtained by such analysis. 
Since the middle convolution is a microlocal transformation, 
we can expect to have the corresponding
transformation of the fundamental properties of residue matrices describing the singularities, 
such as the eigenvalues and their multiplicities. 
In the case of the ordinary differential equation, they are obtained by \cite{DR}.
In the case of KZ-type equations, this problem is more difficult because the singularities are 
complicated. 
It is solved by \cite[Remark~3]{Okz} in the case of KZ-type equations with $n=4$, 
which correspond to hypergeometric functions of two variables. In this paper we study it
for general $n$.

The local solution of $\mathcal M$ at the singularity $x_i=x_j$ is characterized by 
the eigenvalues and their multiplicities of the residue matrix $A_{ij}$.
The local solution at the singularity $\{x_0=x_1,\ x_2=x_3\}$ is characterized by the conjugacy 
class of the pair $(A_{01},A_{23})$. Here $A_{01}$ commutes with $A_{23}$ because of the
integrability condition of $\mathcal M$ and therefore the simultaneous eigenvalues of
the matrices $A_{01}$ and $A_{12}$ and their multiplicities are the data which determine the
conjugacy class.
At the singularity $\{x_0=x_1=x_2\}$ the eigenvalues and their multiplicities of 
the matrix $A_{012}=A_{01}+A_{02}+A_{12}$ are important data characterizing the local solution.
In general, for $I\subset L_n:=\{0,1,\dots,n{-}1\}$ with $|I|>1$,
we define a generalized residue matrix $A_I$ as the sum of $A_{ij}$ satisfying 
$\{i,j\}\subset L_n$. 
A maximal commuting family  $\mathcal I$ of residue matrices is defined by 
a maximal subset of $\{A_I\mid I\subset L_n,\ |I|>1\}$ whose elements commute with each other.
Moreover we put $\Sp\mathcal M$ the data of simultaneous eigenvalues and their multiplicities 
of maximal commuting families of residue matrices of $\mathcal M$ (Definition~\ref{def:ComResFamily}) and  call $\Sp\mathcal M$ spectra of $\mathcal M$.
There are $(2n{-}3)!!$ maximal commuting families of residue matrices of $\mathcal M$ and 
there are $n{-}1$ commuting residue matrices in a maximal commuting family 
(Corollary~\ref{cor:numI}), which corresponds to the residue matrices with respect to 
hypersurfaces defining a normal crossing singularity in a blowing up of a singular point 
(cf.~\S\ref{sec:blowup}).

The main result of this paper is to show that the additions and middle convolutions of $\mathcal M$
induce transformations of $\Sp\mathcal M$ and give an explicit description of them (Theorem \ref{thm:main}).
For example, we need $\Sp\mathcal M$ to get the eigenvalues and their multiplicities of 
the residue matrix $A_{12}$ of the equation obtained by applying additions and middle convolutions 
several times to an original KZ-type equation $\mathcal M$ (Remark~\ref{rem:spectra}). 

Blowing up the singularities of the equation satisfied by Appell's hypergeometric series $F_1$, 
there appear 15 normal crossing singularities. 
The simplest KZ-type equation $\mathcal M$ is the equation of rank 3 with $n=4$ satisfied by
$F_1$ and in this case $\Sp\mathcal M$ consists of $(2n-3)!!=15$ decompositions
into simultaneous eigenspaces of residue matrices,  and 
the simultaneous eigenvalues are free in multiplicities 
and coincide with the characteristic exponents at 15 singular points.
$\Sp\mathcal M$ is an extension of the Riemann scheme of $\mathcal N$ to 
the KZ-type equation $\mathcal M$ (cf.~Example~\ref{ex:ODE}). 

A KZ-type equation $\mathcal M$ with $n=3$ naturally corresponds to
a Fuchsian system $\mathcal N$ with three singular points 
(cf.~\S\ref{sec:accessory}).
Moreover, many hypergeometric functions with $n{-}2$ variables are solutions to KZ-type
equations $\mathcal M$ (\cite{MO2024},\ \cite{Okz}). 
When $n=4$, a residue matrix $A_I$ with $|I|=3$ is expressed by another
residue matrix $A_J$ with $|J|=2$ (cf.~Example~\ref{exmpl:45} (i)) and $\Sp\mathcal M$
can be interpreted to the simultaneous eigenspace decompositions of pairs of 
residue matrices at normal crossing singular points and  
the transformation of $\Sp\mathcal M$ under the middle convolution is described in 
\cite{Okz}. 
When $n>4$, the structure of $\Sp\mathcal M$ is more complicated but 
the combinatorics of single-elimination tournaments helps to understand 
the structure.

In \S\ref{sec:tournament} some known results of 
combinatorics related to single-elimination tournaments, such as the number of ways in 
tournament scenarios, are explained with a focus on the application to the structure
of residue matrices of KZ-type equations. In particular, we introduce maximal families 
of commuting subsets of a finite set which correspond maximal commuting residue matrices.
The numbers parametrized by $n$ in this section can be referred to the data base 
\cite{OEIS}.

In \S\ref{sec:KZ}, after reviewing the integrability condition of a KZ-type equation, 
we introduce $\Sp\mathcal M$.

In \S\ref{sec:blowup}, giving a local coordinate system, 
we define a resolution of the singular point of a KZ-equation and show that
the maximal commuting family of residue matrices equals the set of 
residue matrices corresponding to the normal crossing divisors which define
a singular point in the blowing up.

In \S\ref{sec:midKZ}, we examine the transformation of residue matrices 
under the middle convolution of a KZ-type equation and give our main result 
Theorem~\ref{thm:main} in this paper.  To state the theorem, 
we define transformations of maximal families of commuting subsets of a finite set which
correspond transformations of tournaments.

Theorem~\ref{thm:main} can be applied to analyze hypergeometric functions with several variables.
Examples of the application with $n=4$ are given in \cite[\S8]{Okz}, \cite[\S7]{Oi}, 
\cite[\S5]{MO2024}.
A solution to a rigid Fuchsian system $\mathcal N$ with more than 3 
singular points can be deeply analyzed through the KZ-equation obtained from $\mathcal N$
by the extension of variables.
For example, the ordinary differential equation satisfied by Jordan-Pochhammer's hypergeometric 
function is extended to a KZ-type equation satisfied by Appell's $F_1$ or Lauricella's $F_D$. 
Moreover the theorem can be applied to KZ-type equations with irregular singularities 
through their versal unfoldings (cf.~\cite{Okzv}). 

In \S\ref{sec:Example}, Theorem~\ref{thm:main} is explained by examples with $n=4$.

Under a fractional linear transformation, KZ-type equation $\mathcal M$ can be assumed 
to have no singularity at infinity and 
in \S\ref{sec:infinity}, we examine the middle convolution of the equation.
In \S\ref{sec:fixedpt}, we study KZ-type equation $\mathcal M$ with fixed singular points.
In this case, $\Sp\mathcal M$ corresponds to single-elimination tournaments with a certain
restriction.  The equation has only one variable, our result corresponds
to that in \cite{DR}.
Accessory parameters and rigidity of a KZ-type equation and $\Sp\mathcal M$ for a 
Fuchsian holonomic system $\mathcal M$ are explained in \S\ref{sec:accessory}.
\section{Single-elimination tournaments}
\label{sec:tournament}
We consider single-elimination tournaments of $n$ teams.
In this paper tournaments always mean single-elimination tournaments.
A tournament of 3 teams distinguished by the labels $0$, $1$ and $2$ are
expressed by
\raisebox{-2mm}{\begin{tikzpicture}
\draw
(0,0)--(0,0.2)--(0.4,0.2)--(0.4,0)
;
\draw
(0.2,0.2)--(0.2,0.4)--(0.8,0.4)--(0.8,0)
(0.5,0.4)--(0.5,0.6)
node at (0,-0.2){$0$}
node at (0.4,-0.2){$1$}
node at (0.8,-0.2){$2$}
;
\end{tikzpicture}}
.
This chart means that team 0 and team 1 play first and the winner of this game and 
team 2 play the final game.
If the teams are not distinguished, there are two patterns 
\begin{tikzpicture}\ 
\draw
(0,0)--(0,0.2)--(0.4,0.2)--(0.4,0);
\draw
(0.2,0.2)--(0.2,0.4)--(0.8,0.4)--(0.8,0)
(0.5,0.4)--(0.5,0.6)
;
\end{tikzpicture} 
\ and \
\begin{tikzpicture}
\draw
(0.4,0)--(0.4,0.2)--(0.8,0.2)--(0.8,0);
\draw
(0,0)--(0,0.4)--(0.6,0.4)--(0.6,0.2)
(0.3,0.4)--(0.3,0.6);
\end{tikzpicture}
.
In the case of the tournaments of $n$ teams, the patterns correspond to binary 
one-rooted trees with $n{-}1$ non-leaf nodes and $n$ leaves figured in a plane.   
The total number of them are given by the Catalan number $C_{n-1}$. In this case, 
there are $n{-}1$ games and each game determines a 
final winner of some teams by a sub-tournament.

A tournament of three teams is determined by the teams playing the first game and there are 3 cases of the tournaments, which are described by
\begin{equation}\label{eq:teams}
 \bigl\{\{0,1\},\{0,1,2\}\bigr\},\ \bigl\{\{1,2\},\{0,1,2\}\bigr\},\ \bigl\{\{0,2\},\{0,1,2\}
 \bigr\}
\end{equation}
These are the sets of games in the tournaments and each game is labeled by a set of teams such that the game determines the final winner of the teams.  
Since the final game corresponds to the set of all teams, we may express the tournaments by the games excluding the final games, which we call shortened expression. 
Namely, the set of tournaments of three teams is 
\begin{equation}
 \bigl\{\{0,1\}\bigr\},\ \bigl\{\{1,2\}\bigr\},\  \bigl\{\{0,2\}\bigr\}
\end{equation}
in shortened expression.

If a tournament is changed to another tournament by a permutation of teams, we think that
these tournaments are isomorphic and call this isomorphic class a type of the tournament. 
The number of isomorphic classes of tournaments of $n$ teams is the $n$-th Wedderburn-Etherington 
number and a tournament can be expressed by $n{-}1$ subsets or $n{-}2$ proper subsets of 
the set of teams.

We moreover consider types of tournaments indicating the final winner and we call them win types.
There is one type of the tournaments of 3 teams and there are two win types of them.
\[
\begin{tikzpicture}
\draw[thick]
(0,0)--(0,0.2)--(0.4,0.2)--(0.4,0)
;
\draw
(0.2,0.2)--(0.2,0.4)--(0.8,0.4)--(0.8,0)
(0.5,0.4)--(0.5,0.6)
node at (0,-0.2){$0$}
node at (0.4,-0.2){$1$}
node at (0.8,-0.2){$2$}
;
\end{tikzpicture}
\raisebox{3mm}{$=$}
\begin{tikzpicture}
\draw[thick]
(0,0)--(0,0.2)--(0.4,0.2)--(0.4,0)
;
\draw
(0.2,0.2)--(0.2,0.4)--(0.8,0.4)--(0.8,0)
(0.5,0.4)--(0.5,0.6)
node at (0,-0.2){$1$}
node at (0.4,-0.2){$0$}
node at (0.8,-0.2){$2$}
;
\end{tikzpicture}
\raisebox{3mm}{$=$}
\begin{tikzpicture}
\draw[thick]
(0.4,0)--(0.4,0.2)--(0.8,0.2)--(0.8,0)
;
\draw
(0,0)--(0,0.4)--(0.6,0.4)--(0.6,0.2)
(0.3,0.4)--(0.3,0.6)
node at (0,-0.2){$2$}
node at (0.4,-0.2){$0$}
node at (0.8,-0.2){$1$}
;
\end{tikzpicture}
\raisebox{3mm}{$\ne$}
\begin{tikzpicture}
\draw[thick]
(0,0)--(0,0.2)--(0.4,0.2)--(0.4,0)
;
\draw
(0.2,0.2)--(0.2,0.4)--(0.8,0.4)--(0.8,0)
(0.5,0.4)--(0.5,0.6)
node at (0,-0.2){$1$}
node at (0.4,-0.2){$2$}
node at (0.8,-0.2){$0$}
;
\end{tikzpicture}\ 
\ 
\begin{tikzpicture}
\draw[thick]
(0,0)--(0,0.2)--(0.4,0.2)--(0.4,0)
;
\draw
(0.2,0.2)--(0.2,0.4)--(0.8,0.4)--(0.8,0)
(0.5,0.4)--(0.5,0.6)
node at (0,-0.2){$0$}
node at (0.4,-0.2){$2$}
node at (0.8,-0.2){$1$}
;
\end{tikzpicture}
\raisebox{3mm}{,}
\qquad\quad
\begin{tikzpicture}
\draw[thick]
(0,0)--(0,0.2)--(0.2,0.2) 
(0.2,0.2)--(0.2,0.4)--(0.5,0.4) 
(0.5,0.4)--(0.5,0.6)
;
\draw
(0.3,0.2)--(0.4,0.2)--(0.4,0)
(0.6,0.4)--(0.8,0.4)--(0.8,0)
node at (0,-0.2) {$\circ$}
;
\end{tikzpicture}
\quad
\begin{tikzpicture}
\draw[thick]
(0.5,0.4)--(0.8,0.4)--(0.8,0)
(0.5,0.4)--(0.5,0.6)
;
\draw
(0,0)--(0,0.2)--(0.4,0.2)--(0.4,0)
(0.2,0.2)--(0.2,0.4)--(0.4,0.4) 
node at (0.8,-0.2){$\circ$}
;
\end{tikzpicture}\]
The above left hand side shows the expression of the tournament and the thick segment indicates
that the two tournaments selecting two players of the game are isomorphic by a suitable permutation
of teams. 
The right hand side gives 
the tournaments distinguished only by the final winner indicated by $\circ$.  

In the case of tournaments of three teams there are one type, three cases of tournaments, two patterns and two win types (1 type, 2 patterns, 3 tournaments, 2 win types).  

We give examples of the tournaments of 4 and 5 teams in shortened form: 

\medskip
\centerline{\bf 4 teams}  

\vspace*{-2mm}
\begin{tikzpicture}
\draw[thick]
(0,0)--(0,0.2)--(0.4,0.2)--(0.4,0)
(0.8,0)--(0.8,0.2)--(1.2,0.2)--(1.2,0)
(0.2,0.2)--(0.2,0.4)--(1,0.4)--(1,0.2)
(0.6,0.4)--(0.6,0.6)
;
\draw
node at (0,-0.2){$0$}
node at (0.4,-0.2){$1$}
node at (0.8,-0.2){$2$}
node at (1.2,-0.2){$3$}
node at (2.45,0.3) {: 1 pattern}
node at (2.65,-0.2) {: $\frac{4!}{2^3}=3$ cases}
node at (0,-0.5){$\circ$}
node at (0.8,-0.8){$\{\{0,1\},\{2,3\}\}$}
;
\end{tikzpicture}

\begin{tikzpicture}
\draw[thick]
(0,0)--(0,0.2)--(0.4,0.2)--(0.4,0)
;
\draw
(0.2,0.2)--(0.2,0.4)--(0.8,0.4)--(0.8,0)
(0.4,0.4)--(0.4,0.6)--(1.2,0.6)--(1.2,0)
(0.8,0.6)--(0.8,0.8)
node at (0,-0.2){$0$}
node at (0.4,-0.2){$1$}
node at (0.8,-0.2){$2$}
node at (1.2,-0.2){$3$}
node at (2.45,0.3) {: 4 patterns}
node at (2.65,-0.2) {: $\tfrac{4!}2=12$ cases}
node at (0,-0.5){$\circ$}
node at (0.8,-0.5){$\circ$}
node at (1.2,-0.5){$\circ$}
node at (0.9,-0.8){$\{\{0,1,2\},\{0,1\}\}$}
;
\end{tikzpicture}
\quad
\begin{tikzpicture}
\draw
(0.4,0)--(0.4,0.2)--(0.8,0.2)--(0.8,0)
(0,0)--(0,0.4)--(0.6,0.4)--(0.6,0.2)
(0.4,0.4)--(0.4,0.6)--(1.2,0.6)--(1.2,0)
(0.8,0.6)--(0.8,0.8)
node at (0,-0.2){$2$}
node at (0.4,-0.2){$0$}
node at (0.8,-0.2){$1$}
node at (1.2,-0.2){$3$}
node at (1.2,-0.8){\phantom{0}}
;
\end{tikzpicture}
\begin{tikzpicture}
\draw
(0.4,0)--(0.4,0.2)--(0.8,0.2)--(0.8,0)
(0.6,0.2)--(0.6,0.4)--(1.2,0.4)--(1.2,0)
(0,0)--(0,0.6)--(0.8,0.6)--(0.8,0.4)
(0.4,0.6)--(0.4,0.8)
node at (0,-0.2){$3$}
node at (0.4,-0.2){$0$}
node at (0.8,-0.2){$1$}
node at (1.2,-0.2){$2$}
node at (1.2,-0.8){\phantom{0}}
;
\end{tikzpicture}
\begin{tikzpicture}
\draw
(0.8,0)--(0.8,0.2)--(1.2,0.2)--(1.2,0)
(0.4,0)--(0.4,0.4)--(1,0.4)--(1,0.2)
(0,0)--(0,0.6)--(0.8,0.6)--(0.8,0.4)
(0.4,0.6)--(0.4,0.8)
node at (0,-0.2){$3$}
node at (0.4,-0.2){$2$}
node at (0.8,-0.2){$0$}
node at (1.2,-0.2){$1$}
node at (1.2,-0.8){\phantom{0}}
;
\end{tikzpicture}\\[-1mm]
2 types, 5 (=1+4) patterns, 15 (=3+12) tournaments, 4 (=1+3) win types

\medskip

\noindent
\scalebox{0.84}[0.9]{\begin{tikzpicture}
\draw[thick]
(0,0)--(0,0.2)--(0.4,0.2)--(0.4,0)
(0.8,0)--(0.8,0.2)--(1.2,0.2)--(1.2,0)
(0.2,0.2)--(0.2,0.4)--(1,0.4)--(1,0.2)
(0.6,0.4)--(0.6,0.6)
;
\draw
(0.6,0.4)--(0.6,0.6)--(1.6,0.6)--(1.6,0)
(1.1,0.6)--(1.1,0.8)
node at (0,-0.2){$0$}
node at (0.4,-0.2){$1$}
node at (0.8,-0.2){$2$}
node at (1.2,-0.2){$3$}
node at (1.6,-0.2){$4$}
node at (2.8,0.3) {: 2 patterns}
node at (3.05,-0.2) {: $\frac{5!}{2^3}=15$ cases}
node at (0,-0.5){$\circ$}
node at (1.6,-0.5){$\circ$}
node at (1.9,-0.9){$\bigl\{\{0,1\},\{0,1,2,3\},\{2,3\}\bigr\}$}
;
\end{tikzpicture}
\ 
\begin{tikzpicture}
\draw[thick]
(0,0)--(0,0.2)--(0.4,0.2)--(0.4,0)
(1.2,0)--(1.2,0.2)--(1.6,0.2)--(1.6,0)
;
\draw
(0.2,0.2)--(0.2,0.4)--(0.8,0.4)--(0.8,0)
(0.4,0.4)--(0.4,0.6)--(1.4,0.6)--(1.4,0.2)
(0.9,0.6)--(0.9,0.8)
node at (0,-0.2){$0$}
node at (0.4,-0.2){$1$}
node at (0.8,-0.2){$2$}
node at (1.2,-0.2){$3$}
node at (1.6,-0.2){$4$}
node at (2.8,0.3) {: 4 patterns}
node at (3.05,-0.2) {: $\frac{5!}{2^2}=30$ cases}
node at (0,-0.5){$\circ$}
node at (0.8,-0.5){$\circ$}
node at (1.2,-0.5){$\circ$}
node at (1.7,-0.9){$\bigl\{\{0,1,2\},\{0,1\},\{3,4\}\bigr\}$}
node at (2.2,1.2) {\large\bf 5 teams};
\end{tikzpicture}
\ 
\begin{tikzpicture}
\draw[thick]
(0,0)--(0,0.2)--(0.4,0.2)--(0.4,0)
;
\draw
(0.2,0.2)--(0.2,0.4)--(0.8,0.4)--(0.8,0)
(0.4,0.4)--(0.4,0.6)--(1.2,0.6)--(1.2,0)
(0.6,0.6)--(0.6,0.8)--(1.6,0.8)--(1.6,0)
(1.1,0.8)--(1.1,1.0)
node at (0,-0.2){$0$}
node at (0.4,-0.2){$1$}
node at (0.8,-0.2){$2$}
node at (1.2,-0.2){$3$}
node at (1.6,-0.2){$4$}
node at (0,-0.5){$\circ$}
node at (0.8,-0.5){$\circ$}
node at (1.2,-0.5){$\circ$}
node at (1.6,-0.5){$\circ$}
node at (2.8,0.3) {: 8 patterns}
node at (3.05,-0.2) {: $\frac{5!}{2}=60$ cases}
node at (1.9,-0.9){$\bigl\{\{0,1,2\},\{0,1\},\{0,1,2,3\}\bigr\}$}
;
\end{tikzpicture}}\\[-1mm]
3 types, 14 patterns, 105 (=15+30+60) tournaments, 9 (=2+3+4) win types

\medskip
Moreover we have 

\medskip
\noindent
\if\ART
\scalebox{0.89}[0.94]{
\begin{tabular}{|c|r|r|r|r|r|r|r|r|c|}
\multicolumn{10}{c}{\bf Numbers of single-elimination tournaments}
\\\hline
teams&2&3&4&5&6&7&8&9&$n$\\\hline\hline
patterns&1&2&5&14&42&132&429&1430&$T_n\!=\!\tfrac{(2n-2)!}{n!(n-1)!}$
\\\hline
win types&1&2&4&9&20&46&106&248&$W_n$\\ \hline
types&1&1&2&3&6&11&23&46&$U_n$
\\\hline
tournaments&1&3&15&105&945&10395&135135&2027025&$K_n\!=\!(2n-3)!!$\\ \hline
\end{tabular}}
\else
\ \ 
\scalebox{0.95}{
\begin{tabular}{|c|r|r|r|r|r|r|r|r|c|}
\multicolumn{10}{c}{\bf Numbers of single-elimination tournaments}
\\\hline
teams&2&3&4&5&6&7&8&9&$n$\\\hline\hline
patterns&1&2&5&14&42&132&429&1430&$T_n=\tfrac{(2n-2)!}{n!(n-1)!}$
\\\hline
win types&1&2&4&9&20&46&106&248&$W_n$\\ \hline
types&1&1&2&3&6&11&23&46&$U_n$
\\\hline
tournaments&1&3&15&105&945&10395&135135&2027025&$K_n=(2n-3)!!$\\ \hline
\end{tabular}}
\fi

\medskip
Here we denote the numbers of tournaments, types, win types, patterns by 
$K_n$, $U_n$, $W_n$ and $T_n$, respectively, for the tournaments of $n$ teams.  
Then we have the following recurrence relations.
\begin{align*}
T_n&=\sum_{k=1}^{n-1}T_k\cdot T_{n-k},\ \ T_1=1,
  \tag{patterns}\allowdisplaybreaks\\
W_n&=\sum_{k=1}^{n-1}W_k\cdot U_{n-k},\ \ W_1=1,\tag{win types}\allowdisplaybreaks\\
U_n&=\frac12\Bigl(\sum_{k=1}^{n-1}U_k\cdot U_{n-k}\ 
\bigl(+\,U_{\frac n2} \text{ if $n$ is even}\bigr)\ \Bigr),\ \ U_1=1,\tag{types}
\allowdisplaybreaks\\
K_n&=\frac12\sum_{k=1}^{n-1}{}_nC_k\cdot K_k\cdot K_{n-k},\ \ K_1=1. \tag{tournaments}
\end{align*}

Consider a tournament of $n$ teams.
The players of the final game are the winner of $k$ teams and that of the other $n{-}k$ teams with $k=1,\dots,n{-}1$.
They are determined by the tournament of $k$ teams and that of $n-k$ teams.
Considering the chart of the tournament under a suitable permutation of teams, we 
have the corresponding numbers of cases as follows.

\medskip
\hspace{8.2cm}\begin{tikzpicture}
\draw[densely dotted]
(0.2,-0.2)--(0.2,0)--(1,0)--(1,-0.2)
(0.6,0)--(0.6,0.15)
(1.8,-0.2)--(1.8,0)--(2.6,0)--(2.6,-0.2)
(2.2,0)--(2.2,0.15);
\draw
(0.6,0.45)--(0.6,0.6)--(1.4,0.6) (1.55,0.6)--(2.2,0.6)--(2.2,0.45)
(1.4,0.6)--(1.4,0.8)
node at (1.4,0.9) {\scriptsize$n$ teams}
node at (0.6,0.3) {\scriptsize$k$ teams}
node at (2.2,0.3) {\scriptsize$(n{-}k)$ teams}
node at (0.6,-0.5) {\small$0,\ldots,k{-}1$} 
node at (2.2,-0.5) {\small$k,\ldots,n$} 
;
\end{tikzpicture}

\vspace{-2cm}
\noindent
\begin{align*}
T_n&\leftarrow T_k\cdot T_{n-k}\qquad\qquad\ \,(1\le k< n),\\
W_n&\leftarrow W_k\cdot U_{n-k}\qquad\qquad(1\le k< n),\\
U_n&\leftarrow 
 \begin{cases}
 U_k\cdot U_{n-k}&(1\le k<n-k),\\
 \tfrac12 U_k(U_k-1)+U_k\phantom{AAAA}&(k=n-k),\\
 \end{cases}\\
K_n&\leftarrow \begin{cases}
   {}_nC_k\cdot K_k\cdot K_{n-k}&(1\le k<n-k),\\
   {}_nC_k\bigl(\tfrac12 K_k(K_k-1)\bigr)+
  \bigl(\tfrac12{}_nC_k\bigr)K_k&(k=n-k),\phantom{AAAAAAAAAAAAAAAAA}
   \end{cases}
\end{align*}
Hence, the recurrence relations of $T_n$ and $W_n$ are easily obtained because 
the first $k$ teams and the last $n-k$ teams are distinguished. 
To get the recurrence relations of $U_n$ and $K_n$, we may assume $k\le n-k$. 
Then taking care of 
the symmetry of the first $k$ teams and the last $n-k$ teams when $k=n-k$, we have 
their recurrence relations.

Consider a tournament of $n+1$ teams labeled by $0,1,\dots,n$. 
If the team $n$ does not participate the tournament, we delete the first game that the team 
is expected to play and we naturally have a tournament of $n$ teams as the following charts.
\[
\begin{tikzpicture}
\draw[thick]
(0.6,0.4)--(0.6,0.6)--(-0.4,0.6)--(-0.4,0)
(0.2,0.6)--(0.2,0.8);
\draw
(0,0)--(0,0.2)--(0.4,0.2)--(0.4,0)
(0.8,0)--(0.8,0.2)--(1.2,0.2)--(1.2,0)
(0.2,0.2)--(0.2,0.4)--(1,0.4)--(1,0.2)
(1.6,0)--(1.6,0.2)--(2,0.2)--(2,0)
(1.8,0.2)--(1.8,0.4)--(2.4,0.4)--(2.4,0)
(2,0.4)--(2,0.6)--(2.8,0.6)--(2.8,0)
(0.2,0.8)--(2.2,0.8)--(2.2,0.6)
(1.4,0.8)--(1.4,1.0)
node at (0,-0.2){$*$}
node at (0.4,-0.2){$*$}
node at (0.8,-0.2){$*$}
node at (1.2,-0.2){$*$}
node at (1.6,-0.2){$*$}
node at (2,-0.2){$*$}
node at (2.4,-0.2){$*$}
node at (2.8,-0.2){$*$}
node at (-0.4,-0.2) {$n$}
node at (1.4,1.2) {$n{+}1$ teams}
node at (4.1,0.5) {deletion}
node at (4.1,-0.1) {insertion}
;
\draw[->] (3.4,0.3)--(5,0.3);
\draw[<-] (3.4,0.1)--(5,0.1);
\end{tikzpicture}
\quad
\begin{tikzpicture}
\draw[thick] (0.6,0.4)--(0.6,0.8)
;
\draw[densely dotted]
(0.6,0.4)--(0.6,0.6)--(-0.4,0.6)--(-0.4,0)
;
\draw
(0,0)--(0,0.2)--(0.4,0.2)--(0.4,0)
(0.8,0)--(0.8,0.2)--(1.2,0.2)--(1.2,0)
(0.2,0.2)--(0.2,0.4)--(1,0.4)--(1,0.2)
(1.6,0)--(1.6,0.2)--(2,0.2)--(2,0)
(1.8,0.2)--(1.8,0.4)--(2.4,0.4)--(2.4,0)
(2,0.4)--(2,0.6)--(2.8,0.6)--(2.8,0)
(0.6,0.8)--(2.2,0.8)--(2.2,0.6)
(1.4,0.8)--(1.4,1.0)
node at (0,-0.2){$*$}
node at (0.4,-0.2){$*$}
node at (0.8,-0.2){$*$}
node at (1.2,-0.2){$*$}
node at (1.6,-0.2){$*$}
node at (2,-0.2){$*$}
node at (2.4,-0.2){$*$}
node at (2.8,-0.2){$*$}
node at (-0.4,-0.2) {$n$}
node at (1.4,1.2) {$n$ teams}
;
\end{tikzpicture}
\]
On the other hand, for a tournament of $n$ teams labeled by $0,1,\dots,n{-}1$, we can insert the 
first game of the team labeled by $n$ in any one of $2n{-}1$ vertical line segments in the chart. 
Namely, the new game is the first game of one of $n$ teams or the opponent of the game is a winner 
of one of the games of the tournament of $n$ teams.

The pair of a chart of a tournament of $n$
teams and one of its  vertical line segments and the chart of a tournament of $n{+}1$ teams
corresponds to each other by this operations, namely, deletion and insertion, respectively.
Hence we have
\begin{equation}
   K_{n+1}=(2n-1)K_n
.
\end{equation}
Moreover we have the following equalities by recurrence relations.
\begin{align*}
 K_n &= (2n-3)!!
     =\frac12 \sum_{k=1}^{n-1}{}_nC_k\cdot (2k-1)!!\cdot (2n-2k-1)!!
      ,\allowdisplaybreaks\\
 T_n &=\frac{(2n-2)!}{(n-1)!n!} 
     =\sum_{k=1}^{n-k}\frac{(2k-2)!}{(k-1)!k!}\frac{(2(n-k)-2)!}{(n-k-1)!(n-k)!},
  \allowdisplaybreaks\\
    1&=\Bigl(1-\sum_{k=1}^\infty U_nx^k\Bigr)\Bigl(\sum_{k=0}^\infty W_{k+1}x^k\Bigr).
\end{align*}
\begin{remark}\label{rem:basic-add} {\rm (i)} \ 
We say a deletion of a team is {\sl basic} when the first game of the team is also 
the first game of the opponent. An insertion of a team to a tournament is called {\sl basic} if  
the inserted game is the first game for the both players.
Note that the inversion of a basic deletion is a basic insertion and moreover at least 
one basic deletion is possible for any tournament.
Hence every tournament of $n$ teams is constructed by a successive application of basic 
insertions to a tournament of two teams.
Consequently, for given two tournaments of $n$ teams, a successive application 
of suitable $n{-}1$  pairs of a deletion and a basic insertion transforms one of the tournaments to the other.  Here we have only to look at the inserted games in this procedure.

{\rm (ii)} \ 
We consider the insertion which inserts a game played by the new team
and the winner of the original tournament,  and call it a top insertion. 
Applying successive top insertions to the game of two teams labeled by $0$ and $1$
we have a tournament of $n$ teams labeled by $0,\dots,n{-}1$ whose $n$-th game is 
played by the team with label $n{-}1$ and the selected team by the former games.

{\rm (iii)} \,The 4 diagrams given just before Remark~\ref{rem:spectra} show examples
of transformations of a tournament by a pair of a deletion and an insertion.
The last two diagrams correspond to a basic insertion and a top insertion, respectively.
\end{remark}

\bigskip
Let $L$ be a finite set with $|L|>1$.
Here $|L|$ denotes the cardinality of $L$.
\begin{definition}\label{def:ComSets}
$\mathcal I=\{I_\nu\mid \nu=1,\dots.r\}$ is \textit{a commuting family} of $L$ if
$|I_\nu|>1$ and $I_\nu\subset L$ for $\nu=1,\dots,r$ and 
\[
  I_\nu\cap I_{\nu'}=\emptyset\text{ or } I_\nu\subsetneqq I_{\nu'}\text{ or } I_\nu\supsetneqq I_{\nu'}
  \text{ \ for \ } 1\le \nu<\nu'\le r.
\]
Moreover if there is no commuting family $\mathcal I'$ of $L$ satisfying 
$\mathcal I \subsetneqq \mathcal I'$, 
$\mathcal I$ is called a {\sl maximal commuting family} of $L$. 
\end{definition}

Let $\mathcal I$ be a maximal commuting family of $L$ with $|L|>2$. 
If there is an element $I_0$ of $\mathcal I$ with $|I_0|=|L|-1$, then $\mathcal I\setminus \{L\}$
is a maximal commuting family of $I_0$.
If such $L_0$ does not exist in $\mathcal I$, there exist $I_1$ and $I_2$ in $\mathcal I$
which satisfy $I_1\cap I_2=\emptyset$ and
\[
  \mathcal I\setminus \{L\} =
  \mathcal I_1\cup\mathcal I_2,\ 
  \mathcal I_i:=\{I\in\mathcal I\mid  I \subset I_i\}\quad(i=1,\,2).
\]
Then $\mathcal I_i$ are maximal families of $I_i$ for $i=1$ and $2$.
Thus we have the following theorem.
\begin{theorem}
There is a  natural bijection of 
the set of maximal commuting families of $L$ onto the set of single-elimination tournaments of the teams labelled by $L$.  
\end{theorem}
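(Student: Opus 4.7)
The plan is to establish the bijection by induction on $n = |L|$, formalizing the sketch already laid out just before the theorem statement. The base case $|L| = 2$ is immediate: both sides contain exactly one object (the unique maximal commuting family $\{L\}$, and the unique tournament consisting of a single game). The forward map sends a tournament $T$ on $L$ to the family $\mathcal I(T)$ of subsets of $L$ that arise as the team-set of some subtournament, equivalently the set of leaves below some internal node of the underlying binary tree. Any two such subtournaments correspond to nodes that are either in disjoint subtrees or on a single root-to-leaf path, so $\mathcal I(T)$ is automatically a commuting family. To verify maximality, given $J \subseteq L$ with $|J| \geq 2$ and $J \notin \mathcal I(T)$, I would take the smallest subtournament-set $I$ containing $J$; minimality forces $J$ to meet both sides $I^L, I^R$ of the final game of the $I$-subtournament non-trivially without containing both. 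A short case analysis (according to whether $I^L$ or $I^R$ is a singleton leaf) then exhibits an element of $\mathcal I(T)$ with which $J$ is neither disjoint nor nested.

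For the inverse direction, given a maximal commuting family $\mathcal I$, I first observe that $L \in \mathcal I$ since $L$ is comparable with every subset of itself. The structural step is to analyze the maximal proper elements $J_1, \ldots, J_m$ of $\mathcal I \setminus \{L\}$, i.e.\ those not strictly contained in any other element of $\mathcal I \setminus \{L\}$. These are pairwise disjoint by the commuting condition. I would then use the maximality of $\mathcal I$ three times: (i) if the complement $L \setminus (J_1 \cup \cdots \cup J_m)$ had cardinality $\geq 2$, it could be adjoined while preserving the commuting property; (ii) if that complement had cardinality exactly $1$, then $L$ minus the leftover point would be a size-$(|L|-1)$ set that could be adjoined, landing us in Case~1 and forcing $m=1$; (iii) if $m \geq 3$, the union $J_i \cup J_j$ for any two parts would be adjoinable. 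This pins down exactly the two alternatives displayed in the excerpt: Case~1 with a single $I_0 \in \mathcal I$ of cardinality $|L|-1$, or Case~2 with $m=2$ and $J_1 \sqcup J_2 = L$, both $|J_i| \geq 2$.

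The induction then closes: in Case~1 I would verify that $\mathcal I \setminus \{L\}$ is a maximal commuting family of $I_0$, and in Case~2 that each $\mathcal I_i = \{I \in \mathcal I : I \subseteq J_i\}$ is a maximal commuting family of $J_i$. The key point in both verifications is that any candidate $K$ enlarging the smaller family is automatically comparable with $L$ and (in Case~2) disjoint from the other part, so maximality of $\mathcal I$ itself rules it out. The inductive hypothesis then supplies a tournament of $I_0$ (respectively of $J_1$ and $J_2$), and these assemble into a tournament of $L$ by placing the singleton team $L \setminus I_0$ against the $I_0$-winner (Case~1) or by matching the two sub-tournament winners in the final (Case~2). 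Since the forward and backward constructions are governed by the same recursion, they invert each other, and the construction commutes with relabelings of $L$, giving the naturality. The main obstacle is the structural dichotomy of Cases~1 and~2; once that is extracted from maximality, the inductive book-keeping is routine, and the count $M_n = n \cdot M_{n-1} + \sum_{2 \leq k \leq n/2}(\text{partition count})\,M_k M_{n-k}$ agrees with the recursion for $K_n = (2n-3)!!$ displayed earlier, providing a numerical consistency check.
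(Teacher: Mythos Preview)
Your proposal is correct and follows essentially the same inductive route as the paper: the paper's entire argument is the two-case dichotomy displayed just before the theorem (an element $I_0$ of size $|L|-1$ versus a splitting $\mathcal I\setminus\{L\}=\mathcal I_1\cup\mathcal I_2$), and you have simply written out in full the maximality verifications, the forward map, and the mutual inversion that the paper leaves implicit.
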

Through the expression of the games of a tournament by 
the labels of the related teams as in \eqref{eq:teams},
a maximal commuting family $\mathcal I$ of $L$ corresponds to a single-elimination tournament of 
teams labeled by the elements of $L$.
In particular
\begin{align}
  |\mathcal I|=|L|-1.
\end{align}
\begin{definition}\label{def:bry}
For a maximal commuting family $\mathcal I$ of $L$, we put
\begin{equation}
 \widetilde{\mathcal I}:=\mathcal I\cup \bigcup_{\nu\in L}\bigl\{\{\nu\}\bigr\}
\end{equation}
and define 
\begin{equation}\label{eq:bI}
  b(I)=\{\bar b(I),\bar b'(I)\} \qquad(I\in\mathcal I,\ \bar b(I),\,\bar b'(I)
  \in\widetilde{\mathcal I})
\end{equation}
so that $I=\bar b(I)\sqcup\bar b'(I)$, namely,
\[ 
  \bar b(I)\cap \bar b'(I)=\emptyset,\ \ \bar b(I)\cup \bar b'(I)=I.
\]
Suppose the tournament corresponding to $\mathcal I$ is finished.
We consider that $\bar b$ specifies the losing side for each 
game in the tournament matches.
Moreover, suppose $i\in L$ is the final winner.
Then $i\not\in\bar b(I)$ for $I\in\mathcal I$. 
The map $\bar b$ satisfying this condition is denoted by $b^i$.
\end{definition}
Here, we give an example with $L=\{0,1,2,3,4\}$ and

\vspace{-6mm}
\begin{gather*}
  \quad\mathcal I=\bigl\{\{0,1\},\{0,1,2,3\},\{2,3\},\{0,1,2,3,4\}\bigr\}
\qquad\qquad\qquad\quad
\raisebox{-10mm}{\scalebox{0.9}{\begin{tikzpicture}
\draw
(-0.05,0)--(-0.05,0.2)--(0.05,0.2) (0.4,0.2)--(0.5,0.2)--(0.5,0)
(0.95,0)--(0.95,0.2)--(1.05,0.2) (1.45,0.2)--(1.55,0.2)--(1.55,0)
(0.25,0.3)--(0.25,0.6)--(0.35,0.6) (1.15,0.6)--(1.25,0.6)--(1.25,0.3)
;
\draw
(0.75,0.7)--(0.75,1)--(0.85,1) (1.9,1)--(2,1)--(2,0)
(1.4,1.1)--(1.4,1.3)
node at (0.25,0.2) {\small $01$}
node at (1.25,0.2) {\small $23$}
node at (0.75,0.6) {\small $0123$}
node at (1.375,1) {\small $01234$}
node at (0,-0.2){$0$}
node at (0.5,-0.2){$1$}
node at (1,-0.2){$2$}
node at (1.5,-0.2){$3$}
node at (2,-0.2){$4$}
;
\end{tikzpicture}}}\\[-10mm]
\intertext{and}
 \begin{split}
 b(\{0,1,2,3\})&=\bigl\{\{2,3\},\{0,1\}\bigr\},\qquad\,  b^0(\{0,1,2,3\})=\{2,3\},\\
 b(\{0,1,2,3,4\})&=\bigl\{\{4\},\{0,1,2,3\}\bigr\},\ b^0(\{0,1,2,3,4\})=\{4\},\\
 b(\{2,3\})&=\bigl\{\{2\},\{3\}\bigr\},\qquad\qquad\qquad b^0(\{2,3\})=\{2\}.
 \phantom{AAAAAAAAAAA}
\end{split}
\end{gather*}

Indicating the losing side by a gap of segment in a tournament chart, we obtain the following
left chart. Then the players of each game is determined by $\bar b$ as is indicated in 
the following right chart.  We denote the labels of the 
players of the game corresponding to $I\in\mathcal I$ by $I_{\bar b}$.  
Then 
\begin{equation}\label{eq:players}
  I_{\bar b}:=I\setminus \bigcup_{I\supsetneqq J\in\mathcal I}\bar b(J),
\end{equation}
\qquad 
\raisebox{-6mm}{
\begin{tikzpicture}
\draw
(0,0)--(0,0.2)--(0.2,0.2) (0.3,0.2)--(0.4,0.2)--(0.4,0)
(0.8,0)--(0.8,0.2)--(0.9,0.2) (1,0.2)--(1.2,0.2)--(1.2,0)
(0.2,0.2)--(0.2,0.4)--(0.6,0.4) (0.7,0.4)--(1,0.4)--(1,0.2)
(0.6,0.4)--(0.6,0.6)--(1.2,0.6) (1.3,0.6)--(1.6,0.6)--(1.6,0)
(1.2,0.6)--(1.2,0.8)
node at (0,-0.2){$0$}
node at (0.4,-0.2){$1$}
node at (0.8,-0.2){$2$}
node at (1.2,-0.2){$3$}
node at (1.6,-0.2){$4$}
;
\end{tikzpicture}\qquad
\begin{tikzpicture}
\draw
(0,0)--(0,0.2)--(0.05,0.2) (0.35,0.2)--(0.4,0.2)--(0.4,0)
(0.8,0)--(0.8,0.2)--(0.85,0.2) (1.15,0.2)--(1.2,0.2)--(1.2,0)
(0.2,0.3)--(0.2,0.4)--(0.35,0.4) (0.85,0.4)--(1,0.4)--(1,0.3)
(0.6,0.5)--(0.6,0.6)--(0.85,0.6) (1.35,0.6)--(1.6,0.6)--(1.6,0)
(1.1,0.7)--(1.1,0.8)
node at (0,-0.2){$0$}
node at (0.4,-0.2){$1$}
node at (0.8,-0.2){$2$}
node at (1.2,-0.2){$3$}
node at (1.6,-0.2){$4$}
node at (0.2,0.2) {\scriptsize $01$}
node at (1.0,0.2) {\scriptsize $23$}
node at (0.6,0.4) {\scriptsize $03$}
node at (1.1,0.6) {\scriptsize $04$}
;
\end{tikzpicture}}
\qquad\qquad$\begin{cases}
\{0,1\}_{\bar b}=\{0,1\},\\
\{2,3\}_{\bar b}=\{2,3\},\\
\{0,1,2,3\}_{\bar b}=\{0,3\},\\
\{0,1,2,3,4\}_{\bar b}=\{0,4\}.
\end{cases}
$
\section{Spectra of KZ-type equations}
\label{sec:KZ}
A system of the equations
\begin{align}\label{eq:KZ}
\mathcal M : 
 \frac{\p u}{\p x_i}
  &=\sum_{\substack{0\le\nu\le n-1\\ \nu\ne i}}
    \frac{A_{i\nu}}{x_i-x_\nu}u
 \quad(i=0,\dots,n{-}1)\\[-6mm]\notag
\end{align}
with a vector 
$u=\left(\begin{smallmatrix}u_1\\[-1mm]\vdots\\ u_N\end{smallmatrix}\right)$ and 
matrices $A_{ij}=A_{ji}\in M(N,\mathbb C)$
is called a Knizhnik-Zamolodchikov-type (KZ-type) equation of rank $N$ (cf. [KZ]). 
Here $M(N,\mathbb C)$ denote the space of square matrices of size $N$ with elements in $\mathbb C$
and $A_{ij}$ is called the residue matrix along $x_i=x_j$ and 
they satisfy the integrability condition
\begin{align}
 [A_{ij},A_{k\ell}]&=0\qquad
  \bigl(\forall\{i,j,k,\ell\}\subset
 L_n\bigr),\\
 [A_{ij},A_{ik}+A_{jk}]&=0\qquad
  \bigl(\forall\{i,j,k\}\subset L_n\bigr),
\end{align}
where $i,j,k$ and $\ell$ are distinct indices.
Here and hereafter we use the notation
\begin{align}\label{eq:indices}
  L_n&:=\{0,1,\dots,n-1\},\quad \tilde L_n:=L_n\cup\{\infty\},\quad L_n^i:=L_n\setminus\{i\}.
\end{align}
\begin{definition}\label{def:ResMat}
General residue matrices are defined by 
\begin{align*}
A_{i\infty}&=-(A_{i0}+A_{i1}+\cdots+A_{i,n-1})\quad(i\in L_n),\\
A_{ii}&=A_\emptyset=A_i=0,\\
A_{i_1i_2\cdots i_k}&:=\displaystyle\sum_{1\le p<q\le k}A_{i_pi_q}\quad
\bigl(\{i_1,\dots,i_k\}\subset \tilde L_n\bigr).
\end{align*}
The matrix $A_{i\infty}$ is called the residue matrices of $\mathcal M$
along $x_i=\infty$.
\end{definition}
This definition and the integrability condition implies the following lemma.
\begin{lemma}[{\cite[\S2]{Okz}}] {\rm (i)} \ 
\qquad
$\displaystyle\sum_{i=0}^{n-1}A_{i\infty}=-2A_{L_n}$,

{\rm (ii)} \ 
For subsets $I$ and $J$ of $\tilde L_n$, we have
\begin{align}
  A_I-A_{\tilde L_n\backslash I}&=A_{L_n}
   &\bigl(I\subset L_n\bigr),\\
  [A_I,A_J]&=0&
  \bigl(I\cap J=\emptyset\text{ or }
  I\subset J\text{ or }I\supset J\bigr),\\
  [A_{L_n},A_I]&=0.
\end{align}
\end{lemma}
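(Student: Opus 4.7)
\medskip\noindent\textbf{Proof plan.} \
For (i), I would expand each $A_{i\infty}=-\sum_{j\in L_n^i}A_{ij}$ by definition and use the symmetry $A_{ij}=A_{ji}$, so that each pair $\{i,j\}\subset L_n$ contributes $A_{ij}$ exactly twice to $\sum_{i=0}^{n-1}A_{i\infty}$; the total therefore equals $-2\sum_{i<j}A_{ij}=-2A_{L_n}$.

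For the first identity in (ii), I would write $\tilde L_n\setminus I=(L_n\setminus I)\cup\{\infty\}$ for $I\subset L_n$, expand
\[
  A_{\tilde L_n\setminus I}=A_{L_n\setminus I}+\sum_{j\in L_n\setminus I}A_{j\infty}
\]
according to Definition~\ref{def:ResMat}, and then substitute $A_{j\infty}=-\sum_{k\in L_n^j}A_{jk}$. Splitting the resulting double sum by whether the second index $k$ lies in $L_n\setminus I$ or in $I$ produces the identity $A_{\tilde L_n\setminus I}=-A_{L_n\setminus I}-\sum_{i\in I,\,j\in L_n\setminus I}A_{ij}$, so that $A_I-A_{\tilde L_n\setminus I}$ reassembles into the full decomposition of $A_{L_n}$.

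For the commutation relations, I would treat three cases. \emph{(a)} If $I,J\subset L_n$ are disjoint, every pair $\{i,k\}\subset I$ is disjoint from every pair $\{l,m\}\subset J$, so the first integrability condition yields $[A_I,A_J]=0$ term by term. \emph{(b)} If $I\subset J\subset L_n$, decompose
\[
  A_J=A_I+A_{J\setminus I}+\sum_{i\in I,\,j\in J\setminus I}A_{ij};
\]
the first summand commutes with $A_I$ trivially, the second by case (a), and the third reduces, after isolating each $j\in J\setminus I$ and pairing the terms with $i<k$ in $I$, to expressions of the form $[A_{ik},A_{ij}+A_{kj}]$, which vanish by the second integrability condition. \emph{(c)} If $\infty$ lies in $I$ or $J$, I would use the first identity of (ii) already established to rewrite $A_I=A_{L_n\setminus I'}-A_{L_n}$ (with $I'=I\setminus\{\infty\}$) and similarly for $J$; the claim then reduces to the cases (a) or (b) together with the commutativity of $A_{L_n}$ with $A_K$ for $K\subset L_n$, which is itself case (b) with $J=L_n$.

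The third identity $[A_{L_n},A_I]=0$ is then case (b) with $J=L_n$ when $I\subset L_n$, and otherwise follows from the reduction in case (c). The main obstacle I anticipate is the bookkeeping in case (b): one must recognize that the cross-term commutator decomposes as $\sum_j\sum_{i<k\in I}([A_{ik},A_{ij}]+[A_{ik},A_{kj}])$, which is precisely the linear combination captured by the second integrability condition; once this pairing is made, the rest is formal.
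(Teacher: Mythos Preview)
Your proposal is correct. The paper does not give its own proof of this lemma; it simply records that the identities follow from Definition~\ref{def:ResMat} and the integrability condition, citing \cite[\S2]{Okz}. Your argument carries out exactly that verification: the double-counting for (i), the expansion of $A_{j\infty}$ for the first identity in (ii), and the case split for the commutator are all standard and correctly executed, including the pairing $\sum_{l\in I}[A_{ik},A_{lj}]=[A_{ik},A_{ij}+A_{kj}]$ in case~(b) that you flagged as the main bookkeeping step.
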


Since $[A_{L_n},A_{ij}]=0$ for $0\le i<j< n$, we have the following corollary. 
\begin{cor}\label{cor:kappa}
 Suppose the system $\mathcal M$ is irreducible, namely,
there is no nontrivial proper subspace $V\subset \mathbb C^N$ 
satisfying $A_{ij}V\subset V$ for $1\le i<j<n$.  Then
\begin{equation}
 A_{L_n}=\kappa I_N
\end{equation}
with a suitable $\kappa\in\mathbb C$.
\end{cor}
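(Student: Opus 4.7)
The plan is a direct application of Schur's lemma, using the commutation relations already established in Lemma~3.2 together with the irreducibility hypothesis.

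First, I would observe that the integrability relation recorded in Lemma~3.2~(iii) gives $[A_{L_n},A_{ij}]=0$ for every pair $0\le i<j<n$. Thus $A_{L_n}$ lies in the commutant of the collection $\{A_{ij}\mid 0\le i<j<n\}$ of matrices in $M(N,\mathbb C)$.

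Next, since $\mathbb C$ is algebraically closed, the matrix $A_{L_n}$ has at least one eigenvalue; pick any such eigenvalue $\kappa\in\mathbb C$ and set
\[
  V:=\ker\bigl(A_{L_n}-\kappa I_N\bigr)\subset\mathbb C^N.
\]
Then $V\ne\{0\}$. For every pair $i<j$ and every $v\in V$ the commutation relation yields
\[
  (A_{L_n}-\kappa I_N)A_{ij}v = A_{ij}(A_{L_n}-\kappa I_N)v = 0,
\]
so $A_{ij}v\in V$. Hence $V$ is a nontrivial subspace of $\mathbb C^N$ stable under every $A_{ij}$.

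Irreducibility of $\mathcal M$ means precisely that no nontrivial proper subspace of $\mathbb C^N$ is stable under all the $A_{ij}$, so $V=\mathbb C^N$, which is to say $A_{L_n}=\kappa I_N$. The argument is elementary once Lemma~3.2 is in hand; there is no real obstacle, and the only thing being used beyond that lemma is the standard Schur-type observation that a matrix in the commutant of an irreducible family over an algebraically closed field is a scalar.
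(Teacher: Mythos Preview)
Your argument is correct and matches the paper's approach: the paper simply notes just before the corollary that $[A_{L_n},A_{ij}]=0$ for all $0\le i<j<n$ and declares the result to follow, leaving the Schur-type step implicit, which is exactly what you have written out. One minor remark: what you cite as ``Lemma~3.2~(iii)'' is really the third displayed equation within part~(ii) of that lemma, namely $[A_{L_n},A_I]=0$.
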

\begin{definition}
The transformation of $\mathcal M$ induced from the map $u\mapsto (x_p-x_q)^\lambda u$
is denoted by $\Ad\bigl((x_p-x_q\bigr)^\lambda)$, which transforms the residue matrix 
$A_{pq}$ to $A_{pq}+\lambda$ and does not change the other residue matrices $A_{i\nu}$
in \eqref{eq:KZ}.
\end{definition}
If $\mathcal M$ is irreducible, $\Ad\bigl((x_p-x_q\bigr)^{-\tau}\bigr)$ maps
$\mathcal M$ to the equation with $A_{L_n}=0$.
\begin{definition}\label{def:homog}
A KZ-type equation $\mathcal M$ is called {\bf homogeneous} if $A_{L_n}=0$.
In this case, we have
\begin{align}\label{eq:homg}
 \begin{split}
  A_I&=A_{\tilde L_n\setminus I}\quad\ \ (I\subset L_n),\\
 A_{i\infty}&=A_{\tilde L_n\setminus\{i\}}\quad (i\in L_n).
\end{split}
\end{align}
\end{definition}
\begin{remark}
Let $\mathcal I$ be a commuting family of subsets of $\tilde L_n$.
Then
\[
  \{I\in\mathcal I\mid\infty\not\in I\}\cup  \{\tilde L_n\setminus I\mid \infty\in I\in\mathcal I\}
\]
is a commuting family of subsets of $L_n$.
\end{remark}
\begin{remark}\label{rem:HG}
Since the residue matrices are invariant by the coordinate transformation 
$x_0\mapsto ax_0+b$, we may specialize $(x_1,x_2)=(0,1)$ without loss of generality.
By this specialization the number of variables is reduced to $n{-}2$.
Appell's $F_1$ and $F_2$ etc.\ are realized as solutions to certain KZ-type equations
with $n=4$ (cf.~\cite{MO2024}).
\end{remark}
\begin{definition}\label{def:ComResFamily}
The set $\bigl\{A_I\mid I\in\mathcal I\}$ defined by the 
maximal commuting family $\mathcal I$ of $L_n$
is called a {\bf maximal commuting family of residue matrices} 
of $\mathcal M$.
\end{definition}

The result in the former section implies the following.
\begin{cor}\label{cor:numI}
{\rm (i)} \ 
$|\mathcal I|=|L|-1$.

{\rm (ii)} \ 
$\mathcal M$ has $(2n-3)!!$ maximal commuting families of residue matrices.
\end{cor}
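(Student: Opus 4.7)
The plan is to read off both parts from the combinatorics of Section~\ref{sec:tournament}. The unnumbered theorem immediately preceding this corollary gives a natural bijection between maximal commuting families of $L$ and single-elimination tournaments with teams labeled by $L$. Under this bijection, elements of $\mathcal I$ correspond to games, and the total number of tournaments of $n$ teams has been shown to equal $K_n=(2n-3)!!$. Specializing $L=L_n$ yields both statements.

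For part~(i), I would simply cite the display $|\mathcal I|=|L|-1$ already given as a consequence of the bijection theorem, with $|L_n|=n$. A self-contained inductive proof on $|L|$ is also immediate: the base case $|L|=2$ forces $\mathcal I=\{L\}$; for the inductive step, the dichotomy used in building the bijection --- either some $I_0\in\mathcal I$ has $|I_0|=|L|-1$, or there exist disjoint $I_1,I_2\in\mathcal I$ with $I_1\sqcup I_2=L$ --- reduces to smaller ground sets on which the hypothesis gives $|\mathcal I\setminus\{L\}|=|L|-2$, whence $|\mathcal I|=|L|-1$.

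For part~(ii), Definition~\ref{def:ComResFamily} identifies maximal commuting families of residue matrices of $\mathcal M$ with maximal commuting families $\mathcal I$ of $L_n$, so the count coincides with the number of such $\mathcal I$. Composing with the tournament bijection and using $K_n=(2n-3)!!$ --- obtained via the recurrence $K_{n+1}=(2n-1)K_n$ with $K_1=1$, equivalently from the deletion/insertion procedure illustrated in Remark~\ref{rem:basic-add} --- gives the claimed count.

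I expect no genuine obstacle here, since the content of the corollary is a direct specialization of facts already assembled in Section~\ref{sec:tournament}. The only point worth flagging is that the enumeration in part~(ii) is taken over the index families $\mathcal I$, not over the resulting subsets $\{A_I\mid I\in\mathcal I\}$ of the matrix algebra: a priori two different families could coincide as matrix sets if some $A_I=A_J$ with $I\ne J$, but Definition~\ref{def:ComResFamily} unambiguously counts the families themselves.
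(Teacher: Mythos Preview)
Your proposal is correct and matches the paper's own argument: the paper simply states that ``The result in the former section implies the following'' immediately before the corollary, i.e., it invokes the tournament bijection and the identities $|\mathcal I|=|L|-1$ and $K_n=(2n-3)!!$ from Section~\ref{sec:tournament} exactly as you do. Your additional remark distinguishing the count of index families $\mathcal I$ from the sets $\{A_I\mid I\in\mathcal I\}$ is a fair clarification not made explicit in the paper.
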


\begin{exmp}\label{exmpl:45}
We give examples of maximal commuting families of residue matrices.
Here $i,j,k,\ell,m,p,q$ are distinct indices.
Since the maximal family of residue matrices always contains $A_{L_n}$,
we give the family omitted $A_{L_n}$

{\rm (i)} \ 
When $n=4$, the families are 
\[\{A_{ij},A_{k\ell}\},\ \{A_{ij},A_{ijk}\}\qquad\bigl(\{i,j,k,\ell\}=\{0,1,2,3\}\bigr).\]
Moreover if $\mathcal M$ is homogeneous, we have $A_{0\infty}=A_{123}$ etc.\ and these 
$W_4=15$ families cf.~\cite{Okz}) are 
\[\{A_{ij},A_{k\ell}\}\qquad \bigl(\{i,j,k,\ell\}\subset\{0,1,2,3,\infty\}\bigr).\]

\smallskip
{\rm (ii)} \ 
When $n=5$, there are $W_5=105$ maximal commuting families of residue matrices (cf.~\cite[p.94]{Onarajp}) :
\begin{align*}
&\{A_{ij},A_{k\ell},A_{ijk\ell}\},\ \{A_{ij},A_{ijk},A_{\ell m}\},\ \{A_{ij},A_{ijk},A_{ijk\ell}\}
\\
&\qquad\qquad\bigl(\{i,j,k,\ell,m\}=\{0,1,2,3,4\}\bigr).
\end{align*}
Moreover if $\mathcal M$ is homogeneous, they are
\[
 \{A_{ij},A_{k\ell},A_{pq}\},\ \{A_{ij},A_{ijk},A_{pq}\}
 \qquad (\{i,j,k,\ell,p,q\}=\{0,1,2,3,4,5,\infty\}).
\]
\end{exmp}

\bigskip
\noindent
Now we review the notation introduced by \cite{Okz}.
The (generalized) eigenvalues and their multiplicities of a matrix $A\in M(N,\mathbb C)$
is written by
\[
  [A]=\{[\lambda_1]_{m_1},\dots,[\lambda_r]_{m_r}\}
\]
with $m_1+\cdots+m_r=N$ and $\prod_{i=1}^r(A-\lambda_i)=0$. 

Sometimes we assume $m_i\ge m_j$ if $\lambda_i=\lambda_j$ and moreover 
\[
 \mathrm{rank}\,
\prod_{\nu=1}^{k}(A-\lambda_\nu)=N-(m_1+\dots+m_k)\quad(k=1,\dots,r),
\]
or $A\in M(N,\mathbb C)$ is a limit of matrices satisfying this equality, 
but we do not assume them in this paper.

When two matrices $A,\,B\in M(N,\mathbb C)$ commute to each other, we can consider 
simultaneous eigenspace decompositions and we introduce the notation to the 
simultaneous eigenvalues and their multiplicities. For example, when
\[ 
A=\left(\begin{smallmatrix}0\\&0\\&&0\!\!\\&&&-1\end{smallmatrix}\right),\ 
B=\left(\begin{smallmatrix}1\\&2\\&&2\\&&&3\end{smallmatrix}\right),
\]
we have
\[
 [A]=\{[0]_3,[-1]_1\}{\,=\{[0]_3,-1\}},\  [B]=\{[1]_1,[2]_2,[3]_1\}{\,=\{1,[2]_2,3\}}
\]
and the simultaneous eigenvalues and their multiplicities are written by
\[[A:B]=\bigl\{[0:1]_1,[0:2]_2,[-1:3]_1\bigr\}{\,=\bigl\{[0:1],[0:1]_2,[-1:3]\bigr\}}.\]

In general, when $[B_i,B_j]=0\ \ (i,j=1,..,k)$,  we use the notation
\[
[B_1:\cdots:B_k]=
 \bigl\{[\lambda_{1,1}:\cdots:\lambda_{k,1}]_{m_1},\cdots,
 [\lambda_{1,r}:\cdots:\lambda_{k,r}]_{m_r}\bigr\}
\]
and for a positive integer $p$, we put
\[
[B_1:\cdots:B_k]_p=
 \bigl\{[\lambda_{1,1}:\cdots:\lambda_{k,1}]_{pm_1},\cdots,
 [\lambda_{1,r}:\cdots:\lambda_{k,r}]_{pm_r}\bigr\}.
\]

The matrices which commute to each other can be simultaneously transformed 
to upper triangular matrices under a suitable base.

\smallskip
\noindent
\begin{definition}\label{def:Sp}
Let $\mathcal L_n$ be the set of maximal commuting families of $L_n$.
The {\bf spectra} of $\mathcal M$ is defined by  
\begin{align*}
 \Sp\,\mathcal M&:=\bigl\{[A_{I_1}:\dots:A_{I_{n-1}}]\mid \mathcal I
  =\{I_1,\dots,I_{n-1}\}\bigr\}_{\mathcal I\in\mathcal L_n},\\
 \Sp' \mathcal M&:=\bigl\{[A_{I_1}:\dots:A_{I_{n-2}}]
  \mid \mathcal I
 =\{I_1,\dots,I_{n-2},L_n\}\bigr\}_{\mathcal I\in\mathcal L_n}
.
\end{align*}
In most cases, $A_{L_n}$ is a scalar matrix or 0 and we use $\Sp' \mathcal M$ in place of $\Sp\,\mathcal M$. For example, the transformation used in \cite[\S5]{MO2024} keeps the homogeneity and
therefore we may assume $A_{L_n}=0$.
\end{definition}

\begin{remark}\label{rem:spsum} 
Let $\{I_1,\dots,I_k\}$ be a maximal commuting family of $L_n$.
Then $[\sum_i c_iA_{I_i}]$ $(c_i\in\mathbb C)$ is obtained from $\Sp\mathcal M$.
For example, since $A_{01}+\cdots+A_{0k}=A_{01\cdots k}-A_{1\cdots k}$, 
we get $[A_{01}+\cdots+A_{0k}]$ from $\Sp\mathcal M$
(cf.~\cite{Osemilocal}, \S\ref{sec:semilocal}). 
\end{remark}

\section{Blowing up of singular points}
\label{sec:blowup}

Note that the KZ-type equation \eqref{eq:KZ}
is written in the Pfaffian form
\begin{equation}
du=\Omega u,\quad \Omega=\sum_{0\le i<j<n}\!\!\!
A_{ij}d\log(x_i-x_j). 
\end{equation}

Let $\mathcal I=\{I_1,\dots,I_{n-2},I_{n-1}=L_n\}$ be an element of $\mathcal L_n$.
Namely, $\mathcal I$ is a maximally commuting family of subsets of
$L_n=\{0,1,\ldots,n{-}1\}$. 
Using the notation given in Definition~\ref{def:bry} and \eqref{eq:players}, we put
\begin{align}
b(L_n)&=\{J,\,J'\},\ \ J=\{j_0,\dots,j_k\},\ \  J'=\{j'_0,\dots,j'_{k'}\},
\label{eq:semifinal}\\
(I_i)_{\bar b}&=\{n_i,n'_i\},\quad n_i,\,n'_i\in L_n.
\end{align}
Then $k+k'=n-2$ and $k\ge0$, $k'\ge 0$, 
and $J$ and $J'$ correspond
semi-final matches of the tournament $\mathcal I$.  
Note that $n_i$ and $n_i'$ correspond to the players of the match $I_i$, which 
are determined by the result of games of the tournament.
\noindent
Moreover $b(L_n)$ corresponds to the singular point 
\begin{equation}\label{eq:singpt}
 x_{j_0}=\cdots=x_{j_k} \text{ and } x_{j'_0}=\cdots=x_{j'_{k'}}
\end{equation}
of the KZ-type equation
and the local coordinate system $(x_{j_1},\dots,x_{j_k},x_{j'_1},\dots,x_{j'_{k'}})$ 
with putting $x_{j_0}=0$ and 
$x_{j'_0}=1$ is valid in a neighborhood of the point.
Note that $x_{j_\nu}-x_{j'_{\nu'}}$ does not vanish at the neighborhood.
We give a local coordinate system which gives a resolution of the singular point as follows.

\smallskip
\noindent
\begin{definition}
Define local coordinate system $(X_1,\dots,X_{n-2})$ by
\begin{equation}
x_{n_i}-x_{n'_i}=\displaystyle\prod_{I_i\subset I_j\ne L_n} X_j.
\end{equation}
\end{definition}
\begin{remark}
We may assume $n_i\in \bar b(I_i)$ and $n'_i\in\bar b'(I_i)$. Then the condition
\begin{equation}
 \begin{split}
 |x_{n_i}-x_\nu|\le \epsilon|x_{n_i}-x_{n'_i}|,\ 
   |x_{n'_i}-x_{\nu'}|\le \epsilon|x_{n_i}-x_{n'_i}|\\
 (\nu\in \bar b(I_i),\ \nu'\in\bar b'(I_i),\ i\in L_n)
 \end{split}
\end{equation}
with $0<\epsilon\ll1$ corresponds to a neighborhood of the origin of 
the local coordinate $(X_1,\dots,X_{n-2})$. 
Considering $\mathcal I\in\mathcal L$ satisfying \eqref{eq:semifinal}, the following theorem
gives a resolution of the singular point \eqref{eq:singpt}.
\end{remark}
\noindent
\begin{theorem} The 1-form 
$\Omega -\sum\limits_{i=1}^{n-2}A_{I_i}d\log(X_i)$ \ 
is non-singular around the origin of $(X_1,\dots,X_{n-2})$.
\end{theorem}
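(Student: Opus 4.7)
The plan is to reduce the theorem to showing that for each pair $p\ne q$ in $L_n$, the difference factors as
\[
x_p - x_q \;=\; u_{pq}(X)\prod_{I(p,q)\subset I_j\ne L_n} X_j,
\]
where $I(p,q)\in\mathcal I$ is the smallest member of $\mathcal I$ containing both $p$ and $q$, and $u_{pq}$ is holomorphic on a neighborhood of $X=0$ with $u_{pq}(0)\ne 0$. Existence of $I(p,q)$ is immediate from Definition~\ref{def:ComSets}: any two elements of $\mathcal I$ containing $\{p,q\}$ have nonempty intersection and are therefore nested, so the chain of such elements has a least member.

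I would prove the factorization by induction on $n$, using the decomposition $L_n=J\sqcup J'$ coming from $b(L_n)=\{J,J'\}$. Since every $I\in\mathcal I\setminus\{L_n\}$ is contained entirely in $J$ or in $J'$ (the subtree structure of the tournament under the root), the restrictions $\mathcal I_J:=\{I\in\mathcal I:I\subseteq J\}$ and $\mathcal I_{J'}:=\{I\in\mathcal I:I\subseteq J'\}$ are maximal commuting families of $J$ and $J'$. The relation for $i=n-1$ is $x_{n_{n-1}}-x_{n'_{n-1}}=1$ (empty product), so after affine normalization of the $x$-variables we may take $x_{n_{n-1}}=1$ and $x_{n'_{n-1}}=0$, with $n_{n-1}\in J$ and $n'_{n-1}\in J'$. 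When $|J|\ge 2$, let $j_J$ be the index with $I_{j_J}=J$; dividing the big-problem relations supported inside $J$ by $X_{j_J}$ shows that the rescaled variables $\tilde x_p:=(x_p-x_{n_{n-1}})/X_{j_J}$ for $p\in J$ satisfy exactly the sub-problem relations attached to $\mathcal I_J$ (with its own empty-product normalization between the two final players of $J$). The inductive hypothesis applied to $J$ furnishes the factorization for pairs inside $J$, and the same argument handles pairs inside $J'$. For a cross pair $p\in J$, $q\in J'$, one has $I(p,q)=L_n$ so the product is empty, and writing
\[
x_p - x_q \;=\; (x_p-x_{n_{n-1}})\;+\;1\;-\;(x_q-x_{n'_{n-1}})
\]
together with the observation that each outer term carries a factor $X_{j_J}$ or $X_{j_{J'}}$ (hence vanishes at $X=0$) yields $u_{pq}(0)=1\ne 0$. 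The boundary cases $|J|=1$ or $|J'|=1$ are trivial: the relevant sub-problem collapses and no rescaling is needed.

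Granted the factorization, logarithmic differentiation yields
\[
d\log(x_p-x_q) \;=\; d\log u_{pq} \;+\; \sum_{I(p,q)\subset I_j\ne L_n} d\log X_j,
\]
with $d\log u_{pq}$ holomorphic at the origin. Substituting into $\Omega=\sum_{0\le p<q<n}A_{pq}\,d\log(x_p-x_q)$ and reversing the order of summation, the coefficient of $d\log X_j$ becomes $\sum_{\{p,q\}\subseteq I_j}A_{pq}$, which equals $A_{I_j}$ by Definition~\ref{def:ResMat}. Hence $\Omega-\sum_{j=1}^{n-2}A_{I_j}\,d\log X_j$ is a sum of holomorphic terms near the origin.

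The main technical obstacle is the inductive step: one must verify carefully that the rescaling $\tilde x_p=(x_p-x_{n_{n-1}})/X_{j_J}$ converts the big-problem relations into the sub-problem relations exactly, and that the extra factor $X_{j_J}$ produced by this rescaling corresponds precisely to the index $j_J$ with $I_{j_J}=J$ appearing in the target product $\prod_{I(p,q)\subset I_j\ne L_n}X_j$. Once this bookkeeping is completed, the rest of the argument is a routine rearrangement of sums.
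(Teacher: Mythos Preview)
Your argument is correct and reaches the same factorization
\[
x_p-x_q \;=\; u_{pq}(X)\prod_{I(p,q)\subset I_j\ne L_n} X_j,\qquad u_{pq}(0)\ne 0,
\]
that the paper establishes, after which the logarithmic-differentiation step and the interchange of sums are identical to the paper's. The difference lies in how the factorization is proved. The paper isolates a separate linear lemma (Lemma~\ref{lem:sumbase}): it shows that each $x_i-x_j$ is an integer linear combination of the basic differences $x_I=x_{n_I}-x_{n'_I}$, with the only terms appearing being those with $I\subseteq I_{i,j}$ and the coefficient of $x_{I_{i,j}}$ equal to $\pm1$; this is done by induction on $|I_{i,j}|$ via the identity $x_i-x_j=(x_k-x_\ell)+(x_i-x_k)-(x_j-x_\ell)$ with $\{k,\ell\}=(I_{i,j})_{\bar b}$, and the multiplicative factorization then follows by substituting $x_I=\prod_{I\subset I_\nu\ne L_n}X_\nu$. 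Your route instead inducts on $n$ using the root decomposition $L_n=J\sqcup J'$ together with the rescaling $\tilde x_p=(x_p-x_{n_{n-1}})/X_{j_J}$ to reduce to the subtrees. Both are short; the paper's version avoids the rescaling bookkeeping and yields the slightly sharper statement that $u_{pq}$ is a polynomial in the variables $X_\nu$ with $I_\nu\subsetneqq I(p,q)$ and constant term $\pm1$, whereas your inductive splitting makes the recursive tournament structure more visible.
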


\begin{proof}
Under the notation in Lemma~\ref{lem:sumbase} for $1\le i<j<n$, 
the condition $I_{i,j}\subset I$ for $I\in\mathcal I$ is equals to $\{i,j\}\subset I$ and therefore the lemma shows
\begin{equation}
  x_i-x_j=f_{i,j}(X)\cdot\!\!\! \prod_{I_{i,j}\subset I_\nu\in\mathcal I\setminus\{L_n\}}X_\nu.
\end{equation}
Here $f_{i,j}$ is a polynomial of $X_\nu$ satisfying $I_\nu\subsetneqq I_{i,j}$.
Note that the constant term of $f_{i,j}$ equals  $1$ or $-1$.
Hence
\begin{equation}
  d\log(x_i-x_j)\,-\!\!\!\!\sum\limits_{\{i,j\}\subset I_\nu\in\mathcal I\setminus\{L_n\}}
  \!\!\!\!\!A_I d\log{X_\nu}
\end{equation}
has no singularity around the origin. 
\end{proof}
\begin{lemma}\label{lem:sumbase}
Let $\mathcal I$ be a maximal commuting family of $L=\{0,\dots,n-1\}$ and
$\bar b$ is a map of $\mathcal I$ given in Definition~\ref{def:bry}.  Put
\begin{equation}
  x_I:=x_{n_I}-x_{n'_I} \text{ \ with \ } I_{\bar b}=\{n_I,n_I'\}\subset L
\end{equation}
for $I\in\mathcal I$. Moreover, for $i,\,j\in L$ with $i\ne j$, we define 
$I_{i,j}$ the minimal subset in $\mathcal I$
containing both $i$ and $j$.
Then $x_I$ ($I\in\mathcal I$) are linearly independent over $\mathbb C$ and
\begin{equation}\label{eq:sumbase}
x_i-x_j=\sum_{I\in \mathcal I}\epsilon_{i,j}^I x_I\text{ \ with \ }
 \begin{cases}
   \epsilon_{i,j}^I= 0 &(I\supsetneqq I_{i,j}\text{ or }I\cap I_{i,j}=\emptyset),\\
   \epsilon_{i,j}^I \in\{1, -1\}&(I=I_{i,j}),\\
   \epsilon_{i,j}^I \in\mathbb Z&(I\subsetneqq I_{i,j}).
 \end{cases}
\end{equation}
\end{lemma}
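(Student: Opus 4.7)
My plan is to prove the expansion formula \eqref{eq:sumbase} by induction on $|I_{i,j}|$ and then deduce the linear independence of $\{x_I\}_{I \in \mathcal{I}}$ from a dimension count. First I would verify that $I_{i,j}$ is well-defined: since $\mathcal{I}$ is maximal, it contains $L$, so the collection $\{I \in \mathcal{I} \mid \{i,j\} \subset I\}$ is nonempty; the laminated property of Definition~\ref{def:ComSets} forces any two of its members to be comparable by inclusion, so a unique minimal element exists.

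For the base case $|I_{i,j}| = 2$, minimality forces $I_{i,j} = \{i,j\}$ and $(I_{i,j})_{\bar b} = \{i,j\}$ by \eqref{eq:players}, so $x_{I_{i,j}} = \pm(x_i - x_j)$ directly gives the formula with $\epsilon^{I_{i,j}}_{i,j} = \pm 1$ and all other $\epsilon^I_{i,j} = 0$. For the inductive step, write $I_{i,j} = \bar b(I_{i,j}) \sqcup \bar b'(I_{i,j})$. By minimality, $i$ and $j$ lie in opposite halves; relabelling if necessary, suppose $i \in \bar b(I_{i,j})$ and $j \in \bar b'(I_{i,j})$, and let $\{n,n'\} = (I_{i,j})_{\bar b}$ with $n \in \bar b(I_{i,j})$ and $n' \in \bar b'(I_{i,j})$. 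Then
\[
x_i - x_j \;=\; (x_i - x_n) \;+\; x_{I_{i,j}} \;+\; (x_{n'} - x_j).
\]
When $i = n$ the first bracket vanishes; otherwise the minimal set in $\mathcal{I}$ containing $\{i, n\}$ lies inside $\bar b(I_{i,j})$ and hence strictly inside $I_{i,j}$, so the inductive hypothesis expresses $x_i - x_n$ as an integer combination of $x_I$ with $I \subsetneq I_{i,j}$. Treating the third bracket symmetrically yields the stated formula with $\epsilon^{I_{i,j}}_{i,j} = +1$ (or $-1$ in the opposite relabelling), integer coefficients for $I \subsetneq I_{i,j}$, and no contribution from $I$ disjoint from $I_{i,j}$ or strictly larger than $I_{i,j}$, because the recursion introduces only indices contained in $I_{i,j}$.

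For linear independence, observe that each $x_I = x_{n_I} - x_{n'_I}$ is a linear form in $x_0, \ldots, x_{n-1}$ whose coefficients sum to zero, so all $x_I$ lie in the $(n-1)$-dimensional subspace $V := \{\sum_{k=0}^{n-1} c_k x_k \mid \sum_k c_k = 0\}$, which is spanned by the differences $x_i - x_j$. The already-established formula shows $V \subseteq \mathrm{span}_{\mathbb{C}}\{x_I \mid I \in \mathcal{I}\}$, and combined with $|\mathcal{I}| = n - 1 = \dim V$ from Corollary~\ref{cor:numI}(i), this forces $\{x_I\}_{I \in \mathcal{I}}$ to be a basis of $V$, hence linearly independent.

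The main obstacle I anticipate is the sign bookkeeping through the recursion together with the verification that only $I \subseteq I_{i,j}$ ever appear in the expansion. A uniform convention (always orient each sub-bracket so that its first endpoint lies in $\bar b$) sidesteps the sign issue, and the containment claim follows because the inductive step at $I_{i,j}$ only uses indices within $\bar b(I_{i,j}) \cup \bar b'(I_{i,j}) = I_{i,j}$, so the laminated property of Definition~\ref{def:ComSets} prevents any $I \in \mathcal{I}$ disjoint from $I_{i,j}$ or strictly containing it from entering.
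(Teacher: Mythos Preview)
Your proposal is correct and follows essentially the same approach as the paper: induction on $|I_{i,j}|$ via the decomposition $x_i - x_j = (x_i - x_n) + (x_n - x_{n'}) + (x_{n'} - x_j)$ with $\{n,n'\} = (I_{i,j})_{\bar b}$, followed by the same dimension count $|\mathcal I| = n-1 = \dim V$ for linear independence. Your version is slightly more explicit in checking that $I_{i,j}$ is well-defined and that the recursion stays inside $I_{i,j}$, but the argument is the same.
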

\begin{proof}
Put $(I_{i,j})_{\bar b}=\{k,\ell\}$. 
Note that the lemma is clear when $|I_{i,j}|=2$.
We will prove \eqref{eq:sumbase} by the induction on the cardinality of $I_{i,j}$.
We may assume $i\in \bar b(I_{i,j})$ and $j\in\bar b'(I_{i,j})$ by swapping $i$ and $j$
if necessary. 
Similarly we may moreover assume $k\in \bar b(I_{i,j})$ and $\ell\in \bar b'(I_{i,j})$.
Then $i=k$ or $I_{i,k}\subsetneqq I_{i,j}$.
Moreover $j= \ell$ or  $I_{j,\ell}\subsetneqq I_{i,j}$.
Since $x_i-x_j=(x_k-x_\ell)+(x_i-x_k)-(x_j-x_\ell)$, the hypothesis of the induction
proves \eqref{eq:sumbase}.
Since the dimension of $\sum_{i,j\in L}\mathbb C(x_i-x_j)$ equals $n{-}1$ and $|\mathcal I|=n{-}1$,
$x_I$ ($I\in\mathcal I$) are linearly independent.
\end{proof}
\if0
Put $(I_{[i,j]})_{\bar b}=\{k,\ell\}$. 
Note that the lemma is clear when $|I_{i,j}|=2$.
We will prove \eqref{eq:sumbase} by the induction on the cardinality of $I_{[i,j]}$.
We may assume $i\in \bar b(I_{[i,j]})$ and $j\in\bar b'(I_{[i,j]})$ by swapping $i$ and $j$
if necessary. 
Similarly we may moreover assume $k\in \bar b(I_{i,j})$ and $\ell\in \bar b'(I_{[i,j]})$.
Then $i=k$ or $I_{[i,k]}\subsetneqq I_{[i,j]}$.
Moreover $j= \ell$ or  $I_{j,\ell}\subsetneqq I_{[[i,j]]}$.
Since $x_i-x_j=(x_k-x_\ell)+(x_i-x_k)-(x_j-x_\ell)$, the hypothesis of the induction
proves \eqref{eq:sumbase}.
Since the dimension of $\sum_{i,j\in L}\mathbb C(x_i-x_j)$ equals $n{-}1$ and $|\mathcal I|=n{-}1$,
$x_I$ ($I\in\mathcal I$) are linearly independent.
\end{proof}
\fi

\centerline{\bf Examples}
Put $\Omega':=\sum\limits_{i=1}^{n-2}A_{I_i} d\log X_i$ in the theorem.

\medskip
\centerline{$\bf n=4$}

\vspace{1.1cm}\hspace{9.8cm}
\scalebox{0.8}{\begin{tikzpicture}
\draw (-0.5,0)--(1.5,0)  (-0.5,1)--(1.5,1) (0,-0.5)--(0,1.5)  (1,-0.5)--(1,1.5) 
 (-0.5,-0.5)--(1.5,1.5)
node at (1.7,0) {$x$}
node at (0,1.7) {$y$}
node at (-0.3,0.2) {$0$}
node at (0.2,-0.3) {$0$}
node at (-0.3,1.2) {$1$}
node at (1.2,-0.3) {$1$}
;
\end{tikzpicture}}
\vspace{-3.1cm}

\smallskip
$\displaystyle
\Omega=A_{x0}\frac{dx}x+A_{y0}\frac{dy}y+A_{x1}\frac{d(x-1)}{x-1}
+A_{y1}\frac{d(y-1)}{y-1}+A_{xy}\frac{d(x-y)}{x-y}.
$

\vspace{-2mm}
\begin{align*}
(x,y)&=(0,1) : \quad 
\raisebox{-5mm}{
\begin{tikzpicture}
\draw
(0,0)--(0,0.2)--(0.2,0.2) (0.3,0.2)--(0.4,0.2)--(0.4,0)
(0.8,0)--(0.8,0.2)--(0.9,0.2) (1,0.2)--(1.2,0.2)--(1.2,0)
(0.2,0.2)--(0.2,0.4)--(0.6,0.4) (0.7,0.4)--(1,0.4)--(1,0.2)
(0.6,0.4)--(0.6,0.6)
node at (0,-0.2){$0$}
node at (0.4,-0.2){$x$}
node at (0.8,-0.2){$y$}
node at (1.2,-0.2){$1$}
node at (0,-0.5){$x_0$}
node at (0.4,-0.5){$x_1$}
node at (0.8,-0.5){$x_2$}
node at (1.2,-0.5){$x_3$}
;
\end{tikzpicture}}
&&\begin{cases}
x_1-x_0=x=X,\\
x_3-x_2=1-y=Y,
\end{cases}\\
&\phantom{(x,y)=(0,0)}
\begin{cases}
X=x,\\
Y=1-y,
\end{cases}
&&
\begin{cases}
 \frac{dx}x=\frac{dX}X,\\
 \frac{dy}y=\frac{dY}Y,
\end{cases}\\
&\Omega'=A_{x0}\frac{dX}X+A_{y1}\frac{dY}{Y}&& (|x|,\,|y-1|\ll1)
\allowdisplaybreaks\\[2mm]
(x,y)&=(0,0) : 
\quad
\raisebox{-6mm}
{\begin{tikzpicture}
\draw (0,0)--(0,0.2)--(0.2,0.2) (0.3,0.2)--(0.4,0.2)--(0.4,0)
(0.2,0.2)--(0.2,0.4)--(0.4,0.4) (0.5,0.4)--(0.8,0.4)--(0.8,0)
(0.4,0.4)--(0.4,0.6)--(0.6,0.6) (0.7,0.6)--(1.2,0.6)--(1.2,0)
(0.6,0.6)--(0.6,0.8)
node at (0,-0.2){$0$}
node at (0.4,-0.2){$x$}
node at (0.8,-0.2){$y$}
node at (1.2,-0.2){$1$}
node at (0,-0.5){$x_0$}
node at (0.4,-0.5){$x_1$}
node at (0.8,-0.5){$x_2$}
node at (1.2,-0.5){$x_3$}
;
\end{tikzpicture}}
&&\begin{cases}
  x_2-x_0=y=Y,\\
  x_1-x_0=x=XY,\\
  x_2-x_1=y-x=(1-X)Y,
\end{cases}\\
&\phantom{(x,y)=(0,1)}
\begin{cases}
 X=\frac xy,\\
 Y=y,
\end{cases}
&&
\begin{cases}
 \frac{dx}x=\frac{dX}X+\frac{dY}Y,\\
 \frac{d(x-y)}{x-y}=\tfrac{dY}{Y}+\tfrac{d(X-1)}{X-1},
\end{cases}\\
&\Omega'=A_{x0}\frac{dX}{X}+A_{xy0}\frac{dY}{Y},&&
A_{xy0}:=A_{x0}+A_{y0}+A_{xy}\quad(|x|\ll|y|\ll1).
\end{align*}
\begin{remark}
In \cite{MO2024,Oi}, the local coordinate $(X,Y)=(\frac yx,y)$ is used for 
a desingularization of the origin, 
where $(X,Y)$ is in a neighborhood of $(\infty,0)$. 
This coordinate transformation keeps KZ-type equations and the point $(\infty,0)$ is
a normal crossing singular point of the equations.
\end{remark}

\centerline{$\bf n=5$}
\noindent
\begin{align*}
\Omega&=A_{x0}\frac{dx}x+A_{y0}\frac{dy}y+A_{z0}\frac{dz}z+A_{x1}\frac{d(x-1)}{x-1}
  +A_{y1}\frac{d(y-1)}{y-1}+A_{z1}\frac{d(z-1)}{z-1}\\
 &\quad{}+A_{xy}\frac{d(x-y)}{x-y}
  +A_{yz}\frac{d(y-z)}{y-z}+A_{xz}\frac{d(x-z)}{x-z}.
\end{align*}

\begin{align*}
(x,y,z)&=(0,0,1):\ 
\raisebox{-0.7cm}{\begin{tikzpicture}
\draw
(0,0)--(0,0.2)--(0.2,0.2) (0.3,0.2)--(0.4,0.2)--(0.4,0)
(1.2,0)--(1.2,0.2)--(1.3,0.2) (1.4,0.2)--(1.6,0.2)--(1.6,0)
(0.2,0.2)--(0.2,0.4)--(0.4,0.4) (0.5,0.4)--(0.8,0.4)--(0.8,0)
(0.4,0.4)--(0.4,0.6)--(0.8,0.6) (0.9,0.6)--(1.4,0.6)--(1.4,0.2)
(0.8,0.6)--(0.8,0.8)
node at (0,-0.2){$0$}
node at (0.4,-0.2){$x$}
node at (0.8,-0.2){$y$}
node at (1.2,-0.2){$z$}
node at (1.6,-0.2){$1$}
node at (0,-0.5){$x_0$}
node at (0.4,-0.5){$x_1$}
node at (0.8,-0.5){$x_2$}
node at (1.2,-0.5){$x_3$}
node at (1.6,-0.5){$x_4$}
;
\end{tikzpicture}}
&&
\begin{cases}
 x_2-x_0=y=Y,\\
 x_1-x_0=x=XY,\\
 x_4-x_3=1-z=Z,
\end{cases}&&\\
&\quad\begin{cases}
 X=\frac xy\\
 Y=y,\\
 Z=1-z,
\end{cases}
&&\begin{cases}
 x-y=(X-1)Y,\\
 z=1-Z,
\end{cases}\\
&\Omega'=A_{x0}\frac{dX}X+A_{xy0}\frac{dY}{Y}+A_{z1}\frac{dZ}{Z}&&
(|x|\ll |y|\ll 1,\ |z-1|\ll1)\allowdisplaybreaks\\[2mm]
(x,y,z)&=(0,0,0):\ 
\raisebox{-9mm}{
\begin{tikzpicture}
\draw
(0,0)--(0,0.2)--(0.2,0.2) (0.3,0.2)--(0.4,0.2)--(0.4,0)
(0.2,0.2)--(0.2,0.4)--(0.4,0.4) (0.5,0.4)--(0.8,0.4)--(0.8,0)
(0.4,0.4)--(0.4,0.6)--(0.6,0.6) (0.7,0.6)--(1.2,0.6)--(1.2,0)
(0.6,0.6)--(0.6,0.8)--(0.8,0.8) (0.9,0.8)--(1.6,0.8)--(1.6,0)
(0.8,0.8)--(0.8,1)
node at (0,-0.2){$0$}
node at (0.4,-0.2){$x$}
node at (0.8,-0.2){$y$}
node at (1.2,-0.2){$z$}
node at (1.6,-0.2){$1$}
node at (0,-0.5){$x_0$}
node at (0.4,-0.5){$x_1$}
node at (0.8,-0.5){$x_2$}
node at (1.2,-0.5){$x_3$}
node at (1.6,-0.5){$x_4$}
;
\end{tikzpicture}}
&&
\begin{cases}
x_3-x_0=z=Z,\\
x_2-x_0=y=YZ,\\
x_1-x_0=x=XYZ,
\end{cases}\\
&
\qquad
\begin{cases}
X=\frac xy,\\
Y=\frac yz,\\
Z=z,
\end{cases}
&&
\begin{cases}
y-z=(Y-1)Z,\\
x-y=(X-1)YZ,\\
x-z=(XY-1)Z,\\
\end{cases}\\
&\Omega'=A_{x0}\frac{dX}X+A_{xy0}\frac{dY}{Y}+A_{xyz0}\frac{dZ}{Z}
&&(|x|\ll|y|\ll|z|\ll1)\allowdisplaybreaks\\[2mm]
(x,y,z)&=(0,0,0):\ 
\raisebox{-0.9cm}{\begin{tikzpicture}
\draw
(0,0)--(0,0.2)--(0.2,0.2) (0.3,0.2)--(0.4,0.2)--(0.4,0)
(0.8,0)--(0.8,0.2)--(0.9,0.2) (1,0.2)--(1.2,0.2)--(1.2,0)
(0.2,0.2)--(0.2,0.4)--(0.5,0.4) (0.6,0.4)--(1,0.4)--(1,0.2)
(0.6,0.4)--(0.6,0.6)--(0.7,0.6) (0.8,0.6)--(1.6,0.6)--(1.6,0)
(0.8,0.6)--(0.8,0.8)
node at (0,-0.2){$0$}
node at (0.4,-0.2){$x$}
node at (0.8,-0.2){$y$}
node at (1.2,-0.2){$z$}
node at (1.6,-0.2){$1$}
node at (0,-0.5){$x_0$}
node at (0.4,-0.5){$x_1$}
node at (0.8,-0.5){$x_2$}
node at (1.2,-0.5){$x_3$}
node at (1.6,-0.5){$x_4$};
\end{tikzpicture}}
&&
\begin{cases}
 x_3-x_0=z=Z,\\
 x_1-x_0=x=XZ,\\
 x_3-x_2=z-y=YZ,
\end{cases}\\
&\qquad
\begin{cases}
 X=\frac xz,\\
 Y=\frac{z-y}z,\\
 Z=z,
\end{cases}&&
\begin{cases}
 y=(1-Y)Z,\\
 x-y=(X+Y-1)Z,\\
 x-z=(X-1)Z,
\end{cases}\\
&\Omega'=A_{x0}\frac{dX}X+A_{yz}\frac{dY}{Y}+A_{xyz0}\frac{dZ}{Z}
&&(|x|,\,|y-z|\ll|z|\ll1).
\end{align*}

\section{Middle convolutions of KZ-type equations}
\label{sec:midKZ}
The {convolution} $\widetilde{\mathrm{mc}}_{x_0,\mu}\,\mathcal M$ of 
${\mathcal M}$ \eqref{eq:KZ} with ${\mu}\,\in\mathbb C$ is defined by
{\small\begin{align*}
 {\widetilde{\mathcal M}}\,:\,\frac{\p \tilde u}{\p x_i}=\sum_{0\le \nu< n}\!\!\!\frac{\tilde A_{i\nu}}{x_i-x_\nu}\tilde u\quad(0\le i<n),
\end{align*}}
\\[-.8cm]
{\small\begin{align*}
\tilde A_{0k}&=
 \bordermatrix{
 & & &k\cr
 & 0&\cdots&0&\cdots&0\cr
 & \vdots&\cdots&\vdots&\cdots&\vdots\cr
k & A_{01}&\cdots&A_{0k}\,{+\mu}&\cdots&A_{0n-1}\cr
 & \vdots&\cdots&\vdots&\cdots&\vdots\cr
 & 0&\cdots&0&\cdots&0\cr}
\begin{matrix}
\phantom{AAAA}\\[2mm]
\in M\bigl((n-1)N,\mathbb C\bigr)\phantom{AAAA}\\[1mm]
\qquad\text{\small(Dettweiler-Reiter \cite{DR})},
\end{matrix}
\allowdisplaybreaks\\
 \tilde A_{ij}&=
  \bordermatrix{
 & & &i&&j\cr
 & A_{ij}\cr
 &&\ddots\cr
 i&&&A_{ij}\,{+A_{0j}}&&-A_{0j}\\
 &&&&\ddots\cr
 j&&&-A_{0i}&&A_{ij}\,{+A_{0i}}\cr
 &&&&&&\ddots\cr
 &&&&&&&A_{ij}\cr}
\begin{matrix}
\phantom{AAA}\\[2mm]
\in M\bigl((n-1)N,\mathbb C\bigr)\\[1mm]
\quad\text{\quad\small(Haraoka \cite{Ha}).}
\end{matrix}\notag
\end{align*}}
In particular, the compatibility condition of $\mathcal M$ assures that of 
$\widetilde{\mathcal M}$.

We prepare a notation to give the definition of the middle convolution and analyze it.
\begin{definition}\label{def:iota}
For an integer $n$ greater than 1 and a positive integer $N$, we define
\begin{align*}
 L_n^0&=\{1,2,\dots,n{-}1\},\quad L_n=\{0,1,2,\dots,n{-}1\},\\
  {\iota_j(v)}&:={(v)_j}:=\text{\scriptsize $j$}\!\left(\begin{smallmatrix}0\\[-2mm] \vdots\\v\\[-2mm] \vdots\\ 0\end{smallmatrix}\right)\in\mathbb C^{(n-1)N},\quad  
\iota_j:\mathbb C^N\hookrightarrow\mathbb C^{(n-1)N}
\quad(v\in\mathbb C^N,\ j\in L_n^0),\\
 {\iota_I}&:=\sum_{i\in I}\iota_i:\mathbb C^N\hookrightarrow\mathbb C^{(n-1)N},\ 
 {(v)_I}:=\iota_I(v)\in\mathbb C^{(n-1)N}\quad(v\in\mathbb C^N,\ I\subset {L_n^0}),\\
V_I&:=\iota_I(\mathbb C^N)\simeq\mathbb C^N.
\end{align*}
For the equation $\mathcal M$ given by \eqref{eq:KZ}, we define subspaces of $\mathbb C^{(n-1)N}$:
\begin{align*}
 \mathcal K_i&:=\iota_i(\ker\, A_{0i})=
 \text{\scriptsize $i$}\!\left(\begin{smallmatrix}0\\[-2mm] \vdots\\ \!\ker A_{0i}\!\\[-2mm] \vdots\\ 0\end{smallmatrix}\right)\
{\subset\mathbb C^{(n-1)N}},\\
\mathcal K_\infty\,&:=\ker\tilde A_{0\infty}
\overset{\mu\ne0}{=}\iota_{L_n^0}\bigl(\ker 
(A_{0\infty}-\mu)\bigr)=
\left\{\left(\begin{smallmatrix}v\\[-2mm]\vdots\\v\end{smallmatrix}\right)\mid
A_{0\infty}v=\mu v\right\}
{\,\subset\mathbb C^{(n-1)N}},\\
\mathcal K\,&:=\mathcal K_\infty+\bigoplus_{i=1}^n
\mathcal K_i.
\end{align*}
\end{definition}
Here we remark that, if $\mu\ne0$, then 
$\ker\tilde A_{0\infty}=\iota_{L_n^0}\bigl(\ker(A_{0\infty}-\mu))$ and 
$\mathcal K$ is the direct sum of  $\mathcal K_1,\dots,\mathcal K_{n-1}$ and $\mathcal K_\infty$.

Since $\tilde A_I\mathcal K\subset\mathcal K$, $\tilde A_I$ induce linear transformations 
on the quotient space $\mathbb C^{(n-1)N}/\mathcal K$.
These linear transformation are expressed by matrices 
\begin{align}
{\bar A}_I\in M\bigl((n-1)N-\dim\mathcal K,\mathbb C\bigr)
\end{align}
under a base of the quotient space.
\begin{definition}[{\cite{DR},\ \cite{Ha}}]
The {\bf middle convolution} $\overline{\mathcal M}=\mc_{x_0,\mu}\mathcal M$ of $\mathcal M$
is defined by
\[
  \overline{\mathcal M}:\frac{\p \bar u}{\p x_i}=\sum_{\nu\in L_n\setminus\{i\}}\!\!\frac{\bar A_{i\nu}}
 {x_i-x_\nu}\bar u\quad(i\in L_n).
\]
\end{definition}
If the ordinary differential equation with the variable $x_0$ defined by $\mathcal M$
is irreducible and $\mu$ is generic, 
$\mc_{x_0,\mu}\mathcal M$ is also irreducible and it is proved by \cite{DR} that
\begin{equation}
 \mc_{x_0,\mu'}\circ\mc_{x_0,\mu}=\mc_{x_0,\mu+\mu'},\  \mc_{x_0,0}=\mathrm{id}.
\end{equation}
In most cases, the irreducibility of this ODE coincides with 
that of $\mathcal M$ (cf.~\cite{Oir}).


\medskip
When $n=4$, the convolution of $A_{123}=A_{12}+A_{13}+A_{23}$ is
\begin{align*}
\tilde A_{123}&=\left(\begin{matrix}
A_{12}+A_{02} & -A_{02}&0\\
-A_{01} & A_{12}+A_{01}&0\\
0 &0 &A_{12}
\end{matrix}\right)
+
\left(\begin{matrix}
A_{13}+A_{03} & 0 &-A_{03}\\
0      &  A_{13} &0      \\
-A_{01}&0 & A_{13}+A_{01}
\end{matrix}\right)\\
&\quad{}
+
\left(\begin{matrix}
A_{23}& 0 & 0\\
0      &  A_{23}+A_{03} &-A_{03}\\
0      &-A_{02} & A_{23}+A_{02}
\end{matrix}\right)\\
&=
\left(\begin{matrix}
A_{0123}-A_{01}&-A_{02} & -A_{03}\\
-A_{01}  &  A_{0123}-A_{02} &-A_{03}\\
-A_{01}  &-A_{02} & A_{0123}-A_{03}
\end{matrix}\right)\in M(3N,\mathbb C)
.
\end{align*}
Let $m$ be a positive integer smaller than $n$. Then
\begin{align*}
\tilde A_{1\cdots m}&
=\begin{pmatrix}
 A_{0\cdots m}{-}A_{01}\!\!&\cdots&\!-A_{0m}&&&\\
 \vdots&\ddots&\vdots\\
 \!\!-A_{01}&\cdots&\!A_{0\cdots m}{-}A_{0m}&&&\\
 &&&\!\!\!A_{1\cdots m}\!\!\!&&\\
 &&&&\ddots&\\
 &&&&&\!\!\!\!\!A_{1\cdots m}
\end{pmatrix}\in M((n-1)N)
.
\end{align*}
\begin{lemma}\label{lem:I}
Retain the notation in Definition~\ref{def:iota}.
For $I\subset L_n\ \ (|I|>1)$ and  $j,\,k\in L_n^0$ and $v\in\mathbb C^N$, 
we have
\begin{align*}
\tilde A_I(v)_J&=(A_Iv)_J\in V_J
&(I\subset J\subset L_n^0),\\ 
\tilde A_I(v)_j&=(A_{0I}v)_j-(A_{0j}v)_I
 &(I\ni j),\\
\tilde A_I(v)_k&=(A_Iv)_k\in V_j&(I\not\ni k),\\
[\tilde A_I]&=[A_I]_{n-|I|}\cup [A_{0I}]_{|I|-1},\\
\tilde A_I(v)_j&=(A_{0I}v)_j\in\mathcal K_j&(v\in\ker A_{0j},\ I\ni j),\\
\tilde A_I(v)_k&=(A_Iv)_k\in\mathcal K_k&(v\in\ker A_{0k},\ I\not\ni k),\\
\tilde A_I(v)_{L_n^0}&=(A_Iv)_{L_n^0}\in\mathcal K_\infty&(v\in\ker(A_{0\infty}-\mu)). 
\end{align*}
\end{lemma}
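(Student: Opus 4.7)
The plan is to derive all seven identities from the additive decomposition $\tilde A_I=\sum_{\{p,q\}\subset I}\tilde A_{pq}$ together with the explicit Dettweiler--Reiter--Haraoka block formulas for each $\tilde A_{pq}$. The hypotheses $j,k\in L_n^0$ and the occurrence of $A_{0I}$ presuppose that $I\subset L_n^0$, so every pair contributing to $\tilde A_I$ has $p,q\ge 1$ and the block decomposition of $\tilde A_{pq}$ is diagonal with entry $A_{pq}$ outside rows $p$ and $q$.

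I would first establish identities 1--3 by direct computation. Identity 3 is immediate: when $k\notin I$, no pair $\{p,q\}\subset I$ touches row $k$, so $\tilde A_{pq}(v)_k=(A_{pq}v)_k$, and summing gives $(A_Iv)_k$. For identity 1, for each pair $\{p,q\}\subset I\subset J$ the cross-term $-A_{0q}$ at row $p$ acting on $(v)_q$ cancels the extra diagonal contribution $A_{0q}$ at row $p$ acting on $(v)_p$; the symmetric cancellation occurs at row $q$, so $\tilde A_{pq}(v)_J=(A_{pq}v)_J$ and summing gives $(A_Iv)_J$. For identity 2, split the pairs $\{p,q\}\subset I$ by whether they contain $j$: those in $I\setminus\{j\}$ contribute $(A_{pq}v)_j$ at row $j$; those of the form $\{i,j\}$ with $i\in I\setminus\{j\}$ contribute $(A_{ij}v+A_{0i}v)_j+(-A_{0j}v)_i$. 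Summing, and using $A_{0I}=A_I+\sum_{i\in I}A_{0i}$ (valid since $0\notin I$), collects the row-$j$ terms into $(A_{0I}v-A_{0j}v)_j$ and the cross terms into $-(A_{0j}v)_{I\setminus\{j\}}$, with total $(A_{0I}v)_j-(A_{0j}v)_I$.

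For the spectral identity 4, I would build a block-triangular form of $\tilde A_I$. By identity 3 each subspace $V_k$ with $k\in L_n^0\setminus I$ is $\tilde A_I$-invariant with action $A_I$, and by identity 1 applied with $J=I$ the diagonal subspace $V_I\subset\bigoplus_{j\in I}V_j$ is $\tilde A_I$-invariant with action $A_I$; together these give $n-|I|$ copies of $[A_I]$. Fixing any $I'\subset I$ with $|I'|=|I|-1$, the vectors $(v)_j$ for $j\in I'$ descend to a basis of $\bigoplus_{j\in I}V_j/V_I$, and identity 2 yields $\tilde A_I(v)_j\equiv(A_{0I}v)_j\pmod{V_I}$, so the induced map on this quotient is block-diagonal with $|I|-1$ copies of $A_{0I}$. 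Reading off the diagonal blocks of the resulting block-triangular form yields $[\tilde A_I]=[A_I]_{n-|I|}\cup[A_{0I}]_{|I|-1}$.

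Identities 5--7 then follow by combining the formulas above with the commutativity relations of the lemma after Definition~\ref{def:ResMat}. For identity 5, identity 2 collapses to $\tilde A_I(v)_j=(A_{0I}v)_j$ when $A_{0j}v=0$; since $j\in I$ forces $\{0,j\}\subset\{0\}\cup I$ and hence $[A_{0j},A_{0I}]=0$, one has $A_{0j}(A_{0I}v)=0$, placing the image in $\mathcal K_j$. Identity 6 is the same argument with identity 3 and the disjointness $\{0,k\}\cap I=\emptyset$ yielding $[A_{0k},A_I]=0$. For identity 7, identity 1 with $J=L_n^0$ reduces the problem to $(A_{0\infty}-\mu)A_Iv=0$, and the relation $A_{0\infty}=A_{L_n^0}-A_{L_n}$ (following from $A_{0\infty}=-\sum_{i\in L_n^0}A_{0i}$ and $A_{L_n}=A_{L_n^0}+\sum_{i\in L_n^0}A_{0i}$) together with $[A_I,A_{L_n^0}]=[A_I,A_{L_n}]=0$ gives $[A_I,A_{0\infty}]=0$, so the image lies in $\mathcal K_\infty$. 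The only real obstacle is the cross-term bookkeeping in identity 2; once that is in place, everything else is mechanical.
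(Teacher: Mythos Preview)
Your proof is correct and follows essentially the same route as the paper. The paper reduces by symmetry to $I=\{1,\dots,m\}$ and then points to the explicit block form of $\tilde A_{1\cdots m}$ computed just before the lemma; your pairwise summation $\tilde A_I=\sum_{\{p,q\}\subset I}\tilde A_{pq}$ with cross-term bookkeeping is exactly the computation that produces that closed form, and your filtration argument for identity~4 makes explicit what the paper leaves as ``clear from the expression.'' For identities 5--7 both arguments invoke the same commutation relations $[A_{0I},A_{0j}]=[A_I,A_{0k}]=[A_I,A_{0\infty}]=0$.
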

By the symmetry of coordinate $(x_1,\dots,x_n)$, we have only to prove this lemma
in the case $I=\{1,\dots,m\}$, but the lemma is clear by the above expression 
of $A_I$. Note that the last three equalities in the above follow from
the relation $[A_{0I},A_{0j}]=[A_I,A_{0k}]=[A_I,A_{0\infty}]=0$ \  $(j\in I,\ k\not\in I)$.

Since $\tilde A_{0\cdots m}=\tilde A_{1\cdots k}+\tilde A_{01}+\cdots+\tilde A_{0m}$, we have
\begin{align*}
\tilde A_{0\cdots m}&
=\begin{pmatrix}
A_{01\cdots m}+\mu&       &                        &A_{0,m+1} &\cdots&A_{0,n-1}\\
                       & \ddots&                        &\vdots &\cdots&\vdots\\
                       &       &A_{01 \cdots m}+\mu &A_{0,m+1} &\cdots&A_{0,n-1}\\
                                                      &&&A_{1\cdots m}   &      & \\
                                                      &&&       &\ddots&\\
                                                      &&&       && A_{1\cdots m}
\end{pmatrix}
.
\end{align*}
Then $\widetilde A_{0\dots n-1}$ is a block diagonal matrix with the diagonal
element $A_{0\dots n-1}+\mu$.
\begin{lemma}\label{lem:0I} {\rm (i)} \ 
For $I\subset L_n^0$ and $j,\,k \in L_n^0$ and $v\in\mathbb C^N$, we have
\begin{align*}
\tilde A_{0I}(v)_j&=((A_{0I}+\mu)v)_j\in V_{\{j\}}&(I\ni j),\\
\tilde A_{0I}(v)_k&=(A_Iv)_k+(A_{0k}v)_I
 &(I\not\ni k),\\
[\tilde A_{0I}]&=[A_{0I}+\mu]_{n-1-|I|}\cup [A_I]_{|I|},\\
\tilde A_{0I}(v)_j&=((A_{0I}+\mu)v)_j\in\mathcal K_j&(v\in\ker A_{0j},\ I\ni j),\\
\tilde A_{0I}(v)_k&=(A_{I}v)_k\in\mathcal K_k&(v\in\ker A_{0k},\ I\not\ni k),\\
\tilde A_{0I}(v)_{L_n^0}&=(A_{I}v)_{L_n^0}\in\mathcal K_\infty&(v\in\ker(A_{0\infty}-\mu)).
\end{align*}
\end{lemma}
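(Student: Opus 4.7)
The plan is to exploit the additive decomposition
\[
  A_{0I} \;=\; A_I \;+\; \sum_{i\in I} A_{0i},
\]
which follows from Definition~\ref{def:ResMat} by splitting pairs in $\{0\}\cup I$ according to whether $0$ is involved. By linearity of the convolution this lifts to $\tilde A_{0I} = \tilde A_I + \sum_{i\in I}\tilde A_{0i}$, and every assertion of the lemma will then reduce to combining Lemma~\ref{lem:I} with the explicit Dettweiler--Reiter shape of the simple factors $\tilde A_{0i}$.

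First I would verify the two action formulas on $(v)_j$. From the displayed form of $\tilde A_{0k}$, the only nonzero block row is row $k$, where column $\ell$ carries $A_{0\ell}$ with an additional $\mu$ on the $(k,k)$-diagonal; hence $\tilde A_{0i}(v)_j = (A_{0j}v)_i + \delta_{ij}(\mu v)_i$. Summing over $i\in I$ and splitting into the cases $j\in I$ and $j\notin I$, then adding $\tilde A_I(v)_j$ supplied by Lemma~\ref{lem:I}, the cross terms $(A_{0j}v)_I$ either cancel the $-(A_{0j}v)_I$ appearing in Lemma~\ref{lem:I} (when $j\in I$) or survive exactly as the stated $(A_{0k}v)_I$ (when $k\notin I$). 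The spectral identity is then read off from the explicit block upper-triangular matrix $\tilde A_{0\cdots m}$ displayed just before the lemma: the diagonal blocks consist of $|I|$ copies of $A_{0I}+\mu$ indexed by $i\in I$ and $n-1-|I|$ copies of $A_I$ indexed by $k\in L_n^0\setminus I$, and the strict upper-triangularity of the off-diagonal part gives the claimed factorisation of the characteristic polynomial.

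For the two ``easy'' $\mathcal K_*$-invariance statements I would substitute the kernel hypothesis into the action formulas just proved and invoke the commutation relations recorded after Definition~\ref{def:ResMat}. For $v\in\ker A_{0j}$ with $j\in I$, the identity $[A_{0I},A_{0j}]=0$ (valid because $\{0,j\}\subset\{0\}\cup I$) gives $A_{0j}(A_{0I}+\mu)v=0$, so $(A_{0I}+\mu)v\in\ker A_{0j}$. For $v\in\ker A_{0k}$ with $k\notin I$, the cross term $(A_{0k}v)_I$ vanishes outright, and $[A_I,A_{0k}]=0$ (since $I\cap\{0,k\}=\emptyset$) ensures $A_{0k}(A_Iv)=0$.

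The main obstacle is the $\mathcal K_\infty$ case, which requires a global reshuffling rather than a pointwise check. One evaluates
\[
 \tilde A_{0I}(v)_{L_n^0}
  = \sum_{j\in I}\bigl((A_{0I}+\mu)v\bigr)_j
   + \sum_{k\in L_n^0\setminus I}\bigl((A_Iv)_k+(A_{0k}v)_I\bigr),
\]
and must show that this has the uniform form $(w)_{L_n^0}$ with $w=A_Iv$. Equating the $I$-component with the $(L_n^0\setminus I)$-component of the sum reduces, via $A_{0I}-A_I=\sum_{i\in I}A_{0i}$ and $A_{0\infty}=-\sum_{j\in L_n^0}A_{0j}$, precisely to the hypothesis $A_{0\infty}v=\mu v$; the remaining membership $A_Iv\in\ker(A_{0\infty}-\mu)$ then follows from $[A_I,A_{0\infty}]=0$, itself a consequence of $A_{0\infty}=A_{L_n^0}-A_{L_n}$ and the commutation properties of generalized residue matrices. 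This last identity is the delicate point because it requires the $\mu$-shift in the $I$-block and the absence of any shift in the $(L_n^0\setminus I)$-block to conspire into the single scalar relation $A_{0\infty}v=\mu v$.
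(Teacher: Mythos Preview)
Your proposal is correct and follows essentially the same route as the paper: the paper too writes $\tilde A_{0\cdots m}=\tilde A_{1\cdots m}+\tilde A_{01}+\cdots+\tilde A_{0m}$, displays the resulting block upper-triangular matrix, reads off the action formulas and the spectral decomposition, and then isolates the $\mathcal K_\infty$ assertion via the identity $(A_{0I}+\mu)v+\sum_{\nu\in L_n^0\setminus I}A_{0\nu}v=A_Iv-(A_{0\infty}-\mu)v$, exactly as you do. One remark: the multiplicities you read off the block matrix ($|I|$ copies of $A_{0I}+\mu$ and $n-1-|I|$ copies of $A_I$) are the correct ones and are swapped relative to the printed statement, so your sentence ``gives the claimed factorisation'' glosses over a typo in the lemma rather than a flaw in your argument.
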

Here the last equality in the above follows from the relation
\begin{equation}
(A_{0I}+\mu)v+\sum_{\nu\in L_n^0 \setminus I}A_{0\nu}v
 =A_Iv+\Bigr({\sum_{\nu=1}^{n-1}A_{0\nu}}+\mu\Bigl)v=A_Iv-(A_{0\infty}-\mu)v
.
\end{equation}

Let $\mathcal I$ be a maximal commuting family of $L_n$.
We denote by $\mathcal I_0$ the subset of $\mathcal I$ consisting the elements containing $0$.
The elements of $I_0$ are naturally ordered by the inclusion relationship and they are labelled 
as $I_{1,0}\subset I_{2,0}\subset\cdots\subset I_{m,0}=L_n$ with $m=|\mathcal I_||$.

Moreover we put $\mathcal I_k=\{I\in\mathcal I\mid I\subset I_{k,0}\setminus I_{k-1,0}\}$ and 
the elements of $\mathcal I_k$ are labelled as $I_{k,\nu}$ with $0<\nu \le m_k=|\mathcal I_k|$ so that 
$I_{k,\nu}\supset I_{k,\nu'}$ implies $I_{k,\nu}\le I_{k,\nu'}$.
\begin{definition}
We define labels and an order to the elements of 
a maximal commuting family $\mathcal I$ of $L_n=\{0,\dots,n{-}1\}$:
\begin{align*}
\mathcal I_0
   &:=\{I\in\mathcal I\mid 0\in I\}\\
   &=\bigl\{I_{k,0}\mid 1\le k\le m,\ I_{1,0}\subset I_{2,0}\subset \cdots \subset I_{m,0}\},
 \allowdisplaybreaks\\
\mathcal I_k&:=\{I\in\mathcal I\mid I\subset I_{k,0}\setminus I_{k-1,0}\},\\
&=\{I_{k,\nu}\mid \nu=1,\dots,m_k,\ I_{k,\nu}\supset I_{k,\nu'}\text{ or }
  I_{k,\nu}\cap I_{k,\nu'}=\emptyset
\quad(\nu\le\nu')\},\allowdisplaybreaks\\
I_{k,\nu}&\le I_{k',\nu'}\overset{\text{def,}}\Longleftrightarrow k<k' \text{ or }(k=k'\text{ and }\nu \le \nu'),\\
I^{(\ell)}&:=I_{k,\nu}
\text{ with }
\ell=|\{I\in\mathcal I\mid I\le I_{k,\nu}\}|\quad
 (1\le\ell<n).
\end{align*}
\end{definition}
Here the numbers $\ell$ indicate the order of the elements of $\mathcal I$. Namely,
\[
\mathcal I=\{I^{(\ell)}\mid \ell=1,\dots,n{-}1\},\quad I^{(1)}<I^{(2)}<\cdots<I^{(n-1)}.
\]
Thus a maximal commuting family $\mathcal I$ of $L_n$ becomes a totally ordered set.
Here we note that the order is not uniquely determined.
It may be easy to see this order if $\mathcal I$ is expressed by a figure 
of the corresponding tournament of the teams labeled by the elements of $L_n$ where the team 
with the label $0$ is the final winner.
\begin{exmp} 
In the case of the tournament \ 
\raisebox{-.2cm}{\begin{tikzpicture}
\draw[very thick]
(1.6,0)--(1.6,0.4)--(2.0,0.4)
(1.2,0.6)--(2,0.6)--(2,0.4)
(1.2,0.6)--(1.2,0.8)--(2.4,0.8)
(2.4,0.8)--(2.4,1.0)
;
\draw
(2.1,0.4)--(2.2,0.4)--(2.2,0.2)
(0.6,0.4)--(0.6,0.6)--(1.1,0.6)
(2.5,0.8)--(3.4,0.8)--(3.4,0.4)
(0,0)--(0,0.2)--(0.4,0.2)--(0.4,0)
(0.8,0)--(0.8,0.2)--(1.2,0.2)--(1.2,0)
(0.2,0.2)--(0.2,0.4)--(1,0.4)--(1,0.2)
(2,0)--(2,0.2)--(2.4,0.2)--(2.4,0)
(2.8,0)--(2.8,0.2)--(3.2,0.2)--(3.2,0)
(3.6,0)--(3.6,0.2)--(4,0.2)--(4,0)
(3,0.2)--(3,0.4)--(3.8,0.4)--(3.8,0.2)
node at (0,-0.2){$1$}
node at (0.4,-0.2){$2$}
node at (0.8,-0.2){$3$}
node at (1.2,-0.2){$4$}
node at (1.6,-0.2){$0$}
node at (2,-0.2){$5$}
node at (2.4,-0.2){$6$}
node at (2.8,-0.2){$7$}
node at (3.2,-0.2){$8$}
node at (3.6,-0.2){$9$}
node at (4,-0.2){$10$}
node at (2.4,1.1) {\small$\infty$}
;
\end{tikzpicture}}
,
\begin{align*}
\mathcal I_0&=\bigl\{I_{1,0}=\{0,5,6\},\ I_{2,0}=\{0,1,2,3,4,5,6\},
\ I_{3,0}=\{0,1,2,\ldots,9,10\}\bigr\},\\
\mathcal I_1&=\bigl\{I_{1,1}=\{5,6\}\bigr\},\\
\mathcal I_2&=\bigl\{I_{2,1}=\{1,2,3,4\},\  I_{2,2}=\{1,2\},\ I_{2,3}=\{3,4\}\bigr\},\\
\mathcal I_3&=\bigl\{I_{3,1}=\{7,8,9,10\},\ I_{3,2}=\{7,8\},\ I_{3,3}=\{9,10\}\bigr\},
\end{align*}
$I_{1,0}<I_{1,1}<I_{2,0}<I_{2,1}<I_{2,2}<I_{2,3}<I_{3,0}<I_{3,1}<I_{3,2}<I_{3,3}.$
\end{exmp}

Now we recall the map $b^0$ in Definition~\ref{def:bry}. 
The set of teams $b^0(I)$ is not necessarily uniquely determined by a tournament.
But if we identify $b^0(I)$ with the teams failed the game labeled by $I$, 
the definition of $b^0$ corresponds to the tournament figure 
where the winner of each game is indicated.  An example of the figure is
\begin{equation}
\raisebox{-2mm}{\begin{tikzpicture}
\draw[very thick]
(0,0.2)--(0.1,0.2)
(0.8,0.2)--(0.9,0.2)
(2,0.2)--(2.1,0.2)
(2.8,0.2)--(2.9,0.2)
(3.6,0.2)--(3.7,0.2)
(0.2,0.4)--(0.5,0.4)
(2.1,0.4)--(2.2,0.4)
(3.0,0.4)--(3.3,0.4)
(0.6,0.6)--(1.1,0.6)
(2.5,0.8)--(3.4,0.8)
;
\draw
(0.2,0.2)--(0.4,0.2)
(3.8,0.2)--(4,0.2)
(0.6,0.4)--(1,0.4)
(3.4,0.4)--(3.8,0.4)
;
\draw
(1.6,0)--(1.6,0.4)--(2.0,0.4)
(1.2,0.6)--(2,0.6)--(2,0.4)
(1.2,0.6)--(1.2,0.8)--(2.4,0.8)
(2.4,0.8)--(2.4,1.0)
;
\draw
(0,0)--(0,0.2)--(0.1,0.2) (0.4,0.2)--(0.4,0)
(0.8,0)--(0.8,0.2) (1.0,0.2)--(1.2,0.2)--(1.2,0)
(2,0)--(2,0.2) (2.2,0.2)--(2.4,0.2)--(2.4,0)
(2.8,0)--(2.8,0.2) (3.0,0.2)--(3.2,0.2)--(3.2,0)
(3.6,0)--(3.6,0.2)--(3.7,0.2) (4,0.2)--(4,0)
(0.2,0.2)--(0.2,0.4)--(0.5,0.4) (1,0.4)--(1,0.2)
(2.2,0.4)--(2.2,0.2)
(3,0.2)--(3,0.4)--(3.3,0.4) (3.8,0.4)--(3.8,0.2)
(0.6,0.4)--(0.6,0.6)--(1.1,0.6)
(2.5,0.8)--(3.4,0.8)--(3.4,0.4)
node at (0,-0.2){$1$}
node at (0.4,-0.2){$2$}
node at (0.8,-0.2){$3$}
node at (1.2,-0.2){$4$}
node at (1.6,-0.2){$0$}
node at (2,-0.2){$5$}
node at (2.4,-0.2){$6$}
node at (2.8,-0.2){$7$}
node at (3.2,-0.2){$8$}
node at (3.6,-0.2){$9$}
node at (4,-0.2){$10$}
node at (2.4,1.1) {\small$\infty$}
;
\end{tikzpicture}}
\end{equation}

Here $n=11$ and the corresponding $b^0(I^{(1)}),b^0(I^{(2)}),\ldots, b^0(I^{(n-1)})$ are 
\begin{equation*}
\underset{6}{\overset{\{0,5,6\}}{\{5,6\}}},\,
\underset{5}{\overset{\{5,6\}}{\{5\}}},\,
\underset{4}{\overset{\{0,1,2,3,4,5,6\}}{\{1,2,3,4\}}},\,
\underset{2}{\overset{\{1,2,3,4\}}{\{1,2\}}},\,
\underset{1}{\overset{\{1,2\}}{\{1\}}},\,
\underset{3}{\overset{\{3,4\}}{\{3\}}},\,
\underset{10}{\overset{\{0,1,\dots,10\}}{\{7,8,9,10\}}},
\underset{8}{\overset{\{7,8,9,10\}}{\{7,8\}}},\,
\underset{7}{\overset{\{7,8\}}{\{7\}}},\,
\underset{9}{\overset{\{9,10\}}{\{9\}}}
\end{equation*}
and the number under the set $b^0(I^{(i)})$ means that the corresponding 
team failed in the game $I^{(i)}$.
\medskip
\begin{definition}
Define subspaces of $\mathbb C^{(n-1)N}$ as follows (cf.~Definitions~\ref{def:bry}, 
\ref{def:iota}).
\begin{align*}
  W^{(\ell)}&:=W_I:= \sum_{\nu=1}^\ell 
    V_{b^0(I^{(\nu)})} \quad(I=I^{(\ell)}\in\mathcal I),\\
  U_{b^0(\mathcal I)}&:=\bigl(U_{ij}\bigr)_{\substack{1\le i<n-1\\ 1\le j\le n-1}}
    \in GL((n-1)N,\mathbb C)\quad\text{with}\\
  &\qquad U_{ij}=\begin{cases}
           E_N&(j\in b^0(I^{(i)})),\\
           O_N&(j\not\in b^0(I^{(i)})).
           \end{cases}
\end{align*}
Here $E_N$ is the identity matrix and $O_N$ is
the zero matrix in $M(N,\mathbb C)$.
\end{definition}

\begin{lemma}\label{lem:main}
{\rm (i)} \ 
For $0\not\in I\in\mathcal I$, we have
\begin{align}
 V_I&\subset W_I. \label{eq:W1}\\
 V_{\bar b(I)},\, V_{\bar b'(I)}&\subset W_I.\label{eq:W2}
\end{align}

{\rm (ii)} \ $\dim W^{(\ell)}=\ell N$ \quad$(\ell=1,\dots,n{-}1)$

{\rm (iii)} \ 
Let $I,\,K\in \mathcal I$ and $v\in\mathbb C^N$. Putting $J=b^0(K)$,
we have the following.
\begin{align*}
\intertext{If $0\not\in I$, then}
  \tilde A_I (v)_J&=
   \begin{cases}
       (A_I v)_J&(I\not\supset K),\\
       (A_{0I} v)_J - \bigl(\sum\limits_{\nu\in J}A_{0\nu}v\bigr)_I\phantom{+++11\,}&(I\supset K).
   \end{cases}
\intertext{If $0\in I$, then} 
  \tilde A_I (v)_J&=
   \begin{cases}
       \bigl((A_I+\mu) v\bigr)_J&(I\supset K),\\
       (A_{I\setminus\{0\}} v)_J+\bigl(\sum\limits_{\nu\in J}A_{0\nu}v\bigr)_{I\setminus\{0\}}
          &(I\not \supset K).
   \end{cases}
\end{align*}

{\rm (iv)} \ $\tilde A_I W^{(\ell)}\subset W^{(\ell)}\quad (I\in \mathcal I,\ 1\le \ell<n)$. 
\end{lemma}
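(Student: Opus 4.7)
I plan to prove the four parts in the order (i), (iii), (iv), (ii), since (iii) and (iv) are the core residue-matrix calculations, while (ii) reduces to a separate combinatorial fact.

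For (i), I would induct on the position $\nu$ of $I=I_{k,\nu}$ within its subfamily $\mathcal I_k$ (for the unique $k\ge1$ with $0\notin I_{k,\nu}$), using the preliminary observation $I_{k,1}=I_{k,0}\setminus I_{k-1,0}=b^0(I_{k,0})$, which holds because $I_{k-1,0}$ is the $0$-side child of $I_{k,0}$. The base case $\nu=1$ is then immediate: $V_{I_{k,1}}=V_{b^0(I_{k,0})}$ is by construction a summand of $W_{I_{k,0}}\subset W_{I_{k,1}}$. For $\nu\ge2$, let $J$ be the parent of $I$ in the forest $\mathcal I_k$ and $I'=J\setminus I\in\widetilde{\mathcal I}$ its sibling; since $b^0(J)\in\{I,I'\}$, either $V_I=V_{b^0(J)}\subset W_J\subset W_I$ directly, or $V_{I'}=V_{b^0(J)}\subset W_J$ and the identity $(v)_I=(v)_J-(v)_{I'}$ combined with the inductive hypothesis $V_J\subset W_J$ yields $V_I\subset W_I$. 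The second assertion of (i) follows at once: one of $\{\bar b(I),\bar b'(I)\}$ equals $b^0(I)$ and lies in $W_I$ by construction, while the other equals $I\setminus b^0(I)$ and so lies in $V_I+V_{b^0(I)}\subset W_I$.

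For (iii), I would decompose $(v)_J=\sum_{j\in J}(v)_j$ and apply Lemmas~\ref{lem:I} and \ref{lem:0I} termwise. A preliminary case analysis using the max-commuting property of $\mathcal I$ shows that, for $I,K\in\mathcal I$ with $J=b^0(K)$, the condition $I\supset K$ is equivalent to $I\supset J$; when $I\subsetneq K$ the only possibilities are $I\subset J$ or $I\cap J=\emptyset$; and $0\in I\subsetneq K$ forces $I\cap J=\emptyset$ because $0\notin J$. The disjoint and containing cases give the claimed formulas directly by summation. The subtle case $0\notin I\subsetneq K$ with $I\subset J$ uses the identity $A_{0I}=A_I+\sum_{j\in I}A_{0j}$: the extra terms $-(A_{0j}v)_I$ over $j\in I\subset J$ telescope with the diagonal piece $(A_{0I}v)_I$ to produce $(A_Iv)_I$, recovering the clean expression $\tilde A_I(v)_J=(A_Iv)_J$.

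For (iv), fix $I\in\mathcal I$ and a summand $V_{b^0(K)}$ of $W^{(\ell)}$ with $K=I^{(i)}$, $i\le\ell$. By (iii), $\tilde A_I V_{b^0(K)}$ lies in $V_{b^0(K)}\subset W^{(\ell)}$ together with at most one extra term: in the case $0\notin I$ and $I\supset K$ this extra term lies in $V_I$, and in the case $0\in I$ and $I\not\supset K$ it lies in $V_{I\setminus\{0\}}$. In the first case, the constraint $0\notin I\supset K$ forces $I$ and $K$ to lie in the same $\mathcal I_k$ with $I$ having the smaller label, so $I$'s order-index is at most $i\le\ell$ and (i) gives $V_I\subset W_I\subset W^{(\ell)}$. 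In the second case, write $I=I_{k_I,0}$ and use the iterated decomposition $I\setminus\{0\}=J_1\sqcup\cdots\sqcup J_{k_I}$ with $J_k=b^0(I_{k,0})$ to express $V_{I\setminus\{0\}}=\sum_k V_{J_k}$, each summand being part of $W_{I_{k,0}}\subset W_I$; meanwhile $I\not\supset K$ forces $K\in\mathcal I_{k'}$ for some $k'>k_I$, whence $i$ exceeds the order-index of $I$ and $W_I\subset W^{(\ell)}$.

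Finally, (ii) is equivalent to the linear independence in $\mathbb R^{L_n^0}$ of the indicator vectors $\{1_{b^0(I)}:I\in\mathcal I\}$, i.e.\ to the invertibility of $U_{b^0(\mathcal I)}$. I would prove by induction on $|L|$ the following slightly stronger statement: for any max commuting family $\mathcal I'$ on $L'$ and any choice of $b':\mathcal I'\to\widetilde{\mathcal I'}$ admitting a distinguished winner $w\in L'$ (an element of no $b'(I)$), the vectors $\{1_{b'(I)}:I\in\mathcal I'\}$ are independent in $\mathbb R^{L'\setminus\{w\}}$. The inductive step splits $L'$ along the root game: restriction of a linear relation to the side not containing $w$ kills one half of the coefficients by the induction hypothesis, and evaluation of the residual relation on the other side at the sub-winner determined by $b'$ on the other subfamily isolates the coefficient of the root as zero and reduces the rest to an analogous smaller problem. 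The main obstacle I anticipate is the careful bookkeeping in the second case of (iv), where the total order on $\mathcal I$ and the containment partial order on subsets of $L_n$ must be reconciled to conclude that the order-index of $I$ is $\le\ell$.
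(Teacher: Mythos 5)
Your proposal is correct, and for parts (i), (iii) and (iv) it follows the same route as the paper: the same induction down each subtree $\mathcal I_k$ for (i) (the paper phrases your two subcases as a single appeal to \eqref{eq:W2} for the parent $J$), and for (iii)--(iv) the termwise application of Lemmas~\ref{lem:I} and \ref{lem:0I} to $(v)_J=\sum_{j\in J}(v)_j$ together with the order bookkeeping that places the extra terms $V_I$ resp.\ $V_{I\setminus\{0\}}$ inside $W_I\subset W^{(\ell)}$ --- the paper states this in one line, so your telescoping identity $A_{0I}=A_I+\sum_{j\in I}A_{0j}$ and the comparison of the containment order with the total order are exactly the details it leaves implicit. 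The genuine divergence is in (ii): the paper avoids any independence argument by observing that $\sum_{k,\nu}V_{b^0(I_{k,\nu})}$ contains every $V_{\{\nu\}}$ (via \eqref{eq:W2}), hence spans $\mathbb C^{(n-1)N}$, and since there are exactly $n-1$ summands each of dimension at most $N$ the sum must be direct; your induction on the root game proving independence of the indicator vectors $1_{b^0(I)}$ is correct but does by hand what the dimension count gives for free (on the other hand it isolates the purely combinatorial content, the invertibility of $U_{b^0(\mathcal I)}$, which the paper also needs for Theorem~\ref{thm:main}(ii)). One small inaccuracy to fix in (iii): the asserted equivalence ``$I\supset K\iff I\supset J$'' fails when $I=J=b^0(K)\in\mathcal I$, where $I\supset J$ but $I\not\supset K$; this is harmless because your case analysis treats $I\subset J$ (which includes $I=J$) separately under $I\not\supset K$, but the preliminary claim should be weakened to the implication $I\supset K\Rightarrow I\supset J$.
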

\begin{proof}
Putting $I=I_{k,i}$, we will first prove \eqref{eq:W1} and \eqref{eq:W2} 
by the induction with respect to $i$.
Since $I=\bar b(I)\sqcup\bar b'(I)$, we remark that \eqref{eq:W2} follows from \eqref{eq:W1}.
When $i=1$, \eqref{eq:W1} is clear because $b^0(I_{k,0})=I_{k,1}$.
When $i>1$, there exists $J\in \mathcal I$ satisfying $I\subset J\in\mathcal I_k$ and 
$I\in b(J)$ (cf.~\eqref{eq:bI}). Then $V_I\subset W_J\subset W_I$ by the induction hypothesis for \eqref{eq:W2}
with replacing $I$ by $J$.

Hence
\begin{align*}
 \sum_{k=1}^m\sum_{\nu=0}^{m_k} V_{b^0(I_{k,\nu})}
 &=\sum_{k=1}^m\sum_{\nu=0}^{m_k} W_{I_{k,\nu}}
 =\sum_{k=1}^m\Bigl(V_{b^0(I_{k,0})}
 +\sum_{\nu=1}^{m_k}(V_{\bar b(I_{k,i})}+V_{\bar b'(I_{k,i})})\Bigr)\\
 &
 \supset \sum_{\nu=1}^{n-1} V_{\{\nu\}}\simeq\mathbb C^{(n-1)N}
.
\end{align*}
Since $\sum\limits_{k=1}^m(m_k+1)=n-1$, 
$\sum\limits_{k=1}^m\sum\limits_{\nu=0}^{m_k} V_{b^0(I_{k,\nu})}$ is a direct sum
decomposition of $\mathbb C^{(n-1)N}$, which implies (ii).

Then Lemma~\ref{lem:I} and  Lemma~\ref{lem:0I} show (iii).
In particular we have (iv). 
\end{proof}

\begin{definition}\label{def:md}
Let $L$ be a finite set.
For $i\in L$ and nonempty subsets $I$ and $J$ of $ L$, we define
\[
 \md_{i,J}(I):=\begin{cases}
  I\cup\{i\}      &(I\supset J),\\
  I\setminus\{i\} &(I\not\supset J),
\end{cases}\quad
 \me_{i,J}(I):=\begin{cases}
  1 &(i\in I\supset J),\\
  0 &(i\not\in I\text{ or }I\not\supset J).
\end{cases}
\]
\end{definition}
\begin{remark} (i) \ If \ $i\in I\supset J$\ or \ $i\not\in I\not\supset J$, then 
\ $\md_{i,J}(I)=I$.

(ii) Let $K=I^{(\ell)}\in\mathcal I$.  Then $\widetilde A_I$ induces a linear transformation
on the quotient space $W^{(\ell)}/W^{(\ell-1)}\simeq V_{b^0(K)}\simeq \mathbb C^N$, which
is identified with $A_I^K$ given by \eqref{eq:AJK}.

(iii) \ If the elements of a family $\{I_\nu\mid\nu=0,1,\dots,r\}$ of subsets of $L$
commute with each other (cf.~\eqref{def:ComSets}) and $|I_\nu|>1$ $(\nu=1,\dots,r)$, 
so do the elements of $\{I_0\cup\{i\}\}\cup\{\md_{i,I_0}(I_\nu)\mid(\nu=1,\dots,r)\}$.
\end{remark}

\medskip
We have the following theorem from 
Lemma~\ref{lem:I}, Lemma~\ref{lem:0I} and Lemma~\ref{lem:main}.

\begin{theorem}\label{thm:main} 
{\rm (i)} \ Let $\mathcal I$ be a maximal commuting family of $L_n=\{0,1,\dots,n-1\}$.
Put $\mathcal I=\{I^{(1)},\ldots,I^{(n-1)}\}$.
Let $I\in\mathcal I$. 
Then under the notation in Definition~\ref{def:md}, we have
\begin{align*}
[\tilde A_{I}]&=
     [A_{I\cup\{0\}}+\mu]_{|I|-1} \sqcup [A_{I\setminus\{0\}}]_{n-|I|}
\allowdisplaybreaks,\\
[\tilde A_{I^{(1)}}:\cdots:\tilde A_{I^{(n-1)}}]&=
\bigsqcup_{J\in\mathcal I}[A_{I^{(1)}}^J:\dots:A_{I^{(n-1)}}^J],\\
[\tilde A_{I^{(1)}}:\cdots:\tilde A_{I^{(n-1)}}]|_{\mathcal K_j}
  &=[A_{I^{(1)}}^{\{j\}}:\cdots:A_{I^{(n-1)}}^{\{j\}}]|_{\ker A_{0j}}
   \quad(j\in L_n^0),\\
[\tilde A_{I^{(1)}}:\cdots:\tilde A_{I^{(n-1)}}]|_{\mathcal K_\infty}
&=[A_{I^{(1)}}^{L_n}:\cdots:A_{I^{(n-1)}}^{L_n}]|_{\ker(A_{0\infty}-\mu)}
\end{align*}
with denoting 
\begin{equation}\label{eq:AJK}
A_I^K := A_{\md_{0,K}(I)}+\me_{0,K}(I)\cdot\mu.
\end{equation}

{\rm (ii)} \ 
By a conjugation, 
$\widetilde A_{I^{(i)}}$ are simultaneously changed into 
upper triangular block matrices 
$U^{-1}_{b^0(\mathcal I)}\widetilde A_{I^{(i)}}U_{b^0(\mathcal I)}$. 
\end{theorem}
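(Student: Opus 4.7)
The plan is to derive Theorem~\ref{thm:main} essentially as a corollary of Lemma~\ref{lem:main}, with Lemmas~\ref{lem:I} and~\ref{lem:0I} supplying the local action formulas that identify the induced operators.

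First I would establish the formula $[\tilde A_I]=[A_{I\cup\{0\}}+\mu]_{|I|-1}\sqcup[A_{I\setminus\{0\}}]_{n-|I|}$ by a direct case split. If $0\notin I$, Lemma~\ref{lem:I} gives $[\tilde A_I]=[A_I]_{n-|I|}\cup[A_{0I}]_{|I|-1}$, and since $I\setminus\{0\}=I$ and $I\cup\{0\}=0I$, this is the claim. If $0\in I$, write $I=\{0\}\sqcup I'$ with $I'=I\setminus\{0\}$; Lemma~\ref{lem:0I} gives $[\tilde A_{0I'}]=[A_{0I'}+\mu]_{n-1-|I'|}\cup[A_{I'}]_{|I'|}$, which matches since $|I|-1=|I'|$ and $n-|I|=n-1-|I'|$.

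Next I would handle part (ii) and the joint spectrum in parallel. By Lemma~\ref{lem:main}(iv), the chain $0=W^{(0)}\subset W^{(1)}\subset\cdots\subset W^{(n-1)}=\mathbb C^{(n-1)N}$, with $\dim W^{(\ell)}=\ell N$ (part (ii) of the lemma), is preserved by every $\tilde A_{I^{(i)}}$. The matrix $U_{b^0(\mathcal I)}$ is precisely the change-of-basis sending the standard block filtration of $\mathbb C^{(n-1)N}$ to the filtration $W^{(\ell)}$, because the $\ell$-th block of columns of $U_{b^0(\mathcal I)}$ spans $V_{b^0(I^{(\ell)})}$, and Lemma~\ref{lem:main}(i)--(ii) shows that these $n-1$ blocks give a direct-sum complement to the preceding steps of the filtration. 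Hence conjugation by $U_{b^0(\mathcal I)}$ simultaneously brings all $\tilde A_{I^{(i)}}$ to upper triangular block form, establishing (ii).

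For the joint spectral formula, I would compute the induced action of $\tilde A_I$ on the graded piece $W^{(\ell)}/W^{(\ell-1)}\cong V_{b^0(I^{(\ell)})}\cong\mathbb C^N$. Setting $K=I^{(\ell)}$ and $J=b^0(K)$, Lemma~\ref{lem:main}(iii) together with Lemmas~\ref{lem:I}--\ref{lem:0I} gives that the induced operator, modulo the ``extra'' terms $(\sum_{\nu\in J}A_{0\nu}v)_I$ (respectively $(\sum_{\nu\in J}A_{0\nu}v)_{I\setminus\{0\}}$) which live in $W^{(\ell-1)}$ by \eqref{eq:W1}--\eqref{eq:W2}, acts on $V_J$ as $A_{\md_{0,K}(I)}+\me_{0,K}(I)\mu=A_I^K$. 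Matching the four sub-cases ($0\in I$ vs $0\notin I$, $I\supset K$ vs $I\not\supset K$) against Definition~\ref{def:md} is the principal calculation: this is the step I expect to be the main obstacle, because it is where one has to verify that the extra terms really do lie in $W^{(\ell-1)}$ (which requires the precise labelling of $\mathcal I$ and the monotonicity of $b^0$ along the ordering $I^{(1)}<\cdots<I^{(n-1)}$). Once this identification is in place, block-triangularity forces the joint spectrum to decompose as the disjoint union $\bigsqcup_{J\in\mathcal I}[A_{I^{(1)}}^J:\cdots:A_{I^{(n-1)}}^J]$.

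Finally, I would treat the restrictions to $\mathcal K_j$ and $\mathcal K_\infty$ directly. These are $\tilde A_I$-invariant subspaces (by the last three equalities of Lemmas~\ref{lem:I} and~\ref{lem:0I}, which hinge on the commutation relations $[A_{0I},A_{0j}]=[A_I,A_{0k}]=[A_I,A_{0\infty}]=0$). On $\mathcal K_j=\iota_j(\ker A_{0j})$ the same case split as before identifies $\tilde A_I|_{\mathcal K_j}$ with $A_I^{\{j\}}|_{\ker A_{0j}}$; on $\mathcal K_\infty=\iota_{L_n^0}(\ker(A_{0\infty}-\mu))$ it identifies $\tilde A_I|_{\mathcal K_\infty}$ with $A_I^{L_n}|_{\ker(A_{0\infty}-\mu)}$. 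Taking joint spectra on these invariant subspaces gives the last two equalities of (i), completing the theorem.
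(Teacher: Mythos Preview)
Your proposal is correct and follows the same route as the paper, which simply records that the theorem follows from Lemmas~\ref{lem:I}, \ref{lem:0I} and~\ref{lem:main}. You have spelled out the argument in more detail than the paper does; in particular, the verification you flag as the main obstacle---that the ``extra'' terms $(\sum_{\nu\in J}A_{0\nu}v)_I$ and $(\sum_{\nu\in J}A_{0\nu}v)_{I\setminus\{0\}}$ lie in $W^{(\ell-1)}$---is indeed the crux, and it goes through by the inductive proof of \eqref{eq:W1}: for $0\notin I=I^{(\ell')}$ that proof actually gives the sharper inclusion $V_I\subset W^{(\ell'-1)}$, while for $0\in I=I_{k,0}$ one decomposes $(v)_{I\setminus\{0\}}=\sum_{k'\le k}(v)_{b^0(I_{k',0})}\in W_{I_{k,0}}$.
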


\begin{remark}\label{rem:omit}
(i) \ 
In the above theorem, we have
\begin{equation}
 A_{L_n}^J=A_{0\cdots n-1}^j=\mu+A_{0\cdots n-1}\text{ \ and  \ }
 A_{L_n}^{L_n}=A_{L_n^0}
\end{equation}
and therefore we often omit the term $\tilde A_{L_n}$ in $[\tilde A_{I^{(1)}}:\cdots:\tilde A_{I^{(n-1)}}]$.

Moreover we remark \[A_{0\infty}=A_{L_n^0}-A_{L_n}\] and Corollary~\ref{cor:kappa} and 
\begin{equation}
  \mc_{x_0,\mu} = \Ad\bigl((x_p-x_q)^{-\lambda}\bigr)\circ \mc_{x_0,\mu}\circ 
  \Ad\bigl((x_p-x_q)^\lambda\bigr)\quad(1\le p<q\le n{-}1).
\end{equation}

(ii) \ 
The generalized Riemann scheme
\begin{equation}
  \bigl\{[\bar A_{ij}]\mid \{i,j\}\subset \tilde L_n\bigr\}
\end{equation}
of the residue matrices $\mathrm{mc}_{x_0,\mu}\mathcal M$ is obtained 
by Theorem~\ref{thm:main} and $\Sp\mathcal M$.
\end{remark}

We examine the relation between the theorem and tournaments.
We write $\mathcal I$ by the corresponding tournament and the team with label $i$
by $(i)$. The theorem gives the transformation of $\Sp\mathcal M$ using $A_I^K$ in
\eqref{eq:AJK} for $I\in\mathcal I$. 
The corresponding transformation $\md_{x_0,I}^K$ of the tournament $\mathcal I$ 
is an insertion of (0) after the deletion of (0). The deleted game is replaced by 
the preceding game.  The inserted game which is the first game of (0) 
is as follows.
\begin{itemize}
\item
$(A_I^J)_{I\in\mathcal I}$ : the new game is played with the winner of 
the game $J$ and the preceding game of the opponent is replaced by the new game. 

\item
$(A_I^j|_{\ker A_{0j}})_{I\in\mathcal I}$ : the new game is played with ($j$) 
as a basic insertion.
\item
$(A_I^\infty|_{\ker(A_{0\infty}-\mu)})_{I\in\mathcal I}$ : the new game is played with
the winner of the tournament as a top insertion.  Here we put $A_I^\infty=A_I^{L_n}$.
\end{itemize}
We show the above by diagrams using examples: 
\\[-12mm]

\[
\begin{tikzpicture}
\draw[densely dotted]
(1.6,0)--(1.6,0.4)--(2.0,0.4)
;
\draw
(1.2,0.6)--(2,0.6)--(2,0.4)
(1.2,0.6)--(1.2,0.8)--(2.4,0.8)
(2.4,0.8)--(2.4,1.0)
(2.0,0.4)--(2.2,0.4)--(2.2,0.2)
(0.6,0.4)--(0.6,0.6)--(1.2,0.6)
(2.4,0.8)--(3.4,0.8)--(3.4,0.4)
(0,0)--(0,0.2)--(0.4,0.2)--(0.4,0)
(0.8,0)--(0.8,0.2)--(1.2,0.2)--(1.2,0)
(0.2,0.2)--(0.2,0.4)--(1,0.4)--(1,0.2)
(2,0)--(2,0.2)--(2.4,0.2)--(2.4,0)
(2.8,0)--(2.8,0.2)--(3.2,0.2)--(3.2,0)
(3.6,0)--(3.6,0.2)--(4,0.2)--(4,0)
(3,0.2)--(3,0.4)--(3.8,0.4)--(3.8,0.2)
node at (0,-0.2){$1$}
node at (0.4,-0.2){$2$}
node at (0.8,-0.2){$3$}
node at (1.2,-0.2){$4$}
node at (1.6,-0.2){$0$}
node at (2,-0.2){$5$}
node at (2.4,-0.2){$6$}
node at (2.8,-0.2){$7$}
node at (3.2,-0.2){$8$}
node at (3.6,-0.2){$9$}
node at (4,-0.2){$10$}
node at (2.4,1.1) {\small$\infty$}
node at (1,0.3) {$\bullet$}
node at (2,0.4) {$\bullet$}
node at (1.35,0.3) {$\leftarrow$}
;
\draw
(0.2,0.2)--(0.2,0.4)--(1,0.4)
(2,0.4)--(2.2,0.4)--(2.2,0.2)
;
\end{tikzpicture}
\quad\raisebox{4mm}{$\xrightarrow[J=\{3,4\}]{\md_{0,J}}$}\quad
\begin{tikzpicture}
\draw[densely dotted]
(0.8,0.2)--(1.2,0.2);
\draw[very thick]
(1,0.2)--(1,0.4)--(1.6,0.4)--(1.6,0);
\draw
(0.8,0)--(0.8,0.2)  (1.2,0.2)--(1.2,0)
(0,0)--(0,0.2)--(0.4,0.2)--(0.4,0)
(2,0)--(2,0.2)--(2.4,0.2)--(2.4,0)
(2.8,0)--(2.8,0.2)--(3.2,0.2)--(3.2,0)
(3.6,0)--(3.6,0.2)--(4,0.2)--(4,0)
(3,0.2)--(3,0.4)--(3.8,0.4)--(3.8,0.2)
(0.2,0.2)--(0.2,0.6)--(1.2,0.6)--(1.2,0.4)
(0.8,0.6)--(0.8,0.8)--(2.2,0.8)--(2.2,0.2)
(1.2,0.8)--(1.2,1)--(3.4,1)--(3.4,0.4)
(2,1)--(2,1.2)
node at (0,-0.2){$1$}
node at (0.4,-0.2){$2$}
node at (0.8,-0.2){$3$}
node at (1.2,-0.2){$4$}
node at (1.6,-0.2){$0$}
node at (2,-0.2){$5$}
node at (2.4,-0.2){$6$}
node at (2.8,-0.2){$7$}
node at (3.2,-0.2){$8$}
node at (3.6,-0.2){$9$}
node at (4,-0.2){$10$}
node at (2.2,0.4){$\bullet$}
node at (2,1.3) {\small$\infty$}
;
\end{tikzpicture}\\[-3.7mm]
\]
$J={\{3,4\}}:\{3,4\}\to\{0,3,4\},\ \{1,2,3,4\}\to\{0,1,2,3,4\},\ 
\{0,5,6\}\to\{5,6\}$

\[
%
\begin{tikzpicture}
\draw[densely dotted]
(1.6,0)--(1.6,0.4)--(2.0,0.4)
;
\draw
(1.2,0.6)--(2,0.6)--(2,0.4)
(1.2,0.6)--(1.2,0.8)--(2.4,0.8)
(2.4,0.8)--(2.4,1.0)
(2.0,0.4)--(2.2,0.4)--(2.2,0.2)
(0.6,0.4)--(0.6,0.6)--(1.2,0.6)
(2.4,0.8)--(3.4,0.8)--(3.4,0.4)
(0,0)--(0,0.2)--(0.4,0.2)--(0.4,0)
(0.8,0)--(0.8,0.2)--(1.2,0.2)--(1.2,0)
(0.2,0.2)--(0.2,0.4)--(1,0.4)--(1,0.2)
(2,0)--(2,0.2)--(2.4,0.2)--(2.4,0)
(2.8,0)--(2.8,0.2)--(3.2,0.2)--(3.2,0)
(3.6,0)--(3.6,0.2)--(4,0.2)--(4,0)
(3,0.2)--(3,0.4)--(3.8,0.4)--(3.8,0.2)
node at (0,-0.2){$1$}
node at (0.4,-0.2){$2$}
node at (0.8,-0.2){$3$}
node at (1.2,-0.2){$4$}
node at (1.6,-0.2){$0$}
node at (2,-0.2){$5$}
node at (2.4,-0.2){$6$}
node at (2.8,-0.2){$7$}
node at (3.2,-0.2){$8$}
node at (3.6,-0.2){$9$}
node at (4,-0.2){$10$}
node at (2.4,1.1) {\small$\infty$}
node at (2,0.4) {$\bullet$}
node at (1.2,0.7) {$\bullet$}
node at (1.35,0.4) {$\nwarrow$}
;
\draw
(0.2,0.2)--(0.2,0.4)--(1,0.4)
(2,0.4)--(2.2,0.4)--(2.2,0.2)
;
\end{tikzpicture}
\quad\raisebox{4mm}{$\xrightarrow[J=\{0,1,2,3,4,5,6\}]{\md_{0,J}}$}\quad
\begin{tikzpicture}
\draw[very thick]
(0,0)--(0,0.8)--(1.4,0.8)--(1.4,0.6);
\draw[densely dotted]
(1,0.6)--(2.2,0.6);
\draw
(0.4,0)--(0.4,0.2)--(0.8,0.2)--(0.8,0)
(1.2,0)--(1.2,0.2)--(1.6,0.2)--(1.6,0)
(2,0)--(2,0.2)--(2.4,0.2)--(2.4,0)
(2.8,0)--(2.8,0.2)--(3.2,0.2)--(3.2,0)
(3.6,0)--(3.6,0.2)--(4,0.2)--(4,0)
(0.6,0.2)--(0.6,0.4)--(1.4,0.4)--(1.4,0.2)
(3,0.2)--(3,0.4)--(3.8,0.4)--(3.8,0.2)
(1,0.4)--(1,0.6) (2.2,0.6)--(2.2,0.2)
(1.2,0.8)--(1.2,1)--(3.4,1)--(3.4,0.4)
(2,1)--(2,1.2)
node at (0,-0.2){$0$}
node at (0.4,-0.2){$1$}
node at (0.8,-0.2){$2$}
node at (1.2,-0.2){$3$}
node at (1.6,-0.2){$4$}
node at (2,-0.2){$5$}
node at (2.4,-0.2){$6$}
node at (2.8,-0.2){$7$}
node at (3.2,-0.2){$8$}
node at (3.6,-0.2){$9$}
node at (4,-0.2){$10$}
node at (2,1.3) {\small$\infty$}
node at (2.2,0.4){$\bullet$}
;
\end{tikzpicture}\\[-3.5mm]
\]
$J={\{0,1,2,3,4,5,6\}}:\{0,5,6\}\to\{5,6\}$

%
\[
\begin{tikzpicture}
\draw[densely dotted]
(1.6,0)--(1.6,0.4)--(2.0,0.4)
;
\draw
(0.8,0)--(0.8,0.2)--(1.2,0.2)--(1.2,0)
;
\draw
(1.2,0.6)--(2,0.6)--(2,0.4)
(1.2,0.6)--(1.2,0.8)--(2.4,0.8)
(2.4,0.8)--(2.4,1.0)
(2.0,0.4)--(2.2,0.4)--(2.2,0.2)
(0.6,0.4)--(0.6,0.6)--(1.2,0.6)
(2.4,0.8)--(3.4,0.8)--(3.4,0.4)
(0,0)--(0,0.2)--(0.4,0.2)--(0.4,0)
(0.8,0)--(0.8,0.2)--(1.2,0.2)--(1.2,0)
(0.2,0.2)--(0.2,0.4)--(1,0.4)--(1,0.2)
(2,0)--(2,0.2)--(2.4,0.2)--(2.4,0)
(2.8,0)--(2.8,0.2)--(3.2,0.2)--(3.2,0)
(3.6,0)--(3.6,0.2)--(4,0.2)--(4,0)
(3,0.2)--(3,0.4)--(3.8,0.4)--(3.8,0.2)
node at (0,-0.2){$1$}
node at (0.4,-0.2){$2$}
node at (0.8,-0.2){$3$}
node at (1.2,-0.2){$4$}
node at (1.6,-0.2){$0$}
node at (2,-0.2){$5$}
node at (2.4,-0.2){$6$}
node at (2.8,-0.2){$7$}
node at (3.2,-0.2){$8$}
node at (3.6,-0.2){$9$}
node at (4,-0.2){$10$}
node at (2.4,1.1) {\small$\infty$}
node at (0.8,0.1) {$\bullet$}
node at (2,0.4) {$\bullet$}
node at (1.45,0.1) {\small$\leftarrow$}
;
\draw
(0.2,0.2)--(0.2,0.4)--(1,0.4)
(2,0.4)--(2.2,0.4)--(2.2,0.2)
;
\end{tikzpicture}
\qquad\raisebox{4.5mm}{$\xrightarrow[j=3]{\md_{0,\{j\}}}$}\qquad
\begin{tikzpicture}
\draw[very thick]
(0.8,0)--(0.8,0.2)--(1.2,0.2)--(1.2,0);
\draw
(0,0)--(0,0.2)--(0.4,0.2)--(0.4,0)
(2,0)--(2,0.2)--(2.4,0.2)--(2.4,0)
(2.8,0)--(2.8,0.2)--(3.2,0.2)--(3.2,0)
(3.6,0)--(3.6,0.2)--(4,0.2)--(4,0)
(1,0.2)--(1,0.4)--(1.6,0.4)--(1.6,0)
(3,0.2)--(3,0.4)--(3.8,0.4)--(3.8,0.2)
(0.2,0.2)--(0.2,0.6)--(1.2,0.6)--(1.2,0.4)
(0.8,0.6)--(0.8,0.8)--(2.2,0.8)--(2.2,0.2)
(1.2,0.8)--(1.2,1)--(3.4,1)--(3.4,0.4)
(2,1)--(2,1.2)
node at (0,-0.2){$1$}
node at (0.4,-0.2){$2$}
node at (0.8,-0.2){$0$}
node at (1.2,-0.2){$3$}
node at (1.6,-0.2){$4$}
node at (2,-0.2){$5$}
node at (2.4,-0.2){$6$}
node at (2.8,-0.2){$7$}
node at (3.2,-0.2){$8$}
node at (3.6,-0.2){$9$}
node at (4,-0.2){$10$}
node at (2.2,0.4){$\bullet$}
node at (2,1.3) {\small$\infty$}
;
\end{tikzpicture}\\[-3.5mm]
\]
$j=3: \{3,4\}\to\{0,3,4\},\ \{1,2,3,4\}\to\{0,1,2,3,4\},\ \{0,5,6\}\to\{5,6\},\ 
+\{0,3\}$ 

\[
\begin{tikzpicture}
\draw[densely dotted]
(1.6,0)--(1.6,0.4)--(2.0,0.4)
;
\draw
(0.8,0)--(0.8,0.2)--(1.2,0.2)--(1.2,0)
;
\draw
(1.2,0.6)--(2,0.6)--(2,0.4)
(1.2,0.6)--(1.2,0.8)--(2.4,0.8)
(2.4,0.8)--(2.4,1.05)
(2.0,0.4)--(2.2,0.4)--(2.2,0.2)
(0.6,0.4)--(0.6,0.6)--(1.2,0.6)
(2.4,0.8)--(3.4,0.8)--(3.4,0.4)
(0,0)--(0,0.2)--(0.4,0.2)--(0.4,0)
(0.8,0)--(0.8,0.2)--(1.2,0.2)--(1.2,0)
(0.2,0.2)--(0.2,0.4)--(1,0.4)--(1,0.2)
(2,0)--(2,0.2)--(2.4,0.2)--(2.4,0)
(2.8,0)--(2.8,0.2)--(3.2,0.2)--(3.2,0)
(3.6,0)--(3.6,0.2)--(4,0.2)--(4,0)
(3,0.2)--(3,0.4)--(3.8,0.4)--(3.8,0.2)
node at (0,-0.2){$1$}
node at (0.4,-0.2){$2$}
node at (0.8,-0.2){$3$}
node at (1.2,-0.2){$4$}
node at (1.6,-0.2){$0$}
node at (2,-0.2){$5$}
node at (2.4,-0.2){$6$}
node at (2.8,-0.2){$7$}
node at (3.2,-0.2){$8$}
node at (3.6,-0.2){$9$}
node at (4,-0.2){$10$}
node at (2.4,1.15) {\small$\infty$}
node at (2,0.4) {$\bullet$}
node at (2.4,0.9){$\bullet$}
node at (1.7,0.2) {$\uparrow$}
;
\draw
(0.2,0.2)--(0.2,0.4)--(1,0.4)
(2,0.4)--(2.2,0.4)--(2.2,0.2)
;
\end{tikzpicture}
\qquad\raisebox{4mm}{$\xrightarrow[j=\infty]{\md_{0,L_n}}$}\qquad
\begin{tikzpicture}
\draw[very thick]
(0,0)--(0,1)--(2.2,1)--(2.2,0.8)
;
\draw
(0.4,0)--(0.4,0.2)--(0.8,0.2)--(0.8,0)
(1.2,0)--(1.2,0.2)--(1.6,0.2)--(1.6,0)
(2,0)--(2,0.2)--(2.4,0.2)--(2.4,0)
(2.8,0)--(2.8,0.2)--(3.2,0.2)--(3.2,0)
(3.6,0)--(3.6,0.2)--(4,0.2)--(4,0)
(0.6,0.2)--(0.6,0.4)--(1.4,0.4)--(1.4,0.2)
(3,0.2)--(3,0.4)--(3.8,0.4)--(3.8,0.2)
(1,0.4)--(1,0.6)--(2.2,0.6)--(2.2,0.2)
(1.4,0.6)--(1.4,0.8)--(3.4,0.8)--(3.4,0.4)
(1.4,1)--(1.4,1.2)
node at (0,-0.2){$0$}
node at (0.4,-0.2){$1$}
node at (0.8,-0.2){$2$}
node at (1.2,-0.2){$3$}
node at (1.6,-0.2){$4$}
node at (2,-0.2){$5$}
node at (2.4,-0.2){$6$}
node at (2.8,-0.2){$7$}
node at (3.2,-0.2){$8$}
node at (3.6,-0.2){$9$}
node at (4,-0.2){$10$}
node at (2.2,0.4){$\bullet$}
node at (1.4,1.3) {\small$\infty$}
;
\end{tikzpicture}\\[-3.5mm]
\]
$j=\infty: \{0,5,6\}\!\to\!\{5,6\},\,\{0,1,\ldots,6\}\to\{1,\ldots,6\},\,\{0,1,\dots,10\}\!\to\!\{1,\dots,10\}$ 

\begin{remark} \label{rem:spectra}
Middle convolutions $\mc_{x_0,\mu}$, additions $\Ad\bigl((x_i-x_j)^\lambda \bigr)$ and
permutations of suffices $\{0,1,\dots,n{-}1\}$ define transformations on the space of KZ-type equations.
The change of spectra $\Sp\mathcal M$ is obtained by Theorem~\ref{thm:main}.

We examine the necessary data of $\mathcal M$ 
to get $[A_{12}]$ of a equation which is obtained by
a successive application of these transformation to the original KZ-type equation $\mathcal M$.
Since $[\tilde A_{12}|_{\mathcal K_1}]=[A_{012}|_{\ker A_{01}}]$, we need $[A_{01}:A_{012}]$
considering additions.
Considering more permutations, we need $[A_{12}:A_{123}]$ and 
$[A_{23}:A_{123}]$ in general.
Considering a middle convolution with respect to $x_0$, we moreover need
$[A_{01}:A_{012}:A_{0123}]$ and $[A_{01}:A_{23}:A_{0123}]$.
These considerations correspond to the following diagram.

\noindent
\scalebox{0.91}{\begin{tikzpicture}
\draw
(0,0)--(0,0.2)--(0.4,0.2)--(0.4,0)
(0.2,0.2)--(0.2,0.4)
node at (0,-0.2){$1$}
node at (0.4,-0.2){$2$}
;
\end{tikzpicture}
\raisebox{2mm}{$\to$}
\begin{tikzpicture}
\draw
(0,0)--(0,0.2)--(0.1,0.2) (0.2,0.2)--(0.4,0.2)--(0.4,0)
(0.2,0.2)--(0.2,0.4)--(0.8,0.4)--(0.8,0)
(0.4,0.4)--(0.4,0.6)
node at (0,-0.2){$0$}
node at (0.4,-0.2){$1$}
node at (0.8,-0.2){$2$}
;
\end{tikzpicture}
\raisebox{2mm}{$\to$}
\begin{tikzpicture}
\draw
(0,0)--(0,0.2)--(0.1,0.2) (0.2,0.2)--(0.4,0.2)--(0.4,0)
(0.2,0.2)--(0.2,0.4)--(0.8,0.4)--(0.8,0)
(0.4,0.4)--(0.4,0.6)--(1.2,0.6)--(1.2,0)
(0.6,0.6)--(0.6,0.8)
node at (0,-0.2){$0$}
node at (0.4,-0.2){$1$}
node at (0.8,-0.2){$2$}
node at (1.2,-0.2){$3$}
;
\end{tikzpicture}
\begin{tikzpicture}
\draw
(0,0)--(0,0.2)--(0.4,0.2)--(0.4,0)
(0.8,0)--(0.8,0.2)--(1.0,0.2) (1.1,0.2)--(1.2,0.2)--(1.2,0)
(0.2,0.2)--(0.2,0.4)--(1,0.4)--(1,0.2)
(0.6,0.4)--(0.6,0.6)
node at (0,-0.2){$2$}
node at (0.4,-0.2){$3$}
node at (0.8,-0.2){$1$}
node at (1.2,-0.2){$0$}
;
\end{tikzpicture}
\raisebox{2mm}{$\to$}
\begin{tikzpicture}
\draw
(0,0)--(0,0.2)--(0.1,0.2) (0.2,0.2)--(0.4,0.2)--(0.4,0)
(0.2,0.2)--(0.2,0.4)--(0.8,0.4)--(0.8,0)
(0.4,0.4)--(0.4,0.6)--(1.2,0.6)--(1.2,0)
(0.6,0.6)--(0.6,0.8)--(1.6,0.8)--(1.6,0)
(0.8,0.8)--(0.8,1)
node at (0,-0.2){$0$}
node at (0.4,-0.2){$1$}
node at (0.8,-0.2){$2$}
node at (1.2,-0.2){$3$}
node at (1.6,-0.2){$4$}
;
\end{tikzpicture}
\begin{tikzpicture}
\draw
(0,0)--(0,0.2)--(0.4,0.2)--(0.4,0)
(0.8,0)--(0.8,0.2)--(1.0,0.2) (1.1,0.2)--(1.2,0.2)--(1.2,0)
(0.2,0.2)--(0.2,0.4)--(1,0.4)--(1,0.2)
(0.6,0.4)--(0.6,0.6)--(1.6,0.6)--(1.6,0)
(0.8,0.6)--(0.8,0.8)
node at (0,-0.2){$2$}
node at (0.4,-0.2){$3$}
node at (0.8,-0.2){$1$}
node at (1.2,-0.2){$0$}
node at (1.6,-0.2){$4$}
;
\end{tikzpicture}
\begin{tikzpicture}
\draw
(0,0)--(0,0.2)--(0.4,0.2)--(0.4,0)
(1.2,0)--(1.2,0.2)--(1.4,0.2) (1.5,0.2)--(1.6,0.2)--(1.6,0)
(0.2,0.2)--(0.2,0.4)--(0.8,0.4)--(0.8,0)
(0.4,0.4)--(0.4,0.6)--(1.4,0.6)--(1.4,0.2)
(0.8,0.6)--(0.8,0.8)
node at (0,-0.2){$2$}
node at (0.4,-0.2){$3$}
node at (0.8,-0.2){$4$}
node at (1.2,-0.2){$1$}
node at (1.6,-0.2){$0$}
;
\end{tikzpicture}}

These changes of patterns correspond to basic insertions.
Since any tournament is obtained by a successive application of basic insertions 
(cf.~Remark~\ref{rem:basic-add}), we need $\Sp\mathcal M$ in general.
 
On the other hand, convolutions, additions and permutations of suffixes define transformation
on the space $\{[A_I]\mid I \subset L_n\}$ which does not contain simultaneous eigenspace 
decompositions.
To get the eigenspace decomposition of a residue matrix or a commuting family of residue matrices
of the equation obtained by applying middle convolutions and additions to an original KZ-type 
equation $\mathcal M$, the necessary data contained in $\Sp\mathcal M$ is depend on the procedure 
of the application, for example, whether $\ker A_j$ is zero or not.
It may be good to check the necessary data for the real calculation, 
simultaneous eigenspace decompositions of families of commuting residue matrices of 
$\mathcal M$
For example, we refer to \cite[Theorem~4.1]{Okz} or \S\ref{sec:fixedpt} when the middle 
convolutions are restricted only on some variables.
\end{remark}

\section{Examples}
\label{sec:Example}
The transformation of $\Sp\mathcal M$ by a middle convolution of $\mathcal M$
is obtained by Theorem~\ref{thm:main}.
Since the transformation is symmetric with respect to the suffices $\{1,\dots,n{-}1\}$
of the variables $(x_0,\dots,x_{n-1})$, we have only to examine the transformation
of the maximal commuting families of residue matrices corresponding to the representatives 
of win types.  In the cases $n=3,\,4,\,5,\,6,\,7,\ldots$, the number of the win types are
%
$W_n=2,\,4,\,9,\,20,\,46,\ldots$, respectively.

In this section, we assume $n=4$ and examine $W_4=4$ cases. 
The results are kept valid by permutations of suffixes $\{1,2,3\}$. 
Note that the total number of maximal commuting families of residue matrices equals 
$K_4=15$.
For simplicity, we assume $\mathcal M$ is homogeneous.
Then $A_{0123}=0$ and $\tilde A_{0123}=\mu$. 

\medskip
{\bf 1}. \ 
\raisebox{-2mm}{\begin{tikzpicture}
\draw
(0,0)--(0,0.2)--(0.2,0.2) (0.3,0.2)--(0.4,0.2)--(0.4,0)
(0.2,0.2)--(0.2,0.4)--(0.4,0.4) (0.5,0.4)--(0.8,0.4)--(0.8,0)
(0.4,0.4)--(0.4,0.6)--(0.6,0.6) (0.7,0.6)--(1.2,0.6)--(1.2,0)
(0.6,0.6)--(0.6,0.8)
node at (0,-0.2){$0$}
node at (0.4,-0.2){$1$}
node at (0.8,-0.2){$2$}
node at (1.2,-0.2){$3$}
;
\end{tikzpicture}
}\qquad
$
\mathcal I=\bigl\{\{0,1\},\{0,1,2\},\{0,1,2,3\}\bigr\}\ 
\xrightarrow{b^0}\ \bigl\{\{1\},\{2\},\{3\}\bigr\}
$
\begin{align*}
U&=\begin{pmatrix}
 1 & 0 & 0\\
 0 & 1 & 0\\
 0 & 0 & 1
\end{pmatrix},\quad V=U^{-1}=\begin{pmatrix}
 1 & 0 & 0\\
 0 & 1 & 0\\
 0 & 0 & 1
\end{pmatrix},\quad \tilde A_* \to V\tilde A_*U\allowdisplaybreaks\\
\tilde A_{01}&=\begin{pmatrix}
  A_{01}{+}\mu & A_{02} & A_{03}\\
 0 & 0 & 0\\
 0 & 0 & 0
\end{pmatrix}\to \begin{pmatrix}
  A_{01}{+}\mu & A_{02} & A_{03}\\
 0 & 0 & 0\\
 0 & 0 & 0
\end{pmatrix}\allowdisplaybreaks\\
\tilde A_{012}&=\begin{pmatrix}
  A_{012}{+}\mu & 0 & A_{03}\\
 0 &  A_{012}{+}\mu & A_{03}\\
 0 & 0 & A_{12}
\end{pmatrix}\to \begin{pmatrix}
  A_{012}{+}\mu & 0 & A_{03}\\
 0 &  A_{012}{+}\mu & A_{03}\\
 0 & 0 & A_{12}
\end{pmatrix}\allowdisplaybreaks\\
[\tilde A_{01}:\tilde A_{012}]&=\{[A_{01}+\mu: A_{012}+\mu],[0: A_{012}+\mu],[0:A_{12}]\}\\
[\tilde A_{01}:\tilde A_{012}]|_{\mathcal K_1}&=[A_{01}+\mu:A_{012}+\mu]|_{\mathrm{ker}\,A_{01}}\\
[\tilde A_{01}:\tilde A_{012}]|_{\mathcal K_2}&=[0:A_{012}+\mu]|_{\mathrm{ker}\,A_{02}}\\
[\tilde A_{01}:\tilde A_{012}]|_{\mathcal K_3}&=[0:A_{12}]|_{\mathrm{ker}\,A_{03}}\\
[\tilde A_{01}:\tilde A_{012}]|_{\mathcal K_\infty}&=[0:A_{12}]|_{\mathrm{ker}\,(A_{0\infty}-\mu)}%
\end{align*}
Here $\tilde A_{ij}$, $U$ and $V$ are block matrices.

\medskip
{\bf 2}. \ 
\raisebox{-2mm}{\begin{tikzpicture}
\draw
(0,0)--(0,0.2)--(0.1,0.2) (0.2,0.2)--(0.4,0.2)--(0.4,0)
(0.2,0.2)--(0.2,0.4)--(0.3,0.4) (0.4,0.4)--(0.8,0.4)--(0.8,0)
(0.4,0.4)--(0.4,0.6)--(0.6,0.6) (0.7,0.6)--(1.2,0.6)--(1.2,0)
(0.6,0.6)--(0.6,0.8)
node at (0,-0.2){$1$}
node at (0.4,-0.2){$2$}
node at (0.8,-0.2){$0$}
node at (1.2,-0.2){$3$}
;
\end{tikzpicture}
}\qquad
$
\mathcal I=\bigl\{\{0,1,2\},\{1,2\},\{0,1,2,3\}\bigr\}\ \xrightarrow{b^0}\ 
\bigl\{\{1,2\},\{1\},\{3\}\bigr\}
$
\begin{align*}
U&=\begin{pmatrix}
 1 & 1 & 0\\
 1 & 0 & 0\\
 0 & 0 & 1
\end{pmatrix},\quad V=U^{-1}=\begin{pmatrix}
 0 & 1 & 0\\
 1 & -1 & 0\\
 0 & 0 & 1
\end{pmatrix},\quad \tilde A_* \to V\tilde A_*U\allowdisplaybreaks\\
\tilde A_{012}&=\begin{pmatrix}
  A_{012}{+}\mu & 0 & A_{03}\\
 0 &  A_{012}{+}\mu & A_{03}\\
 0 & 0 & A_{12}
\end{pmatrix}\to \begin{pmatrix}
  A_{012}{+}\mu & 0 & A_{03}\\
 0 &  A_{012}{+}\mu & 0\\
 0 & 0 & A_{12}
\end{pmatrix}\allowdisplaybreaks\\
\tilde A_{12}&=\begin{pmatrix}
 A_{012}{-}A_{01} & -A_{02} & 0\\
 -A_{01} & A_{012}{-}A_{02} & 0\\
 0 & 0 & A_{12}
\end{pmatrix}\to \begin{pmatrix}
 A_{12} & -A_{01} & 0\\
 0 & A_{012} & 0\\
 0 & 0 & A_{12}
\end{pmatrix}\allowdisplaybreaks\\
[\tilde A_{012}:\tilde A_{12}]&=\{[A_{012}+\mu:A_{12}],[A_{012}+\mu:A_{012}],[A_{12}:A_{12}]\}\\
[\tilde A_{012}:\tilde A_{12}]|_{\mathcal K_1}&=[A_{012}+\mu:A_{012}]|_{\mathrm{ker}\,A_{01}}\\
[\tilde A_{012}:\tilde A_{12}]|_{\mathcal K_2}&=[A_{012}+\mu:A_{012}]|_{\mathrm{ker}\,A_{02}}\\
[\tilde A_{012}:\tilde A_{12}]|_{\mathcal K_3}&=[A_{12}:A_{12}]|_{\mathrm{ker}\,A_{03}}\\
[\tilde A_{012}:\tilde A_{12}]|_{\mathcal K_\infty}&=[A_{12}:A_{12}]|_{\mathrm{ker}\,(A_{0\infty}-\mu)}%
\end{align*}

\begin{tabular}{|c|c|c|c|}
\multicolumn{4}{c}{$A_I^J$ (cf.~Theorem \ref{thm:main})}
\\[1mm]\hline
$J\backslash I$\rule{0mm}{4mm}&$\widetilde{012}$&$\widetilde{12}$&$\widetilde{0123}$\\ \hline
012 & 012+$\mu$&12&$\mu$\\ \hline
12  & 012+$\mu$&012&$\mu$\\ \hline
0123& 12       &12&$\mu$\\ \hline
\end{tabular}\qquad
\begin{tabular}{|c|c|c|c|}\hline
$j\backslash I$\rule{0mm}{4mm}&$\widetilde{012}$&$\widetilde{12}$&$\widetilde{0123}$\\ \hline
1    & 012+$\mu$&012&$\mu$\\ \hline
2    & 012+$\mu$&012&$\mu$\\ \hline
3    & 12  &12&$\mu$\\ \hline
$\infty$& 12&12&$\mu$\\ \hline
\end{tabular}
\begin{remark}\label{rem:main}
The simultaneous eigenspace decomposition of $\bigl(\tilde A_I\bigr)_{I\in\mathcal I}$ is
obtained by $A_I^J$ in Theorem~\ref{thm:main}.
The above left table is the $(n{-}1)\times(n{-}1)$ matrix 
whose $(J,I)$-element with $I\in\mathcal I$ and $J\in\mathcal I$ is the suffix $K$ of $A_K=A_I^J$.
Moreover $012+\mu$ in the table means $A_{012}+\mu$. 
Then $K$ contains $0$ if and only if $I\supset J$ and the term ``$+\mu$" exists
if and only if $0\in I\supset J$.
Here the last column corresponding to $\tilde A_{0123}$ is omitted.
Similarly, the above right table shows $A_I^{\{j\}}$ and $A_I^{L_n}$
which describe $\bigl(\tilde A_I|_{\mathcal K_j}\bigr)_{I\in\mathcal I}$
and $\bigl(\tilde A_I|_{\mathcal K_\infty}\bigr)_{I\in\mathcal I}$, respectively.
\end{remark}

\medskip
{\bf 3}. \ 
\raisebox{-2mm}{\begin{tikzpicture}
\draw
(0,0)--(0,0.2)--(0.1,0.2) (0.2,0.2)--(0.4,0.2)--(0.4,0)
(0.2,0.2)--(0.2,0.4)--(0.3,0.4) (0.4,0.4)--(0.8,0.4)--(0.8,0)
(0.4,0.4)--(0.4,0.6)--(0.5,0.6) (0.6,0.6)--(1.2,0.6)--(1.2,0)
(0.6,0.6)--(0.6,0.8)
node at (0,-0.2){$1$}
node at (0.4,-0.2){$2$}
node at (0.8,-0.2){$3$}
node at (1.2,-0.2){$0$}
;
\end{tikzpicture}
}\quad
$
\mathcal I=\bigl\{\{0,1,2,3\},\{1,2,3\},\{1,2\}\bigr\}\ \xrightarrow{b^0}\ 
\bigl\{\{1,2,3\},\{1,2\},\{1\}\bigr\}
$
\begin{align*}
U&=\begin{pmatrix}
 1 & 1 & 1\\
 1 & 1 & 0\\
 1 & 0 & 0
\end{pmatrix},\quad V=U^{-1}=\begin{pmatrix}
 0 & 0 & 1\\
 0 & 1 & -1\\
 1 & -1 & 0
\end{pmatrix},\quad \tilde A_* \to V\tilde A_*U\allowdisplaybreaks\\
\tilde A_{123}&=\begin{pmatrix}
 -A_{01} & -A_{02} & -A_{03}\\
 -A_{01} & -A_{02} & -A_{03}\\
 -A_{01} & -A_{02} & -A_{03}
\end{pmatrix}\to \begin{pmatrix}
 A_{123} & -A_{01}-A_{02} & -A_{01}\\
 0 & 0 & 0\\
 0 & 0 & 0
\end{pmatrix}\allowdisplaybreaks\\
\tilde A_{12}&=\begin{pmatrix}
 A_{012}{-}A_{01} & -A_{02} & 0\\
 -A_{01} & A_{012}{-}A_{02} & 0\\
 0 & 0 & A_{12}
\end{pmatrix}\to \begin{pmatrix}
 A_{12} & 0 & 0\\
 0 & A_{12} & -A_{01}\\
 0 & 0 & A_{012}
\end{pmatrix}\allowdisplaybreaks\\
[\tilde A_{123}:\tilde A_{12}]&=\{[A_{123}:A_{12}],[0:A_{12}],[0:A_{012}]\}\\
[\tilde A_{123}:\tilde A_{12}]|_{\mathcal K_1}&=[0:A_{012}]|_{\mathrm{ker}\,A_{01}}\\
[\tilde A_{123}:\tilde A_{12}]|_{\mathcal K_2}&=[0:A_{012}]|_{\mathrm{ker}\,A_{02}}\\
[\tilde A_{123}:\tilde A_{12}]|_{\mathcal K_3}&=[0:A_{12}]|_{\mathrm{ker}\,A_{03}}\\
[\tilde A_{123}:\tilde A_{12}]|_{\mathcal K_\infty}&=[A_{123}:A_{12}]|_{\mathrm{ker}\,(A_{0\infty}-\mu)}%
\end{align*}

\medskip
{\bf 4}. \ 
\raisebox{-2mm}{\begin{tikzpicture}
\draw
(0,0)--(0,0.2)--(0.2,0.2) (0.3,0.2)--(0.4,0.2)--(0.4,0)
(0.8,0)--(0.8,0.2)--(0.9,0.2) (1,0.2)--(1.2,0.2)--(1.2,0)
(0.2,0.2)--(0.2,0.4)--(0.6,0.4) (0.7,0.4)--(1,0.4)--(1,0.2)
(0.6,0.4)--(0.6,0.6)
node at (0,-0.2){$0$}
node at (0.4,-0.2){$1$}
node at (0.8,-0.2){$2$}
node at (1.2,-0.2){$3$}
;
\end{tikzpicture}
}\qquad
$\mathcal I=\bigl\{\{0,1\},\{0,1,2,3\},\{2,3\}\bigr\}\ \xrightarrow{b^0}\ 
\bigl\{\{1\},\{2,3\},\{2\}\bigr\}$
\begin{align*}
U&=\begin{pmatrix}
 1 & 0 & 0\\
 0 & 1 & 1\\
 0 & 1 & 0
\end{pmatrix},\quad V=U^{-1}=\begin{pmatrix}
 1 & 0 & 0\\
 0 & 0 & 1\\
 0 & 1 & -1
\end{pmatrix},\quad \tilde A_* \to V\tilde A_*U\allowdisplaybreaks\\
\tilde A_{01}&=\begin{pmatrix}
  A_{01}{+}\mu & A_{02} & A_{03}\\
 0 & 0 & 0\\
 0 & 0 & 0
\end{pmatrix}\to \begin{pmatrix}
  A_{01}{+}\mu & A_{03}+A_{02} & A_{02}\\
 0 & 0 & 0\\
 0 & 0 & 0
\end{pmatrix}\allowdisplaybreaks\\
\tilde A_{23}&=\begin{pmatrix}
 A_{23} & 0 & 0\\
 0 & A_{023}{-}A_{02} & -A_{03}\\
 0 & -A_{02} & A_{023}{-}A_{03}
\end{pmatrix}\to \begin{pmatrix}
 A_{23} & 0 & 0\\
 0 & A_{23} & -A_{02}\\
 0 & 0 & A_{023}
\end{pmatrix}\allowdisplaybreaks\\
[\tilde A_{01}:\tilde A_{23}]&=\{[\!A_{01}+\mu:A_{23}],[0:A_{23}],[0:A_{023}]\}
\allowdisplaybreaks\\
[\tilde A_{01}:\tilde A_{23}]|_{\mathcal K_1}&=[A_{01}+\mu:A_{23}]|_{\mathrm{ker}\,A_{01}}
\allowdisplaybreaks\\
[\tilde A_{01}:\tilde A_{23}]|_{\mathcal K_2}&=[0:A_{023}]|_{\mathrm{ker}\,A_{02}}
\allowdisplaybreaks\\
[\tilde A_{01}:\tilde A_{23}]|_{\mathcal K_3}&=[0:A_{023}]|_{\mathrm{ker}\,A_{03}}
\allowdisplaybreaks\\
[\tilde A_{01}:\tilde A_{23}]|_{\mathcal K_\infty}&=[0:A_{23}]|_{\mathrm{ker}\,(A_{0\infty}-\mu)}%
\end{align*}

\begin{tabular}{|c|c|c|}
\multicolumn{3}{c}{Case {\bf 1}}
\\\hline
$\widetilde A$\rule{0mm}{4mm} &$\widetilde{01}$&$\widetilde{012}$\\ \hline
01&01+$\mu$&012+$\mu$\\ \hline
012&0&012+$\mu$\\ \hline
0123&0&12\\ \hline
1&01+$\mu$&012+$\mu$\\ \hline
2&0&012+$\mu$\\ \hline
3&0&12\\ \hline
$\infty$&0&12\\ \hline
\end{tabular}
\quad
\begin{tabular}{|c|c|c|}
\multicolumn{3}{c}{Case {\bf 3}}
\\\hline
$\widetilde A$\rule{0mm}{4mm}&$\widetilde{123}$&$\widetilde{12}$\\ \hline
0123&123&12\\ \hline
123&0&12\\ \hline
12&0&012\\ \hline
1&0&012\\ \hline
2&0&012\\ \hline
3&0&12\\ \hline
$\infty$&123&12\\ \hline
\end{tabular}
\quad
\begin{tabular}{|c|c|c|}
\multicolumn{3}{c}{Case {\bf 4}}
\\\hline
$\widetilde A$\rule{0mm}{4mm} &$\widetilde{01}$&$\widetilde{23}$\\ \hline
01&01+$\mu$&23\\ \hline
0123&0&23\\ \hline
23&0&023\\ \hline
1&01+$\mu$&23\\ \hline
2&0&023\\ \hline
3&0&023\\ \hline
$\infty$&0&23\\ \hline
\end{tabular}

\medskip
We show a computer program displaying the result in this section which uses functions
in a library \cite{Or} of the computer algebra \texttt{Risa/Asir}. 
Then the result including the figures of tournaments is displayed through a PDF file created by \TeX.

\medskip
\begin{verbatim}
 N=4;    /* N-2=2 variables HG */
 T=os_md.symtournament(N|to="T");        /* T: types */
 for(S="";T!=[];T=cdr(T)){               /* R: win types */
   R=os_md.xytournament(car(T),0|verb=21,winner="all");
   for(;R!=[];R=cdr(R)){
     C=car(R);
     S=S+"\\raisebox{-2mm}{"             /* S: source of TeX */
     +os_md.xytournament(C[0],0|teams=C[1],winner=0) /* figure */
     +"}\\qquad"+rtostr(C[4])+"\\ $\\to$ \\ "+rtostr(C[3])
     +os_md.midKZ(C[2],C[3]);            /* spectra */
   }
 }
 os_md.dviout(S);                        /* display */
\end{verbatim}
\medskip

In the above program
\begin{itemize}
\item
In the first line, the number of variables $n=4$ is given by \texttt{N=4}.
\item
In the 2-nd line, all the types \texttt{T} for \texttt{N} teams are obtained.
\item
In the 4-th line, all win types $\mathcal I$ are obtained in \texttt{R}.
\item
From 5-th line, $\Sp(\mc_{x_0,\mu}\mathcal M)$ and the findings as presented in this section 
are transformed into a source text \texttt{S} in \TeX\ and in the last line it is displayed
using a PDF file transformed from the source text.
\end{itemize}
\begin{remark}
The top insertion imbeds the tournaments of $n{-}1$ teams in those of $n$ teams.
The image of this imbedding is the tournaments of $n$ teams with $b^0(L_n)=\{n{-}1\}$.
This corresponds to the KZ-type equation $\mathcal M$ with $A_{i,n{-}1}=0$ \ $(0\le i\le n-2)$.
Hence our result of KZ-type equations with $n{-}1$ variables follows from that of KZ-type equations with 
$n$ variables. 

The first two examples in this section correspond to this imbedding  
and the results for $n=3$ are obtained by omitting  $b^0(\{0,1,2,3\})=\{3\}$. 
Namely, we get them by the first $2\times 2$ blocks of the matrices in these examples.
Moreover we omit the term $\mathcal K_3$ and the last terms of the simultaneous eigenspace 
decompositions. The term $A_{012}$ can remain.
\end{remark}


\section{Further considerations}
\label{sec:further}
\subsection{Infinite point}
\label{sec:infinity}
KZ-type equation $\mathcal M$ in \S\ref{sec:KZ} is considered to be defined on the configuration
space of $n{+}1$ points of $\mathbb P^1$.
By a linear fractional transformation transforming the infinite point to a finite point, 
we have a KZ-type equation with $n{+}1$ variables which has no singularity at infinite point.
Then all the singular points are finite points and it may be easier to understand a symmetry
among singular points.
If the original equation has $n{-}1$ variables, 
the resulting KZ-type equation has $n$ variables and is characterized by the condition
\begin{equation}\label{eq:regularinf}
 A_{i\infty}:=\sum_{\nu=0}^{n-1}A_{i\nu}=0\quad(0\le i<n)
\end{equation}
on the residue matrices $A_{ij}$.
Hence we assume the following.
\begin{definition}
$\infty$ is a {\bf pseudo-singular point}, namely,
there exist $\mu_i\in\mathbb C$ such that
\begin{equation}\label{eq:quasiregularinf}
 A_{i\infty}=\mu_i\quad(0\le i<n)
.
\end{equation}
Here $\mu_i$ mean scalar matrices.
\end{definition}

If we apply 
$\Ad\bigl((x_0-x_1)^\lambda(x_0-x_2)^{\lambda}(x_1-x_2)^{-\lambda}\bigr)$ to
$\mathcal M$,
$A_{0\infty}$ is changed into $A_{0\infty}-\lambda$ and $A_{i\infty}$ for $i\ne0$ are unchanged.
Hence the KZ-tye equation with a pseudo-singular infinite point
can be changed to a equation satisfying \eqref{eq:regularinf}.
We examine the middle convolution of $\tilde A_{i\infty}$.
Note that
\begin{align*}
\tilde A_{0\infty}&=-\sum_{\nu=1}^{n-1}A_{0\nu}
   =\begin{pmatrix}
    -A_{01}-\mu&-A_{02}&\cdots&-A_{0,n-1}\\
    -A_{01}&-A_{02}-\mu&\cdots&-A_{0,n-1}\\[-1.5mm]
    \vdots&\vdots&\ddots&\vdots\\
    -A_{01}&-A_{02}&\cdots&-A_{0,n-1}-\mu
     \end{pmatrix},\allowdisplaybreaks\\
\tilde A_{1\infty}&=-\sum_{\nu=0}^{n-1}A_{1\nu}
   =\begin{pmatrix}
    A_{0\infty}+A_{1\infty}+A_{01}-\mu & 0&\cdots&0\\
    A_{01}&A_{1\infty}&\cdots&0\\[-1.5mm]
    \vdots&\vdots&\ddots&\vdots\\
    A_{01}&0&\cdots&A_{1\infty}
     \end{pmatrix}
.
\end{align*}
Hence for the variable $x_0$, 
the middle convolution of the KZ-type equation $\mathcal M$
satisfying \eqref{eq:quasiregularinf} 
is defined by $\mc_{x_0,\mu_0}$. 
Then
\begin{align*}
  \widetilde A_{0\infty}(v)_{L_n^0}&=\bigl((A_{0\infty}-\mu)v\bigr)_{L_n^0},&
  \widetilde A_{1\infty}(v)_{L_n^0}&=\bigl((A_{01}+A_{1\infty})v\bigr)_{L_n^0},\\
  \widetilde A_{0\infty}&=0\mod \mathcal K_\infty,&
  \widetilde A_{i\infty}&=\mu_i\mod \mathcal K_i\quad(1\le i<n),
\end{align*}
and therefore
\begin{align}\label{eq:mcquasi}
  \mathcal K_\infty=V_{L_n^0},\ \ {\overline A}_{0\infty}=0,\ \ 
  {\overline A}_{i\infty}=\mu_i\quad(0<i<n)
\end{align}
and $\mc_{x_0,\mu_0}\mathcal M$ also has a pseudo-singular infinite point.

\subsection
{Fixed singular points}
\label{sec:fixedpt}
We examine the KZ-type equation
\begin{align}\label{eq:relKZ}
\mathcal M : 
 \frac{\p u}{\p x_i}
  &=\sum_{\substack{0\le\nu\le n-1\\ \nu\ne i}}
    \frac{A_{i\nu}}{x_i-x_\nu}u+\sum_{q=1}^m\frac{B_{iq}}{x_i-y_q}u
 \quad(i=0,\dots,n-1)
\end{align}
which have fixed singular points $y_1,\ldots,y_m$ together with $x_i=x_j$.

We may assume $\mathcal M$ has a pseudo-singular infinite point without loss of generality.

For $\{i_1,\dots,i_p\}\subset \{0,\dots,n-1\}$ and $\{j_1,\dots,j_q\}\subset \{n,\dots,m+n-1\}$,
put
\begin{align*}
A_{i_1,\dots,i_p;j_1,\dots,j_q}&:=
  \sum_{1\le \nu<\nu'\le p}A_{i_\nu i_{\nu'}} 
   + \sum_{\substack{1\le \nu \le p\\ 1\le \nu'\le q}}B_{i_\nu j_{\nu'}-n+1}
.
\end{align*}
We may think that we put $y_j=x_{n-1+j}$ for $j=1,\dots,m$.  Note that
\begin{align*}
   A_{i_1,\dots,i_p;j_1,\dots,j_q}=A_{i_1,\dots,i_p,j_1,\ldots,j_q}-A_{j_1,\ldots,j_q}
.
\end{align*}
Here the terms $A_{j_\nu j_{\nu'}}$ in the above right hand side are cancelled.

The integrability condition of $\mathcal M$ is
\begin{equation}
 \begin{split}
 [A_{ij},A_{k\ell}]&=[A_{i;q},A_{j;q'}]=[A_{ij},A_{k;q}]=0
,\\
 [A_{ij},A_{ijk}]&=[A_{ij},A_{ij;q}]=[A_{i;q},A_{ij;q}]=0.
 \end{split}
\end{equation}
Here $i,\,j,\,k,\,\ell\in L_n$ and $q,\,q'\in\{n,n{+}1,\ldots,n{+}m{}1\}$ are distinct numbers.
Since $[A_{01},A_{01\cdots k;q}]=[A_{01},\sum\limits_{0\le i<j\le k}A_{i,j}+A_{01;q}+\sum\limits_{i=2}^k A_{i;q}]=0$ etc., we have
\begin{equation}
 \begin{split}
 [A_I,A_J]&=0\quad(I\cap J=\emptyset\text{ or }I\subset J\text{ or }I\supset J),\\
 [A_{I},A_{J;q}]&=0\quad(I\cap J=\emptyset\text{ or }I\subset J),\\ 
 [A_{I;q},A_{J;q'}]&=0\quad(I\cap J=\emptyset\text{ and }q\ne q')
 \end{split}
\end{equation}
for $I,\,J\subset\{0,1,\dots,n-1\}$ and $\{q,q'\}\subset\{n,n+1,\dots,n+m-1\}$.

Hereafter in \S\ref{sec:fixedpt}, we assume \eqref{eq:quasiregularinf}.
\begin{definition}
For a pair of finite sets $(L,L')$ with $L\cap L'=\emptyset$, 
a maximal commuting $\mathcal I$ of $(L,L')$ is defined as follows.
Namely, $\mathcal I=\{\mathcal I^{(\nu)}\mid \nu\in L'\}$, 
\begin{equation}
  L_n=\bigsqcup_{j\in L'} S_j
\end{equation}
and $\mathcal I^{(j)}$ are maximal commuting family of $S_j\cup\{j\}$, respectively.

Moreover putting $L'=\{r_1,\dots,r_m\}$, we define
\begin{equation}
\widehat{\mathcal I}:=\mathcal I\cup \bigcup_{j=2}^m\{\widehat S_j\},\  
  \widehat S_j:=\bigcup_{\nu=1}^j\bigl(S_\nu\cup\{\nu\}\bigr).
\end{equation}
\end{definition}
Here $n$ are numbers of the variables and the fixed points, respectively, and
$n=|S_1|+\cdots+|S_m|$.

We can consider the middle convolution of $\mathcal M$ with respect to any one of the
variables $x_0,\ldots,x_{n-1}$ and the resulting change of the eigenspace decompositions
of residue matrices are obtained from Theorem~\ref{thm:main}.
We consider residue matrices in \eqref{eq:mcquasi} and those corresponding to $\mathcal I$
and define the base of the matrices by $\widehat{\mathcal I}$.
In particular, if $n=1$, this coincides with the result given in \cite{DR}. 

\begin{remark}
The tournament corresponds to $\widehat{\mathcal I}$ is characterized as follows.
The team $(n+m-j)$ and the team $(n+m-1-j)$ will face in a match
$j{-}1$ games before the final if they have won the preceding matches 
($j=1,\dots,m$).
\end{remark}

\begin{exmp}
The representatives of the maximal commuting family of $(L,L')=(\{0,1,2\},\{3,4,5\})$ 
under the permutations of the elements of $L$ and those of $L'$ are
\begin{align*}
1+1+1&:\bigl\{\{0,3\},\{1,4\},\{2,5\}\bigr\},\\
2+1+0&:\bigl\{\{0,3\},\{1,4\},\{0,2,3\}\bigr\},\ 
\bigl\{\{0,1\},\{2,3\},\{0,1,4\}\bigr\},\\
3+0+0&:\bigl\{\{0,1\},\{2,3\},\{0,1,2,3\}\bigr\},\ 
\bigl\{\{0,1\},\{0,1,2\},\{0,1,2,3\}\bigr\},\\
&\quad\bigl\{\{0,1\},\{0,1,3\},\{0,1,2,3\}\bigr\},\ 
\bigl\{\{0,3\},\{0,1,3\},\{0,1,2,3\}\bigr\}.
\end{align*}
When $S_3=\{0,1,2\}$ and  $S_4=S_5=\emptyset$, we have 4 families indicated by 
$3+0+0\ (=|S_1|+|S_2|+|S_3|)$ in the above because $W_{3+1}=4$.
The numbers of maximal commuting families are $3K_4=45$, 
$3!\cdot3K_3=54$, $6$ according to the cases $3+0+0$, $2+1+0$, $1+1+1$, respectively.
The total number of them equals 105.

%
\begin{tikzpicture}
\draw[densely dotted]
(1,0.6)--(1,0.8)
(0.2,0.4)--(1,0.4)
(0.6,0.4)--(0.6,0.6)--(1.8,0.6)
;
\draw
(0,0)--(0,0.2)--(0.4,0.2)--(0.4,0)
(0.8,0)--(0.8,0.2)--(1.2,0.2)--(1.2,0)
(1.6,0)--(1.6,0.2)--(2,0.2)--(2,0)
(0.2,0.2)--(0.2,0.4) (1,0.4)--(1,0.2)
(1.8,0.6)--(1.8,0.2)
node at (0,-0.2){$0$}
node at (0.4,-0.2){$\mathbf 3$}
node at (0.8,-0.2){$1$}
node at (1.2,-0.2){$\mathbf 4$}
node at (1.6,-0.2){$2$}
node at (2,-0.2){$\mathbf 5$}
node at (1,-0.6) {\small$\bigl\{\{0,3\},\{1,4\},\{2,5\}\bigr\}$}
;
\end{tikzpicture}
\quad
\begin{tikzpicture}
\draw[densely dotted]
(0.4,0.6)--(1.4,0.6)
(0.8,0.6)--(0.8,0.8)--(2,0.8)
(1,0.8)--(1,1)
;
\draw
(0,0)--(0,0.2)--(0.4,0.2)--(0.4,0)
(1.2,0)--(1.2,0.2)--(1.6,0.2)--(1.6,0)
(0.2,0.2)--(0.2,0.4)--(0.8,0.4)--(0.8,0)
(0.4,0.4)--(0.4,0.6)  (1.4,0.6)--(1.4,0.2)
 (2,0.8)--(2,0)
node at (0,-0.2){$0$}
node at (0.4,-0.2){$\mathbf 3$}
node at (0.8,-0.2){$2$}
node at (1.2,-0.2){$1$}
node at (1.6,-0.2){$\mathbf 4$}
node at (2,-0.2){$\mathbf 5$}
node at (1,-0.6) {\small$\bigl\{\{0,3\},\{1,4\},\{0,2,3\}\bigr\}$}
;
\end{tikzpicture}
\quad
\begin{tikzpicture}
\draw[densely dotted]
(0.6,0.6)--(1.6,0.6)
(0.8,0.6)--(0.8,0.8)--(2,0.8)
(1,0.8)--(1,1)
;
\draw
(0,0)--(0,0.2)--(0.4,0.2)--(0.4,0)
(0.8,0)--(0.8,0.2)--(1.2,0.2)--(1.2,0)
(0.2,0.2)--(0.2,0.4)--(1,0.4)--(1,0.2)
(0.6,0.4)--(0.6,0.6)  (1.6,0.6)--(1.6,0)
(2,0.8)--(2,0)
node at (0,-0.2){$0$}
node at (0.4,-0.2){$1$}
node at (0.8,-0.2){$2$}
node at (1.2,-0.2){$\mathbf 3$}
node at (1.6,-0.2){$\mathbf 4$}
node at (2,-0.2){$\mathbf 5$}
node at (1,-0.6) {\small$\bigl\{\{0,1\},\{2,3\},\{0,1,2,3\}\bigr\}$}
;
\end{tikzpicture}

\end{exmp}
\begin{exmp}\label{ex:ODE}
Suppose $n=1$.
Then there are $m$ maximal commuting families of $(L,L')$.
For example, when $m=4$, we have\\[-2mm]
\raisebox{2mm}{$\bigl\{\mathcal I\bigr\}=\bigl\{\bigl\{\{0,1\}\bigr\},\bigl\{\{0,2\}\bigr\},
  \bigl\{\{0,3\}\bigr\},\bigl\{\{0,4\}\bigr\}\bigr\}$,\quad
$\widehat{\mathcal I}=\widehat{\{\{0,3\}\}}\ :$}
\begin{tikzpicture}
\draw[densely dotted]
(0,0.2)--(0.4,0.2)
(0.2,0.4)--(1,0.4)
(0.6,0.6)--(1.6,0.6);
\draw
(0.8,0.2)--(1.2,0.2)
(0,0)--(0,0.2)
(0.4,0.2)--(0.4,0)
(0.8,0)--(0.8,0.2)
(1.2,0.2)--(1.2,0)
(0.2,0.2)--(0.2,0.4)
(1,0.4)--(1,0.2)
(0.6,0.4)--(0.6,0.6)
(0.6,0.4)--(0.6,0.6)
(1.6,0.6)--(1.6,0)
(1.1,0.6)--(1.1,0.8)
node at (0,-0.2){$\mathbf 1$}
node at (0.4,-0.2){$\mathbf 2$}
node at (0.8,-0.2){$0$}
node at (1.2,-0.2){$\mathbf 3$}
node at (1.6,-0.2){$\mathbf 4$}
;
\end{tikzpicture}
\end{exmp}
\begin{remark}
Suppose $m=1$.
The integrability condition of the KZ-type equation $\mathcal M$ equals to 
that of the extended KZ-type equation with $n{+}1$ variables 
which are imposed on residue matrices  $A_{ij}$ ($0\le i<j\le n$). 
Hence the KZ-type equation adding the partial derivation with respect to the variable 
$x_n=y_1$ satisfies the integrability condition. On the other hand, the product of
a solution to the extended equation and any function of $x_n$ satisfies the original equation.
Hence the dimension of the solutions to the original equation with variables $(x_0,\dots,x_n)$ 
is infinite. 

\end{remark}

\subsection
{Spectra and Accessory parameters}
\label{sec:accessory}
Suppose $n=1$ and $m=2$. Then we have the Fuchsian ststem
\begin{equation}\label{eq:ODE3}
  \mathcal N: \frac{du}{dx_0}=\frac{A_{01}}{x_0-x_1}u+\frac{A_{02}}{x_0-x_2}u
\end{equation}
with three singular points $x_1$, $x_2$ and $\infty$.  Putting
\begin{equation}\label{eq:KZ3}
 A_{12}=-A_{01}-A_{02}
,
\end{equation}
namely, $A_{12}=A_{0,\infty}$, the equation $\mathcal N$ is extended to
the KZ-type equation $\mathcal M$ with the three variables $(x_0,x_1,x_2)$.
Conversely, we may assume \eqref{eq:KZ3} for 
the irreducible KZ-type equation with $n=3$ (cf.~Definition~\ref{def:homog}). 

In general, $\mathcal N$ is not necessarily rigid and has $2r$ accessory parameters 
($r=0,1,\ldots$). 
For example, $A_{01}$ and  $A_{02}$ are generic matrices in $M(3,\mathbb C)$, 
$\mathcal N$ has 2 accessory parameters.
Since additions and middle convolutions are invertible transformations
which dos not change the number of accessory parameters, we get KZ-type equations with 
$n$ variables which have $2r$ accessory parameters ($n=3,4,\ldots$).
Conversely, we know the number of accessory parameters of a KZ-type equation
if it can be transformed into a KZ-type equation with three variables.

We define that a KZ-type equation $\mathcal M$ is {\bf rigid} if 
the equation is uniquely determined by $\Sp\mathcal M$ with no accessory parameter.
It is an interesting problem to examine that 
a KZ-type equation is transformed into another KZ-type equation
by a successive application of the transformations we have considered.
In particular, the problem is quite interesting if the equations are rigid
and so is the problem determining irreducible KZ-type equations which cannot be 
reduced the rank by any application of these transformations. 

A KZ-type equation $\mathcal M$ obtained by applying these applications to 
the trivial equation $u'=0$ is rigid. 
In this case, owing to symmetries of $\Sp\mathcal M$, we may get several relations between 
the solutions to the equation as in the case of Kummer's relation for 
Gauss hypergeometric functions (cf.~\cite[Remark~5.17]{MO2024}).

In general, for a holonomic system $\mathcal M$, we blow up its singular locus to normal crossing 
singular points and get commuting residue matrices attached to normal crossing divisors 
(cf.~\cite{KO}). 
The {\bf spectra} $\Sp\mathcal M$ is the set of conjugacy classes of commuting residue matrices at 
the normal crossing singular points.  When the commuting matrices are semisimple, the conjugacy 
class are the set of simultaneous eigenvalues and their multiplicities.
\subsection
{Semilocal monodromy}
\label{sec:semilocal}
For example, \cite{Osemilocal} calculates  $[\widetilde A_{03}+\widetilde A_{04}]$ from 
$[A_{0i}]$ and $[A_{03}+A_{04}]$ etc.\ in the case $L=\{0\}$ and $L'=\{1,2,3,4\}$.
Here we examine to calculate $[\widetilde A_{03}+\widetilde A_{04}]$ in the case $L=\{0, 1, 2\}$
and $L'=\{3, 4, 5\}$. 
Formally we have $A_{03}+A_{04}=A_{034}-A_{34}$ and we may calculate as if $A_{34}$ etc.\ 
exist. 

This corresponds to the tournaments such that the teams (3) and (4) play the final game
and so do the teams (4) and (5) if they have won the former games. 
Moreover we may restrict to the tournaments such that teams (3) and (4) play 
a semi-final  or quarter final game if they have won the preceding games.

For $\{0,1,2\}\supset I_1\supset I_2$, 
$\widetilde A_{I_134}-\widetilde A_{I_234}$ is expressed by residue matrices in 
\eqref{eq:relKZ}.
To get the eigenspace decomposition of $\widetilde A_{I_134}-\widetilde A_{I_234}$ from the residue matrices $A_{ij}$ of \eqref{eq:relKZ}, we examine the tournaments containing the games corresponding to $I_134$ and $I_234$ and apply Theorem~\ref{thm:main} to maximal commuting residue matrices
expressed by the tournaments.  Then the simultaneous eigenspace decompositions
containing the residue matrices $A_{I'_134}$ and $A_{I'_234}$ with $I'_1\supset I'_2$
appear but we have only to calculate the eigenspace decompositions containing the 
matrices $A_{I_1'34}-A_{I'_234}$.
In this case, we may have $I'_1=I'_2$ even if $I_1\supsetneqq I_2$.

\subsection{single-elimination tournaments}
The following table shows the relation between KZ-type equations and single-elimination tournaments
discussed in this paper.

\noindent
\scalebox{0.98}{\begin{tabular}{|c|c|}\hline
KZ-type equation with $n$ variables&Tournament of $n$ teams\\\hline\hline
\scalebox{0.9}[1]{Family of maximal commuting residue matrices}&Tournament\\\hline
Spectra of a KZ-type equation&Set of all tournaments\\\hline
Singular points &Semi-final matches \\\hline
Local coordinate for desingularization &
\scalebox{0.9}[1]{Result of a tournament before semi-final}\\\hline
Variable of middle convolution&Winner of tournament\\\hline
Base of upper triangulation of the family&Result of all matches
\\\hline
Middle convolution&Deletion and insertion of the winner\\\hline
Kernels to define middle convolution&Basic/Top insertion of the winner\\\hline
With other $m$ fixed singular points&Divide $n$ teams into $m$ groups\\\hline
\end{tabular}}

\end{document}